\DeclareMathOperator{\Ker}{Ker}
\DeclareMathOperator{\End}{End}
\DeclareMathOperator{\Span}{Span}
\DeclareMathOperator{\Inf}{Inf}
\DeclareMathOperator{\mult}{mult}
\DeclareMathOperator{\supp}{supp}
\theoremstyle{plain}
\newtheorem{Theorem}{Theorem}[section]
\newtheorem{Proposition}[Theorem]{Proposition}
\newtheorem{Corollary}[Theorem]{Corollary}
\newtheorem{Lemma}[Theorem]{Lemma}
\theoremstyle{definition}
\newtheorem{Definition}[Theorem]{Definition}
\newtheorem{Remark}[Theorem]{Remark}
\newtheorem{Example}[Theorem]{Example}
\newtheorem{Construction}[Theorem]{Construction}
\newtheorem{Notation}[Theorem]{Notation}
\newcommand{\arr}{\ar@{>>}}
\newcommand{\dotar}{\ar@{.>}}
\newcommand{\dotarr}{\ar@{.>>}}
\newcommand{\id}{\mathrm{id}}
\newcommand{\pr}{\mathrm{pr}}
\newcommand{\N}{\mathbb{N}}
\newcommand{\PP}{\mathcal{P}}
\newcommand{\st}{|\,}
\newcommand{\pihat}{{\hat{\pi}}}
\newcommand{\epshat}{{\hat{\eps}}}
\newcommand{\etahat}{{\hat{\eta}}}
\newcommand{\Ehat}{{\hat{E}}}
\newcommand{\Ghat}{{\hat{G}}}
\newcommand{\Hhat}{{\hat{H}}}
\newcommand{\Ebar}{{\bar{E}}}
\newcommand{\Gbar}{{\bar{G}}}
\newcommand{\Hbar}{{\bar{H}}}
\newcommand{\etabar}{{\bar{\eta}}}
\newcommand{\thetabar}{{\bar{\theta}}}
\newcommand{\rhobar}{{\bar{\rho}}}
\def\phi{\varphi}
\newcommand{\calD}{\mathcal{D}}
\newcommand{\calE}{\mathcal{E}}
\newcommand{\calH}{\mathcal{H}}
\newcommand{\calM}{\mathcal{M}}
\newcommand{\eps}{\varepsilon}
\newcommand{\na}{\mathrm{na}}
\newcommand{\ab}{\mathrm{ab}}
\renewcommand{\iff}{\Leftrightarrow}
\renewcommand{\implies}{\Rightarrow}
\newcommand{\isom}{\cong}
\newcommand{\normal}{\triangleleft}
\newcommand{\pwr}[2]{\,{}^{#1}\kern-1pt {#2}\,}
\newcommand{\fprod}[2][G]{\underset{#2\phantom{\, {#1}}}{{\prod}_{#1}}}
\newcommand{\PPP}{\PP/\kern-5pt\sim}
\newcommand{\onto}{\twoheadrightarrow}
\newcommand{\mle}{\preccurlyeq}
\def\moverlay{\mathpalette\mov@rlay}
\def\mov@rlay#1#2{\leavevmode\vtop{%
   \baselineskip\z@skip \lineskiplimit-\maxdimen
   \ialign{\hfil$\m@th#1##$\hfil\cr#2\crcr}}}
\newcommand{\charfusion}[3][\mathord]{
    #1{\ifx#1\mathop\vphantom{#2}\fi
        \mathpalette\mov@rlay{#2\cr#3}
      }
    \ifx#1\mathop\expandafter\displaylimits\fi}
\newcommand{\dotcup}{\charfusion[\mathbin]{\cup}{\cdot}}
\newcommand{\bigdotcup}{\charfusion[\mathop]{\bigcup}{\cdot}}
\newcommand{\subdemoinfo}[2]{\smallskip\noindent\textbf{#1:}
\emph{#2}\quad}
\begin{document}

\author{Dan Haran}
\title{On the uniqueness of the smallest embedding cover%}
%\thanks{Research supported by Israel Science Foundation Grant 696/13.}
}

\maketitle
\thispagestyle{empty}
%\doublespacing

\begin{abstract}
We prove the uniqueness of
a smallest embedding cover of a profinite group.
\end{abstract}

\clearpage
\pagenumbering{arabic}

\newpage
\section*{Introduction}
The purpose of this note is to
show the uniqueness of the smallest embedding cover
(\cite[Definition~27.4.3]{FJ};
Definition~\ref{definition; smallest embedding cover} here).
Chatzidakis \cite{Ch} proves this using model theoretic methods.
It is an open problem
(\cite[Problem 36.2.25]{FJ}
to prove the uniqueness
using only group theoretic methods.

Before we briefly explain 
how this is done here,
we introduce some notation for the whole paper
(which will also serve us for this explanation).

We use the word `family' for indexed family.

We use $\onto$ to denote an epimorphism of profinite groups.
An epimorphism $\eta \colon H \onto G$
is sometimes called
\textbf{cover} (of $G$)
and sometimes 
\textbf{extension} (of $G$).
Another epimorphism
$\pi \colon E \onto G$
is \textbf{dominated by $\eta$}
(or \textbf{$\eta$ dominates $\pi$}),
denoted $\pi \mle \eta$,
if there is
$\psi \colon H \onto E$
such that
$\eta = \pi \circ \psi$;
if such $\psi$ is an isomorphism of profinite groups,
we say that
$\pi$
is \textbf{isomorphic to $\eta$},
denoted $\pi \isom_G \eta$.

Now,
Definition~\ref{definition; smallest embedding cover}
defines an embedding cover,
and then it defines an I-cover
as a cover dominated by an embedding cover;
this is needed to define
a smallest embedding cover.
However, this definition of an I-cover
is non-intrinsic, and therefore hard to work with.
We define intrinsically
(Definition~\ref{basic cover})
\textbf{basic covers}
that are, to begin with,
simple examples of I-covers,
but later we show that
they play a central role in the characterization 
of arbitrary I-covers.

We heavily rely on our recent work
\cite{fund}
in which we characterize general epimorphisms of profinite groups.
More precisely,
\cite{fund}
presents an epimorphism $H \onto G$
as the inverse limit of a unique sequence of 
\textbf{fundaments}
\begin{equation*}
\xymatrix{
H \arr[r] &
\quad \cdots  \arr[r]^{\pi_3}
& G_2 \arr[r]^{\pi_2}
& G_1 \arr[r]^{\pi_1}
& G_0\rlap{ $ = G$}
}
\end{equation*}
that are (generalized) fiber products of indecomposable epimorphisms.
This presentation allows us to reduce the study of I-covers
to fundamental I-covers.
And it turns out
(Corollary~\ref{finite quotients of sec-k})
that fundamental I-covers with finite kernels
are precisesly basic covers.
This allows us to give a precise characterization
(Theorem~\ref{complete characterization})
of fundamental I-covers
in terms of the invariants introduced in
\cite{fund},
and thus to prove 
the desired uniqueness.

\section{Preliminaries}

An epimorphism
$\eta \colon H \onto G$
is \textbf{indecomposable} if
it is not an isomorphism
and
whenever $\eta$ is the composition of two epimorphisms
$\eta = \eta_2 \circ \eta_1$,
then either $\eta_1$ or $\eta_2$ is an isomorphism.

Consider a cartesian square
(\cite[Definition 25.2.2]{FJ})
of epimorphisms
\begin{equation}\label{basic}
\xymatrix{
H \arr[r]_\eta \arr[d]^\beta & G \arr[d]^\phi \\
B \arr[r]^\alpha & A \\
}
\end{equation}
(thus,
up to an isomorphism,
$H$ is the fiber product
$B \times_{A} G$ of $\alpha$ and $\phi$,
with coordinate projections $\beta$ and $\eta$.)
By \cite[Corollary 2.2]{fund},
%\ref{cartesian indecomposable},
if \eqref{basic} is cartesian, then
$\eta$ is indecomposable if and only if $\alpha$ is indecomposable.

A commutative square \eqref{basic} is cartesian
if and only if
$\Ker \beta \cap \Ker \eta = 1$ and
$(\Ker \beta)(\Ker \eta) = \Ker \phi \circ \eta$.
We call \eqref{basic} a \textbf{semi-cartesian square}
(\cite[Definition 2.4]{fund})
%\label{semi-cartesian}
if the latter of these two conditions is satisfied.

A cartesian square \eqref{basic} is \textbf{compact}
(\cite[Definition 2.9]{fund})
%\label{compact cartesian square}
if
no proper closed subgroup of $H$ is mapped simultaneously
by $\beta$ onto $B$
and by $\eta$ onto $G$.
Equivalently,
if $E$ is a profinite group
and $\beta' \colon E \onto B$ and $\eta' \colon E \onto G$
satisfy
$\alpha \circ \beta' = \phi \circ \eta'$,
then the unique homomorphism
$\psi \colon E \to H$ such that
$\beta \circ \psi = \beta'$ and $\eta \circ \psi = \eta'$
is surjective.

\begin{Proposition}
[{\cite[Proposition 2.12]{fund}}]
\label{indecomposable cartesian square}
%\label{indecomposable cartesian square}
A cartesian square \eqref{basic}
with $\alpha$ indecomposable
is compact
if and only if
$\phi$ does not dominate $\alpha$.
%there is no $\gamma \colon G \onto B$
%such that
%$\alpha \circ \gamma = \phi$.
\end{Proposition}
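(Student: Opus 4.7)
The plan is to prove both directions of the equivalence by contraposition, leaning on the universal property of the cartesian square together with the standard identifications $\beta|_{\Ker \eta} \colon \Ker \eta \isom \Ker \alpha$ and $\eta|_{\Ker \beta} \colon \Ker \beta \isom \Ker \phi$ that are immediate from the description $H \isom B \times_A G$. Indecomposability of $\alpha$ will be invoked exactly once in each direction.

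For the easy direction (compact $\implies$ $\phi$ does not dominate $\alpha$), I assume $\psi \colon G \onto B$ satisfies $\alpha \circ \psi = \phi$ and apply the universal property of the fiber product to the pair $(\psi, \id_G)$ to produce a section $\sigma \colon G \to H$ of $\eta$ with $\beta \circ \sigma = \psi$. Then $\sigma(G)$ is a closed subgroup of $H$ that maps onto $B$ via $\beta$ (since $\psi$ is surjective) and onto $G$ via $\eta$; moreover it must be proper, for otherwise $\sigma$ would be an isomorphism, hence so would $\eta$, and the cartesian property would then force $\alpha$ to be an isomorphism too, contradicting indecomposability.

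For the converse, suppose $H'$ is a proper closed subgroup of $H$ with $\beta(H') = B$ and $\eta(H') = G$. Put $K' = \Ker \eta \cap H'$, a normal subgroup of $H'$, and $N = \beta(K')$, a closed normal subgroup of $B$. Since $\beta(\Ker \eta) = \Ker \alpha$, we have $N \subseteq \Ker \alpha$, so $\alpha$ factors as $\alpha = \bar\alpha \circ q$ with $q \colon B \onto B/N$ and $\bar\alpha \colon B/N \onto A$. The composite $q \circ \beta|_{H'} \colon H' \onto B/N$ has kernel containing $K' = \Ker(\eta|_{H'})$, so it descends through $\eta|_{H'}$ to a surjection $\psi \colon G \onto B/N$, and the identity $\alpha \circ \beta = \phi \circ \eta$ restricted to $H'$ translates into $\bar\alpha \circ \psi = \phi$.

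Indecomposability of $\alpha$ applied to $\alpha = \bar\alpha \circ q$ leaves two cases, and the main obstacle is ruling out the wrong one. If $q$ is an isomorphism then $N = 1$, so $\psi \colon G \onto B$ is the sought-after domination witnessing $\phi = \alpha \circ \psi$. If instead $\bar\alpha$ is an isomorphism, then $N = \Ker \alpha$, so $\beta(K') = \Ker \alpha$; combined with the injectivity of $\beta|_{\Ker \eta}$, this forces $K' = \Ker \eta$, i.e.\ $\Ker \eta \subseteq H'$, whence $H' = \eta^{-1}(\eta(H')) = \eta^{-1}(G) = H$, contradicting properness of $H'$.
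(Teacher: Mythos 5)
Your proof is correct. Both directions go through: the section $\sigma$ built from a hypothetical domination witness gives the proper subgroup $\sigma(G)$ (proper because otherwise $\eta$, and hence $\alpha$, would be an isomorphism, contradicting indecomposability), and in the converse direction the factorization $\alpha = \bar\alpha \circ q$ through $B/N$ is exactly where indecomposability enters, with the $\bar\alpha$-iso case correctly excluded by the identification $\beta|_{\Ker\eta} \colon \Ker\eta \isom \Ker\alpha$. Since the paper merely cites [fund, Proposition 2.12] rather than proving it, I cannot compare against the source's argument, but yours is the natural direct verification from the definitions.

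One small streamlining worth noting: in the second direction you can dispose of the unwanted case before invoking indecomposability. From $H'$ proper and $\eta(H') = G$ one gets $K' = H' \cap \Ker\eta \subsetneq \Ker\eta$ immediately (else $H' \supseteq \Ker\eta$ forces $H' = \eta^{-1}(G) = H$), and then $N = \beta(K') \subsetneq \Ker\alpha$ by injectivity of $\beta$ on $\Ker\eta$. Thus $\bar\alpha$ is automatically non-trivial, and indecomposability leaves only the $q$-iso case, $N = 1$. This is logically the same as your contradiction argument, just reorganized so the case split disappears.
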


\begin{Remark}
[{\cite[Lemma 2.3]{FH}}]
\label{square trivialities}
In the following commutative diagram
of epimorphisms with three squares
(the left one, the right one, and \eqref{basic}):
\begin{equation}\label{two squares}
\xymatrix@=24pt{
H
\arr[r]_{\zeta} \arr[d]^{\beta}
\arr@/^1pc/[rr]^{\eta}
& H' \arr[r]_{\eta'} \arr[d]^{\beta'}
& G \arr[d]^{\phi} 
\\
B
\arr@/_1pc/[rr]_{\alpha}
\arr[r]^{\gamma} & B' \arr[r]^{\alpha'} & A \\
}
\end{equation}
\begin{itemize}
\item[(a)]
If two of the squares are cartesian,
then so is the third one.
\item[(b)]
If all the three squares are cartesian,
then 
\eqref{basic} is compact
if and only if
both the left and right square are compact.
\end{itemize}
Consider a cartesian square \eqref{basic}.
\begin{itemize}[resume]
\item[(c)]
If there exist
either
$\gamma \colon B \onto B'$
and
$\alpha' \colon B' \onto A$
such that
$\alpha' \circ \gamma = \alpha$,
or
$\zeta \colon H \onto H'$
and
$\eta' \colon H' \onto G$
such that
$\eta' \circ \zeta = \eta$,
then there exists a commutative diagram \eqref{two squares}
with three cartesian squares.
\end{itemize}
\end{Remark}

\begin{Lemma}
[{\cite[Lemma 2.8(b)]{fund}}]
\label{expand}
%\label{expand}(b)
Consider two commutative diagrams
\newline
\begin{subequations}
\begin{tabularx}{\textwidth}{Xp{.1cm}X}
\begin{equation}\label{semi1}
\xymatrix@=24pt{
H' \arr[r]_{\eta'} \arr[d]^{\beta'} & G \arr[d]^\phi \\
B \arr[r]^\alpha & A \\
}
\end{equation}
& &
\begin{equation}\label{semi2}
\xymatrix@=24pt{
H \arr[r]_\eta \arr[d]^\beta & G \arr[d]^\phi \\
B \arr[r]^\alpha & A \\
}
\end{equation}
\end{tabularx}
\end{subequations}
and let
$\psi \colon H' \to H$
be a homomorphism
such that
$\beta' = \beta \circ \psi$
and
$\eta' = \eta \circ \psi$.
If
\eqref{semi1} is semi-cartesian
and
\eqref{semi2} is cartesian,
then $\psi$ is surjective.
\end{Lemma}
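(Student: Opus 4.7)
The plan is to reduce to showing $\Ker\beta \subseteq \psi(H')$. Once this containment is in hand, surjectivity of $\beta' = \beta\circ\psi$ gives $\psi(H')\cdot \Ker\beta = H$, and hence $\psi(H') = H$.

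To produce kernel elements in the image of $\psi$, I first show that $\eta'$ restricts to a surjection $\Ker\beta' \onto \Ker\phi$. Given $g \in \Ker\phi$, pick $h' \in H'$ with $\eta'(h') = g$ using the surjectivity of $\eta'$; then commutativity of \eqref{semi1} gives $\alpha(\beta'(h')) = \phi(g) = 1$, so $h' \in \Ker(\phi\circ\eta')$. Semi-cartesianness of \eqref{semi1} then yields a factorization $h' = k_1 k_2$ with $k_1 \in \Ker\beta'$ and $k_2 \in \Ker\eta'$, whence $\eta'(k_1) = \eta'(h') = g$. Now let $k \in \Ker\beta$; by commutativity of \eqref{semi2}, $\eta(k) \in \Ker\phi$, so I may choose $h' \in \Ker\beta'$ with $\eta'(h') = \eta(k)$. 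The element $\psi(h')$ then satisfies $\beta(\psi(h')) = \beta'(h') = 1$ and $\eta(\psi(h')) = \eta'(h') = \eta(k)$, so $k^{-1}\psi(h') \in \Ker\beta \cap \Ker\eta$. This intersection is trivial because \eqref{semi2} is cartesian, whence $k = \psi(h') \in \psi(H')$, as required.

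There is no significant obstacle here: semi-cartesianness on the source side is exactly the hypothesis needed to lift elements of $\Ker\phi$ through $\eta'$ while staying inside $\Ker\beta'$, and cartesianness on the target side then supplies the trivial intersection of kernels that makes the final matching argument work.
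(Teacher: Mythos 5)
Your proof is correct. The paper itself gives no proof of this lemma, instead citing Lemma 2.8(b) of the preprint on fundaments; but your argument is the natural one for this kind of statement and is complete. The reduction to $\Ker\beta \subseteq \psi(H')$ (using that $\psi(H')$ is a closed subgroup and that $\beta' = \beta\circ\psi$ is onto, so $\psi(H')\,\Ker\beta = H$) is standard, and the heart of the matter is exactly as you describe: semi-cartesianness of \eqref{semi1} lets you lift any element of $\Ker\phi$ through $\eta'$ to an element of $\Ker\beta'$, while the trivial intersection $\Ker\beta\cap\Ker\eta = 1$ coming from cartesianness of \eqref{semi2} forces the lifted element $\psi(h')$ to coincide with the given $k\in\Ker\beta$. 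No gaps.
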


Let
$\calH = (\eta_i\colon H_i \onto G \st i \in I)$
be a family of epimorphisms.
Its
\textbf{fiber product}
is the profinite group
\begin{equation}\label{concrete}
\fprod{i \in I} H_i =
\Big\{
(h_i)_{i \in I} \in \prod_{i \in I} H_i
\st
\eta_i(h_i) = \eta_j(h_j)
\textnormal{ for all } i,j \in I
\Big\}
\end{equation}
together with the following epimorphisms:
the projection $\pr_{I,j}$ on the $j$-th coordinate,
$\pr_{I,j}\colon \fprod{i \in I} H_i \onto H_j$,
for every $j \in I$,
and
${\eta_I = \eta_j\circ \pr_{I,j} \colon \fprod{i \in I} H_i \to G}$
(independent of $j \in I$).
More generally,
for $J \subseteq I$ we denote by
$\pr_{I,J}$ the projection
$\fprod{i \in I} H_i \onto \fprod{i \in J} H_i$
on the coordinates in $J$
given by
$(h_i)_{i \in I} \mapsto (h_i)_{i \in J}$.

A fiber product
$\fprod{i \in I} H_i$
is \textbf{compact}
if
there is no proper subgroup $H$ of 
$\fprod{i \in I} H_i$
such that
$\pr_{I,j}(H) = H_j$
for every $j \in I$.
Obviously, if 
$\fprod{i \in I} H_i$
is compact,
and there is a family of epimorphisms
$(p_i \colon H \onto H_i)_{i \in I}$,
then the induced map
$p \colon H \to \fprod{i \in I} H_i$
is surjective,
because the subgroup $p(H)$ of
$\fprod{i \in I} H_i$
cannot be proper.

\cite[Section 3]{fund}
%\label{mfp I}
gives some elementary properties
of fiber products;
while 
\cite[Section 6]{fund}
%\label{mfp II}
brings more advanced results,
such as
Proposition~\ref{indecomposable quotients of a fiber product pi is id}
below.

\begin{Remark}\label{about abelian}
Fix a finite simple $G$-module $A$.
Then $F_A = \End_G(A)$ is a finite field
(\cite[Remark 4.3(a)]{fund})
%\label{vector space}
and 
$H^2(G,A)$
is a vector space over $F_A$
(\cite[Remark 5.4(a)]{fund}).
%\label{cohomology is a vector space}

Elements of
$H^2(G,A)$
represent extensions of $G$ by $A$.
Then $x,x' \in H^2(G,A)$
represent isomorphic extensions
if and only if
there is $a \in F_A^\times$
such that
$x' = a x$
(and $0$ represents the split extension).
Thus the set of isomorphism classes of extensions of $G$ by $A$ is
the projective space
$H^2(G,A)/F_A^\times$.
By abuse of notation
we refer to extensions as elements of
$H^2(G,A)$;
our assertions will not depend on the choice
of representatives modulo $F_A^\times$.
\end{Remark}

\begin{Proposition}[{\cite[Theorem~6.7]{fund}}]
\label{indecomposable quotients of a fiber product pi is id}
%\label{indecomposable quotients of a fiber product pi is id}
Let
$\eta_I \colon \fprod{i \in I} H_i \onto G$
be the fiber product
of a family
$(\eta_i \colon H_i \onto G)_{i \in I}$
of indecomposable extensions
of $G$.
Let
$\zeta \colon E \onto G$ be indecomposable.
Let $C =\Ker \zeta$;
if $C$ is non-abelian,
let
$I_\zeta = \{i \in I \st \eta_i \isom_G \zeta\}$,
and
if $C$ is abelian,
let
$I_C = \{i \in I \st \Ker \eta_i \isom_G C\}$.
%
%We could unify these two distinct definitions of $I_C$:
%In the non-abelian case an extension $\eta_i$ of $G$
%with kernel $C$ (with a trivial center)
%is uniquely determined by $C$ as a `non-abelian' $G$-module,
%that is,
%$G$ together with a homomorphism
%$G \to \Aut(C)$.
%However, we cannot replace the condition
%$\Ker \eta_i \isom_G C$ in the abelian case by
%$\eta_i \isom_G \zeta$,
%because there may be several distinct extensions
%with the same abelian kernel $C$,
%e.g.,
%two non-isomorphic extensions of $\Z/2\Z$
%by $\Z/2\Z$.
%
Then $\zeta \mle \eta_I$
if and only if
exactly one of the following two conditions holds:
\begin{itemize}
\item[(a)]
$C$ is non-abelian
and
$I_\zeta \ne \emptyset$.
\item[(b)]
$C$ is abelian,
$I_C \ne \emptyset$,
and $\zeta$ is
a nontrivial linear combination
%$\zeta = \sum_{i \in I_C} \alpha_i \eta_i$
of $(\eta_i)_{i \in I_C}$
over $F_C = \End_G(C)$.
\end{itemize}
Moreover, if
$\eps \colon \fprod{i \in I} H_i \onto E$
is an epimorphism
such that
$\zeta \circ \eps = \eta_I$,
then
\begin{itemize}[resume]
\item[(a')]
if $C$ is non-abelian,
there is a unique $j \in I$
such that $\Ker \eps = \Ker \pr_{I,j}$;
this $j$ is in $I_\zeta$ and 
there is an isomorphism $\eps_j \colon H_j \to E$
such that
$\eps_j \circ \pr_{I,j}= \eps$
and
$\zeta \circ \eps_j = \eta_j$.

\item[(b')]
if $C$ is abelian,
there is
$\eps' \colon \fprod{i \in I_C} H_i \onto E$
such that
$\eps = \eps' \circ \pr_{I,I_C}$
and
$\zeta \circ \eps' = \eta_{I_C}$.
\end{itemize}
\end{Proposition}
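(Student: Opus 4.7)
The plan is to handle the two ``if'' directions by explicit constructions, then derive the converse together with the moreover parts by analyzing the restriction of any $\eps$ satisfying $\zeta \circ \eps = \eta_I$ to $\Ker \eta_I$.

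For the easy directions: if (a) holds, composing an isomorphism $H_j \to E$ over $G$ (supplied by $\eta_j \isom_G \zeta$) with $\pr_{I,j}$ gives the required $\eps$. If (b) holds, the $F_C$-linear combination expressing the class of $\zeta$ in $H^2(G, C)$ in terms of those of $(\eta_i)_{i \in I_C}$ corresponds, via the Baer-sum interpretation (Remark~\ref{about abelian}), to an epimorphism $\eps' \colon \fprod{i \in I_C} H_i \onto E$ satisfying $\zeta \circ \eps' = \eta_{I_C}$; then set $\eps = \eps' \circ \pr_{I, I_C}$.

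For the converse and the moreover parts, fix $\eps$ with $\zeta \circ \eps = \eta_I$. Since $\Ker \eta_I = \prod_{i \in I} \Ker \eta_i$ sits inside the fiber product as a direct product, restricting $\eps$ yields a $G$-equivariant epimorphism $\bar\eps \colon \prod_{i \in I} \Ker \eta_i \onto C$. Indecomposability makes each $\Ker \eta_i$ a minimal closed $H_i$-normal subgroup, and $C$ a minimal closed $E$-normal subgroup. In the non-abelian case, $C$ is then (up to a permutation action of $G$) a power of a non-abelian finite simple group, and the $G$-equivariant epimorphisms from such direct products are very constrained: $\bar\eps$ must kill all but one coordinate factor, forcing $\Ker \eta_j \isom_G C$ --- hence $\eta_j \isom_G \zeta$ --- for exactly one $j$; the lifting machinery in Lemma~\ref{expand} applied to the cartesian square surrounding $\pr_{I,j}$ then upgrades this to the isomorphism $\eps_j$ of (a') and yields $\Ker \eps = \Ker \pr_{I,j}$, with uniqueness of $j$ immediate since distinct such kernels are distinct. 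In the abelian case, $C$ is a simple $F_C[G]$-module; Schur's lemma forces $\bar\eps$ to kill every $\Ker \eta_i$ with $\Ker \eta_i \not\isom_G C$, so $\eps$ factors as $\eps' \circ \pr_{I, I_C}$ on the sub-fiber product, and the ensuing $F_C$-relation among extension classes in $H^2(G, C)$ is nontrivial because $\eps'$ is surjective.

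The main obstacle will be the non-abelian case: rigorously classifying $G$-equivariant epimorphisms out of direct products of minimal normal subgroups --- in particular excluding ``diagonal'' maps when several $\eta_i$ are isomorphic to $\zeta$ --- and using this to pin down $\Ker \eps$ precisely. I expect to combine the automorphism structure of non-abelian characteristically simple groups with the compactness and lifting properties of cartesian squares recalled in Remark~\ref{square trivialities} and Lemma~\ref{expand}, which together produce the needed isomorphism $\eps_j$ canonically from the data.
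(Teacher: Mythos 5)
This proposition is cited in the paper from \cite[Theorem~6.7]{fund} and carries no proof here, so there is nothing in the paper to compare your argument against. Reviewing the proposal on its own merits, the overall architecture is sensible: reduce to the restriction $\bar\eps \colon \Ker\eta_I = \prod_{i\in I}\Ker\eta_i \onto C$ (the equality of kernels and the fact that $\Ker\eps \le \Ker\eta_I$, hence $\bar\eps$ is onto, are both correct), treat the abelian and non-abelian cases separately, and reconstruct $\eps$ from the behavior of $\bar\eps$.

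That said, the sketch has real gaps, and they are precisely where the work lies. In the non-abelian case the decisive claim is that the only closed $\fprod{i \in I} H_i$-normal subgroups of $\prod_i \Ker\eta_i$ that can occur as $\Ker\eps$ (i.e.\ with quotient $\isom_G E$) are the coordinate kernels $\Ker\pr_{I,j}$; you acknowledge this is the main obstacle but do not actually argue it. The needed fact is that for a non-abelian characteristically simple $C$, twisted diagonals in $C\times C$ are not normal (because $Z(C)=1$), which kills the diagonal maps you worry about; but this has to be pushed through the full fiber product (possibly infinite $I$), not just a pair, and has to account for the $G$-group (as opposed to $G$-module) structure of $C$ --- the conjugation action of $G$ on $\Ker\eta_i$ is only defined up to inner automorphisms, so phrases like ``$G$-equivariant epimorphism'' need to be made precise before Schur-type rigidity arguments can be run. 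In the abelian ``if (b)'' direction, saying the linear combination ``corresponds, via the Baer-sum interpretation, to an epimorphism $\eps'$'' skips the actual construction: a Baer sum gives an extension representing the sum of classes, and one must exhibit a concrete epimorphism from the fiber product $\fprod{i\in I_C} H_i$ onto it commuting over $G$, and check that linear independence of the coefficients implies surjectivity (and conversely that triviality of the combination forces $\eps$ not to be onto). Finally, invoking Lemma~\ref{expand} to produce the isomorphism $\eps_j$ is unnecessary and not really the right tool; once $\Ker\eps = \Ker\pr_{I,j}$ is known, $\eps_j$ is just the induced isomorphism on quotients, and the compatibility $\zeta\circ\eps_j = \eta_j$ follows from $\zeta\circ\eps = \eta_I = \eta_j\circ\pr_{I,j}$. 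In short: correct shape, but the crux (diagonal exclusion for non-abelian $C$) and the construction behind (b) are named rather than proved.
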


\begin{Definition}\label{fundament}
Let $\pi \colon H \onto G$ be an epimorphism of profinite groups.
The \textbf{fundament kernel} of $\pi$,
denoted $\calM(\pi)$,
is the intersection of all
$N \normal H$
contained in $\Ker \pi$
such that
the map $H/N \onto G$
induced from $\pi$
is indecomposable.
Clearly $\calM(\pi) \normal H$ and $\calM(\pi) \le \Ker \pi$.
An epimorphism
$\bar\pi \colon \Hbar \onto G$
is the \textbf{fundament} of $\pi$,
if it is isomorphic to the quotient map
$\bar\pi \colon H/\calM(\pi) \onto G$,
that is,
if there is
$\rho \colon H \onto \Hbar$
such that
$\bar\pi \circ \rho = \pi$
and
$\Ker \rho = \calM(\pi)$;
we then also say that
$\bar\pi$ is
\textbf{the fundament of $\pi$ by $\rho$}.
We say that $\pi$ is \textbf{fundamental},
if $\bar\pi = \pi$, that is,
if $\calM(\pi) = 1$.
Notice that the fundament of an epimorphism
is fundamental.
\end{Definition}

\begin{Lemma}[{\cite[Corollary 8.6]{fund}}]
\label{fiber product is fundamental}
%\label{fiber product is fundamental}
A cover of $G$ is fundamental
if and only if it is isomorphic
to the fiber product
of a family of indecomposable extensions of $G$.
\end{Lemma}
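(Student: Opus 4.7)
The plan is to prove the two implications separately. The ``if'' direction is essentially immediate from the definition of $\calM$: if $\pi \isom_G \eta_I \colon \fprod{i \in I} H_i \onto G$ for an indecomposable family $(\eta_i)_{i \in I}$, then for each $i$ the kernel $N_i := \Ker \pr_{I,i}$ is normal in $\fprod{j} H_j$, contained in $\Ker \eta_I$, with induced quotient isomorphic to $\eta_i$ and therefore indecomposable. Hence $\calM(\eta_I) \le \bigcap_i N_i$, and by the explicit description~\eqref{concrete} the intersection consists only of the trivial tuple, giving $\calM(\pi) = 1$.

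For the converse, let $\pi \colon H \onto G$ be fundamental, set $K = \Ker \pi$, and let $\calN = \{N \normal H : N \le K, \, \eta_N \colon H/N \onto G \text{ is indecomposable}\}$. By fundamentality $\bigcap_{N \in \calN} N = 1$. Forming the fiber product over all of $\calN$ naively does not recover $H$: already for $H = (\mathbb{Z}/2)^2 \onto 1$ with its three index-two subgroups, the fiber product has order $8 \ne 4 = |H|$. Guided by Proposition~\ref{indecomposable quotients of a fiber product pi is id}, I would select a subfamily $\calN_0 \subseteq \calN$ by keeping every $N \in \calN$ whose kernel $\Ker \eta_N$ is non-abelian, and for each $G$-isomorphism class $[C]$ of finite simple abelian module, retaining only those $N$ with $\Ker \eta_N \isom_G C$ whose classes form an $F_C$-basis of the span of all such quotient maps. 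Let $F_0 := \fprod{N \in \calN_0} H/N$ with structure map $\eta_{\calN_0} \colon F_0 \onto G$, and let $p_0 \colon H \to F_0$ be the canonical map, so $\eta_{\calN_0} \circ p_0 = \pi$.

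Injectivity of $p_0$ reduces to $\bigcap_{N \in \calN_0} N = 1$, which follows once every $N \in \calN \setminus \calN_0$ is seen to contain $\bigcap_{N' \in \calN_0} N'$; in the abelian case this is exactly Proposition~\ref{indecomposable quotients of a fiber product pi is id}(b'), applied to identify $\eta_N$ as dominated by $\eta_{\calN_0}$. Surjectivity of $p_0$ is the main obstacle: I would present $F_0$ as a (possibly transfinite) inverse limit of cartesian squares obtained by adjoining one $\eta_N$ at a time, and at each step use Proposition~\ref{indecomposable cartesian square} to verify that the new square is compact---the newly adjoined $\eta_N$ is not dominated by the preceding partial fiber product, either because it represents a fresh non-abelian isomorphism type or because it is $F_C$-linearly independent from the previously added basis elements. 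Remark~\ref{square trivialities}(b) then propagates compactness along the sequence, Lemma~\ref{expand} upgrades each intermediate map from $H$ into a surjection, and passing to the limit gives $p_0$ surjective. The most delicate points are the abelian basis construction (showing that the set of dominated classes in $H^2(G,C)/F_C^\times$ genuinely arises from an $F_C$-subspace, independently of the lemma we are trying to prove) and the inductive compactness verification; both rest squarely on Proposition~\ref{indecomposable quotients of a fiber product pi is id}.
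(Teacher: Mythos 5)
The paper does not actually prove this Lemma; it is quoted verbatim as Corollary~8.6 of the reference \cite{fund}, and the only internal trace of an argument is a commented-out derivation of the equivalent Corollary that follows Theorem~\ref{fundamental mult supp}, in which one builds the index set $I$ directly from the invariants: $\mult_\eta(\pi)$ copies of $\eta$ for each $\eta\in\Lambda_\na(G)$, and for each $C\in\Lambda_\ab(G)$ a basis of $\supp_C(\pi)$ \emph{augmented by $\mult_C(\pi)$ copies of the split extension $\sigma_C^{(0)}$}. That last clause is exactly what your plan is missing.

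Your ``if'' direction is correct: the kernels $N_i=\Ker\pr_{I,i}$ are normal, lie inside $\Ker\eta_I$, have indecomposable quotient $\eta_i$, and intersect trivially by \eqref{concrete}, so $\calM(\eta_I)=1$.

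In the ``only if'' direction there is a genuine gap, and your own test case exposes it. Take $\pi\colon H=(\Z/2)^2\onto G=1$ and $C=\Z/2$. Every $N\in\calN$ is one of the three index-two subgroups, and each $\eta_N\in H^2(G,C)=0$ is the zero class. The span of all such classes is the zero subspace, so a ``basis of the span'' is empty and your rule yields $\calN_0=\emptyset$, hence $F_0=1$ and $p_0\colon H\to F_0$ is not injective (indeed $\bigcap_{N\in\calN_0}N=H$, not $1$). More generally, whenever split extensions occur, the split class $\sigma_C^{(0)}$ is the zero vector of $H^2(G,C)$ and therefore cannot belong to any linearly independent family, so your selection criterion discards all the copies of the split extension that the fiber product needs. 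The cohomological span only records the \emph{support} invariant $\supp_C(\pi)$; the \emph{multiplicity} $\mult_C(\pi)$ (a relation dimension in the sense of Notation~\ref{Lambda}) is additional data not visible in $H^2(G,C)$, and in the split-only case it is the entire content of the claim. Your transfinite compactness loop inherits the same defect: the step ``adjoin $\eta_N$ only if it is $F_C$-linearly independent of what came before'' never adjoins a zero class, so the surjectivity you obtain is surjectivity onto a group that is too small. To repair the argument along your lines you would have to replace ``basis of the span of the classes in $H^2(G,C)$'' by a choice governed by the $C$-isotypic part of $\Ker\pi$ itself (so that a free-standing $\Z/2$-quotient is counted even though its cohomology class vanishes), which is precisely what the $\mult_C$ invariant in Theorem~\ref{fundamental mult supp} encodes. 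The relevant injectivity/surjectivity statements also require more care than sketched, since distinct $N\in\calN$ can yield isomorphic quotient covers, and domination of $\eta_N$ by $\eta_{\calN_0}$ as abstract covers of $G$ does not by itself force the concrete containment $N\supseteq\bigcap_{N'\in\calN_0}N'$ inside $H$.
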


\begin{Notation}\label{Lambda}
For a profinite group $G$
let
\begin{itemize}
\item[(a)]
$\Lambda_\ab(G)$ be the set of isomorphism classes of
finite simple $G$-modules;
\item[(b)]
$\Lambda_\na(G)$
be a set of representatives of the isomorphism classes 
of indecomposable epimorphisms onto $G$ 
with non-abelian kernel;
\item[(c)]
$\Lambda(G) = \Lambda_\ab(G) \dotcup \Lambda_\na(G)$.
\end{itemize}
For every $C \in \Lambda_\ab(G)$
let $\sigma_C^{(0)}$ be the split extension of $G$ by $C$
and let $F_C = \End_G(C)$.

Furthermore,
let $(v_i)_{i \in I}$
be a family of vectors in a vector space $V$ over a field $F$.
Denoting by $F^I$ the vector space with basis $I$,
let $T \colon F^I \to V$
be the unique linear map that maps $i$ onto $v_i$,
for all $i \in I$.
Then 
$\dim \Ker T$ is the \textbf{relation dimension}
of $(v_i)_{i \in I}$.
\end{Notation}

\begin{Theorem}[{\cite[Theorem 8.7]{fund}}]
\label{fundamental mult supp}
%\label{fundamental mult supp}
Let $\pi \colon H \onto G$ be fundamental.
Then for every
$\lambda \in \Lambda(G)$
there is a unique cardinality
$\mult_\lambda(\pi)$
and for every $C \in \Lambda_\ab(G)$
there is a unique $F_C$-subspace $\supp_C(\pi)$ of $H^2(G,C)$
with the following property:

Let
$\eta_I \colon \fprod{i \in I} H_i \onto G$
be the fiber product
of a family
of indecomposable extensions
$(\eta_i \colon H_i \onto G)_{i \in I}$
of $G$.
Denote
$I_\eta =\{i \in I \st \eta_i \isom_G \eta\}$,
for every $\eta \in \Lambda_\na(G)$,
and
$I_C =\{i \in I \st \Ker \eta_i \isom_G C\}$,
for every $C \in \Lambda_\ab(G)$.
Then $\pi$ is isomorphic to $\eta_I$
if and only if
\begin{itemize}
\item[(a)]
$|I_\eta| = \mult_\eta(\pi)$,
for every $\eta \in \Lambda_\na(G)$,
\item[(b)]
$\supp_C(\pi) = \Span(\eta_i)_{i \in I_C}$,
for every $C \in \Lambda_\ab(G)$.

\item[(c)]
$\mult_C(\pi)$
is the relation dimension of
$(\eta_i)_{i \in I_C}$,
for every $C \in \Lambda_\ab(G)$.
\end{itemize}
\end{Theorem}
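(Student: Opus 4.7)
The plan is to use Lemma~\ref{fiber product is fundamental} to present $\pi$ as a fiber product $\eta_I$ of a family of indecomposable extensions of $G$, define the candidate invariants from this presentation via the formulas in (a), (b), (c), and then invoke Proposition~\ref{indecomposable quotients of a fiber product pi is id} to show the definitions do not depend on the chosen presentation.

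Concretely, I start by picking any family $(\eta_i)_{i\in I}$ with $\eta_I \isom_G \pi$ and set $\mult_\eta(\pi) := |I_\eta|$ for each $\eta \in \Lambda_\na(G)$, together with $\supp_C(\pi) := \Span_{F_C}(\eta_i)_{i \in I_C}$ and $\mult_C(\pi) :=$ the relation dimension of $(\eta_i)_{i \in I_C}$ for each $C \in \Lambda_\ab(G)$. Conditions (a)--(c) hold for this presentation by construction, so the content of the theorem is that a different presentation $\eta'_J \isom_G \pi$ produces the same invariants. To see this, I would characterize each invariant intrinsically in terms of $\pi$. For $\eta \in \Lambda_\na(G)$, part (a') of Proposition~\ref{indecomposable quotients of a fiber product pi is id} puts $I_\eta$ in bijection with the set of kernels $\Ker\eps$ of epimorphisms $\eps \colon H \onto E$ fitting into a commutative square $\zeta\circ\eps = \pi$ with $\zeta \isom_G \eta$; since this set depends only on $\pi$, we get $|I_\eta| = |J_\eta|$. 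For $C \in \Lambda_\ab(G)$, part (b) identifies the indecomposable $\zeta \mle \pi$ with kernel $\isom_G C$ as the nonzero elements of $\Span_{F_C}(\eta_i)_{i \in I_C}$ modulo $F_C^\times$, so its span equals $\Span(\eta'_j)_{j \in J_C}$, giving well-definedness of $\supp_C(\pi)$.

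The delicate part is recovering $|I_C|$ intrinsically, and hence $\mult_C(\pi) = |I_C| - \dim_{F_C}\supp_C(\pi)$. Here I use part (b'): every $\eps \colon H \onto E$ onto an indecomposable extension $\zeta \colon E \onto G$ with $\Ker\zeta \isom_G C$ factors through $\pr_{I,I_C}$, and by specializing to $\eps = \pr_{I,i}$ for $i$ running through $I_C$ one sees that $\bigcap_\eps \Ker\eps = \Ker \pr_{I,I_C}$. Thus the normal subgroup $N_C := \bigcap_\eps \Ker\eps$ of $H$ is intrinsic to $\pi$, and $H/N_C \onto G$ is isomorphic over $G$ to $\fprod{i \in I_C} H_i \onto G$, whose kernel is $C^{I_C}$ as a $G$-module and has $F_C$-dimension $|I_C|\cdot\dim_{F_C} C$, thereby pinning down $|I_C|$. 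The main obstacle I anticipate is the case of infinite $I_C$: one must handle the cardinal arithmetic in $\mult_C(\pi) = |I_C| - \dim_{F_C}\supp_C(\pi)$ cleanly and verify that the intersection and universality identifications above behave correctly at infinite cardinalities, which will require care with the compactness of the fiber product and with inverse-limit arguments.
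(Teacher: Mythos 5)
This statement is Theorem 8.7 of \cite{fund}; the present paper cites it as a black box and gives no proof, so there is nothing here to compare against directly. Reviewing your proposal on its own terms, the outline is reasonable for establishing \emph{well-definedness} of the invariants in the case where all the index sets $I_C$ are finite, but it has two genuine gaps.

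First, you only argue one direction of the ``if and only if.'' The theorem asserts that for any family $(\eta_i)_{i\in I}$ of indecomposable extensions, $\pi\isom_G\eta_I$ \emph{if and only if} (a), (b), (c) hold. Your argument, by defining the invariants from one fixed presentation and then using Proposition~\ref{indecomposable quotients of a fiber product pi is id} to read them off intrinsically, handles ``$\pi\isom_G\eta_I \implies$ (a),(b),(c)'' --- that is, presentation-independence. It does not touch the converse: that two fiber products of indecomposables with matching multiplicities and supports must be $G$-isomorphic. That sufficiency direction is the substantive content (it is what makes these data a \emph{complete} invariant), and nothing in your sketch addresses it; one would need to actually construct a $G$-isomorphism, e.g.\ by matching the non-abelian factors via (a$'$) and comparing the abelian fiber products via linear algebra over $F_C$, and that is not a routine consequence of the cited proposition.

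Second, the step recovering $|I_C|$ from $\Ker\pr_{I,I_C}$ does not survive infinite cardinalities, and you rightly flag this, but flagging it does not fill the hole. Your identity $N_C=\Ker\pr_{I,I_C}$ is fine: the inclusion $\supseteq$ comes from (b$'$), and $\subseteq$ from taking $\eps=\pr_{I,i}$, $i\in I_C$. But the kernel of $\fprod{i\in I_C}H_i\onto G$ is the \emph{direct product} $C^{I_C}$, and for infinite $I_C$ its dimension as an abstract $F_C$-vector space is $|F_C|^{|I_C|}>|I_C|$, not $|I_C|\cdot\dim_{F_C}C$. So the claimed equation ``$\dim_{F_C}\ker = |I_C|\cdot\dim_{F_C}C$'' breaks, and even if one recovered $|I_C|$ by a different (topological) invariant, the subtraction $\mult_C(\pi)=|I_C|-\dim_{F_C}\supp_C(\pi)$ is ill-defined in infinite cardinal arithmetic. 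The theorem defines $\mult_C(\pi)$ directly as the relation dimension $\dim\Ker T$, not as a difference, and showing \emph{that} is presentation-independent requires an argument that works with $\Ker T$ itself rather than with $|I_C|$. In short, the finite-kernel case of your outline is plausible, but the infinite case --- which is exactly where the theorem has content beyond finite combinatorics, and which is used later (e.g.\ Theorem~\ref{aleph} concerns $\mult_\lambda\le\aleph_0$) --- is left open.
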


\begin{Corollary}[{\cite[Corollary 8.6]{fund}}]
An epimorphism
$\pi \colon H \onto G$
is fundamental
if and only if
it is
the fiber product of a family of indecomposable covers of $G$.
\end{Corollary}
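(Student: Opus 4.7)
The backward direction is already delivered by Lemma~\ref{fiber product is fundamental}, since any fiber product of indecomposable extensions of $G$ is fundamental. Thus the work lies in the forward direction: given a fundamental $\pi \colon H \onto G$, exhibit a family $(\eta_i \colon H_i \onto G)_{i \in I}$ of indecomposable extensions whose fiber product is isomorphic to $\pi$. My plan is to read off the indexing family from the invariants guaranteed by Theorem~\ref{fundamental mult supp} and then invoke its ``if and only if'' criterion.

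Concretely, starting from $\pi$, Theorem~\ref{fundamental mult supp} supplies cardinalities $\mult_\lambda(\pi)$ for every $\lambda \in \Lambda(G)$ and $F_C$-subspaces $\supp_C(\pi) \le H^2(G,C)$ for every $C \in \Lambda_\ab(G)$. I will construct the family $(\eta_i)_{i \in I}$ as a disjoint union of two kinds of pieces. For the non-abelian part: for every $\eta \in \Lambda_\na(G)$, take $\mult_\eta(\pi)$ copies of (a representative of) $\eta$; this automatically achieves condition~(a) of the theorem. For the abelian part: fix $C \in \Lambda_\ab(G)$, choose any $F_C$-basis of $\supp_C(\pi)$, and then enlarge it by $\mult_C(\pi)$ additional vectors in $\supp_C(\pi)$ chosen so that the resulting family of vectors has exactly $\mult_C(\pi)$ independent linear relations --- for instance, by repeating one of the basis vectors $\mult_C(\pi)$ times, or by taking arbitrary linear combinations of the basis. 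By Remark~\ref{about abelian}, each such vector determines an indecomposable extension of $G$ by $C$ up to isomorphism; pick one representative $\eta_i$ for each and put them into the family. By construction, $\Span_{F_C}(\eta_i)_{i \in I_C} = \supp_C(\pi)$ and the relation dimension of $(\eta_i)_{i \in I_C}$ equals $\mult_C(\pi)$, which secures conditions~(b) and~(c).

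With the family $(\eta_i)_{i \in I}$ thus assembled, form the fiber product $\eta_I \colon \fprod{i \in I} H_i \onto G$. By design all three conditions of Theorem~\ref{fundamental mult supp} are met, so $\pi \isom_G \eta_I$, completing the forward direction.

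The one step that requires genuine care is the abelian construction: one must simultaneously match a prescribed span in $H^2(G,C)$ and a prescribed relation dimension, while keeping in mind that extensions correspond to points of $H^2(G,C)/F_C^\times$ rather than to vectors themselves, and that the cardinal $\mult_C(\pi)$ may be infinite. This is routine linear algebra over the finite field $F_C$, but it is the only place where anything more than bookkeeping occurs; everything else is a direct application of Theorem~\ref{fundamental mult supp} and Lemma~\ref{fiber product is fundamental}.
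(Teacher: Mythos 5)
Your proof is correct and takes essentially the same route as the paper's own (commented-out) argument: read off the family from the invariants of Theorem~\ref{fundamental mult supp}---$\mult_\eta(\pi)$ copies of each $\eta \in \Lambda_\na(G)$, and for each $C \in \Lambda_\ab(G)$ a basis of $\supp_C(\pi)$ padded by $\mult_C(\pi)$ further extensions---and then invoke the theorem's iff criterion, while the backward direction is Lemma~\ref{fiber product is fundamental}. The one place where the paper is cleaner: it pads with $\mult_C(\pi)$ copies of the split extension $\sigma_C^{(0)}$, which unambiguously leaves the span equal to $\supp_C(\pi)$ and makes the relation dimension exactly $\mult_C(\pi)$, even when $\supp_C(\pi) = 0$. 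Your suggestion of ``repeating one of the basis vectors'' silently assumes $\supp_C(\pi) \ne 0$, which need not hold (e.g., when all $\eta_i$ with $i \in I_C$ are split); your fallback of ``arbitrary linear combinations of the basis'' does cover the degenerate case since the empty combination is the zero vector, i.e.\ $\sigma_C^{(0)}$, but you should say so explicitly rather than leave the choice unspecified.
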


\begin{comment}
\begin{proof}
For every $\eta \in \Lambda_\na(G)$
let $I_\eta$ be a set of cardinality $\mult_\eta(\pi)$
and let $\eta_i = \eta$ for every $i \in I_\eta$.
For every $C \in \Lambda_\ab(G)$
extend a basis
$\{\eta_i\}_{i \in I_C'}$
of $\supp_C(\pi)$
to a family
$\{\eta_i\}_{i \in I_C}$
by adding $\mult_C(\pi)$ copies of $\sigma_C^{(0)}$.
Put
$
I =
\bigdotcup_{\eta \in \Lambda_\na(G)} I_\eta
\dotcup
\bigdotcup_{C \in \Lambda_\ab(G)} I_C
$.
By Theorem~\ref{fundamental mult supp},
$\pi$ is isomorphic to the fiber product of
$(\eta_i)_{i \in I}$.
\end{proof}
\end{comment}

\begin{Theorem}[{\cite[Corollary 8.10]{fund}}]
\label{compare}
%\label{reinterpret G as G'}
Let $\tau \colon H \onto G$
and
$\tau' \colon H' \onto G$
be fundamental.
Then:
\begin{itemize}
\item[(i)]
$\tau' \mle \tau$
if and only if
\begin{itemize}
\item[(i1)]
$\mult_\lambda(\tau') \le \mult_\lambda(\tau)$,
for every $\lambda \in \Lambda(G)$;
and
\item[(i2)]
$\supp_C(\tau') \subseteq \supp_C(\tau)$,
for every $C \in \Lambda_\ab(G)$.
\end{itemize}

\item[(ii)]
$\tau' \isom_G \tau$
if and only if
\begin{itemize}
\item[(ii1)]
$\mult_\lambda(\tau') = \mult_\lambda(\tau)$,
for every $\lambda \in \Lambda(G)$;
and
\item[(ii2)]
$\supp_C(\tau') = \supp_C(\tau)$,
for every $C \in \Lambda_\ab(G)$.
\end{itemize}
\end{itemize}
\end{Theorem}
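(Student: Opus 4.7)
Plan: Use Lemma~\ref{fiber product is fundamental} to realize $\tau \isom_G \eta_I$ and $\tau' \isom_G \eta'_J$ as compact fiber products of families of indecomposable extensions, and use Theorem~\ref{fundamental mult supp} to translate (i1), (i2), (ii1), (ii2) into statements about the families: the multiplicities $\mult_\eta$ become cardinalities $|I_\eta|, |J_\eta|$ for $\eta \in \Lambda_\na(G)$, the multiplicities $\mult_C$ become the relation dimensions of $(\eta_i)_{i \in I_C}$ and $(\eta'_j)_{j \in J_C}$ for $C \in \Lambda_\ab(G)$, and the supports become their spans in $H^2(G, C)$.

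For the easier direction $(\Leftarrow)$ of (i), assume (i1) and (i2), and prove $\eta'_j \mle \tau$ for each $j \in J$ via Proposition~\ref{indecomposable quotients of a fiber product pi is id}. A non-abelian $\eta'_j$ (with $j \in J_\eta$) falls into case (a), since (i1) forces $|I_\eta| \ge |J_\eta| \ge 1$. A non-split abelian $\eta'_j$ lies in $\supp_C(\tau') \subseteq \supp_C(\tau) = \Span(\eta_i)_{i \in I_C}$ as a nontrivial linear combination (case (b)). A split $\eta'_j$ contributes the basis vector $e_j$ to the relation space of $(\eta'_j)_{j \in J_C}$, forcing $\mult_C(\tau') \ge 1$, so (i1) gives $\mult_C(\tau) \ge 1$, providing a nontrivial relation among $(\eta_i)_{i \in I_C}$ that expresses $0 = \eta'_j$ (again case (b)). The resulting epimorphisms $\psi_j \colon H \onto H'_j$ with $\eta'_j \circ \psi_j = \tau$ assemble, via the universal property of the fiber product and the compactness of $H' = \fprod{j \in J} H'_j$ (inherited from $\tau'$ being fundamental), into the required surjection $\psi \colon H \onto H'$ with $\tau' \circ \psi = \tau$.

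For $(\Rightarrow)$, let $\psi \colon H \onto H'$ realize $\tau' \mle \tau$ and set $\eps_j = \pr_j \circ \psi$, so that $\eta'_j \circ \eps_j = \tau$. In the non-abelian case, Proposition~\ref{indecomposable quotients of a fiber product pi is id}(a') assigns each $j \in J_\eta$ a unique $i_j \in I_\eta$ with $\Ker \eps_j = \Ker \pr_{I, i_j}$; to see that $j \mapsto i_j$ is injective, if $i_j = i_{j'}$ for distinct $j, j' \in J_\eta$, the equality of kernels produces an isomorphism $\phi \colon H'_j \to H'_{j'}$ with $\phi \circ \pr_j = \pr_{j'}$ on $H'$, and testing this on the element $(x_k)_k$ of the fiber product with $x_j = 1$, $x_{j'} = y \in \Ker \eta'_{j'}$ arbitrary, and $x_k = 1$ otherwise, forces $y = 1$, contradicting indecomposability of $\eta'_{j'}$. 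This gives $\mult_\eta(\tau') \le \mult_\eta(\tau)$. For the abelian case, (b') provides $\eps'_j \colon \fprod{i \in I_C} H_i \onto H'_j$ with $\eta'_j \circ \eps'_j = \eta_{I_C}$; combining these yields a surjection whose restriction to kernels is a $G$-module map $C^{|I_C|} \onto C^{|J_C|}$, represented by a surjective matrix $T \in M_{|J_C| \times |I_C|}(F_C)$ satisfying $\eta'_j = \sum_i T_{ji} \eta_i$. From this, $\supp_C(\tau') \subseteq \supp_C(\tau)$ is immediate, and the injective dual $T^{\vee} \colon F_C^{J_C} \hookrightarrow F_C^{I_C}$ maps relations of $(\eta'_j)$ to relations of $(\eta_i)$, so $\mult_C(\tau') \le \mult_C(\tau)$.

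Part (ii) follows from (i) by symmetry, applied to both $\tau' \mle \tau$ and $\tau \mle \tau'$. The main obstacle I expect is the abelian half of $(\Rightarrow)$: producing the surjective matrix $T$ requires assembling the individual $\eps'_j$'s into a surjection onto $\fprod{j \in J_C} H'_j$, which in turn needs a compactness statement for the $C$-isotypic sub-structure of the fundamental cover $\tau'$ that is not obviously inherited from compactness of $H'$. The most natural workaround is to bypass the sub-fiber-product and work directly with the restriction $\psi|_{\Ker \tau} \colon \Ker \tau \onto \Ker \tau'$, extracting the $C$-isotypic abelian quotient intrinsically as a surjection of $G$-modules, from which $T$ can be recovered without picking apart any fiber-product pieces.
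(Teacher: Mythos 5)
The paper does not prove this statement at all: it is imported as \cite[Corollary~8.10]{fund} and used as a black box. So there is no in-paper proof to compare against; I can only assess your argument on its own merits, and it has genuine gaps.

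The central problem is in the $(\Leftarrow)$ direction of (i), where you invoke ``the compactness of $H' = \fprod{j \in J} H'_j$ (inherited from $\tau'$ being fundamental).'' Fundamentality (i.e.\ $\calM(\tau')=1$) does \emph{not} force a chosen fiber-product presentation to be compact. Already $\tau' = \eta^{(2)}$ for a single indecomposable $\eta$ is a counterexample: $\eta^{(2)}$ is fundamental by Lemma~\ref{fiber product is fundamental}, yet the diagonal copy of $H$ inside $H \times_G H$ is a proper closed subgroup projecting onto both coordinates, so the presentation is not compact. Consequently, producing \emph{some} dominations $\psi_j \colon H \onto H'_j$ does not yield a surjection $H \onto H'$; this is exactly the failure mode that the diagonal illustrates ($\psi_1 = \psi_2$ assemble to a non-surjective map). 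The standard repair is to use Theorem~\ref{fundamental mult supp} to pick the two indexing families \emph{compatibly} — e.g.\ choose $I$ for $\tau$ from its invariants, then take $J \subseteq I$ realizing the invariants of $\tau'$ (this is where (i1), (i2), and the relation-dimension bookkeeping enter) — and then $\pr_{I,J}$ furnishes the domination directly, with no compactness needed.

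Your treatment of (ii) also has a hole. From (ii1), (ii2) and part (i) you get $\tau' \mle \tau$ and $\tau \mle \tau'$, but mutual domination of profinite extensions of $G$ does not immediately give $\tau' \isom_G \tau$: that Cantor--Schr\"oder--Bernstein statement is a nontrivial result of its own (cf.\ \cite[Corollary~7.3]{fund}, which the present paper cites for exactly this step in Lemma~\ref{fec is unique}). The cleaner route to (ii)$(\Leftarrow)$ bypasses this entirely: from the common invariants build one family $(\eta_i)_{i\in I}$ via Theorem~\ref{fundamental mult supp}; both $\tau$ and $\tau'$ are then $G$-isomorphic to $\eta_I$, hence to each other. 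Finally, you are right that the abelian half of your $(\Rightarrow)$ argument is not closed — assembling the $\eps'_j$ into a single surjective matrix $T$ over $F_C$ needs more than you provide, since the same non-compactness issue blocks the naive assembly there as well.
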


From Theorem~\ref{compare} one deduces:

\begin{Corollary}
[{\cite[Corollary 8.11]{fund}}]
%\label{dominates multiple of one}
\label{dominates multiple of one}
Let $\pi \colon H \onto G$ be fundamental.
Let $\eta$ be an indecomposable cover of $G$
and $\kappa \ge 0$ a cardinality.
Let $\eta^{(\kappa)}$ be the fiber product of $\kappa$ copies of $\eta$.
Put $C = \Ker \eta$.
\begin{itemize}
\item[(a)]
If $C$ is not abelian, then
$\eta^{(\kappa)} \mle \pi$ 
if and only if
$\kappa \le \mult_\eta(\pi)$.

\item[(b)]
If $C \in \Lambda_\ab(G)$, then
$\eta^{(\kappa)} \mle \pi$ 
if and only if
\begin{equation*}
\begin{cases}
\kappa \le 
\mult_C(\pi)&
\textnormal{if $\eta \in \supp_C(\pi)$ splits}
\\
\kappa \le 
\mult_C(\pi) + 1 &
\textnormal{if $\eta \in \supp_C(\pi)$ does not split}
\\
\kappa = 
0 &
\textnormal{if $\eta \notin \supp_C(\pi)$}
\end{cases}
\end{equation*}
\end{itemize}
\end{Corollary}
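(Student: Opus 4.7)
The plan is to apply Theorem~\ref{compare}(i) with $\tau = \pi$ and $\tau' = \eta^{(\kappa)}$. The first step is to observe that $\eta^{(\kappa)}$, being by definition the fiber product of $\kappa$ copies of the indecomposable extension $\eta$, is fundamental by Lemma~\ref{fiber product is fundamental}. Hence Theorem~\ref{compare}(i) reduces the question to comparing $\mult_\lambda$ and $\supp_C$ of $\eta^{(\kappa)}$ against those of $\pi$, and I compute these via Theorem~\ref{fundamental mult supp} applied to the constant family $\eta_i = \eta$ on an index set $I$ of cardinality $\kappa$.

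In case (a), where $C = \Ker \eta$ is non-abelian, this computation yields $\mult_\eta(\eta^{(\kappa)}) = \kappa$, while $\mult_\lambda(\eta^{(\kappa)}) = 0$ for every other $\lambda \in \Lambda(G)$ and $\supp_{C'}(\eta^{(\kappa)}) = 0$ for every $C' \in \Lambda_\ab(G)$. All inclusions of Theorem~\ref{compare}(i2) are automatic, and all inequalities of (i1) are automatic except $\kappa = \mult_\eta(\eta^{(\kappa)}) \le \mult_\eta(\pi)$, which is the claimed condition.

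In case (b), where $C \in \Lambda_\ab(G)$, the only potentially nontrivial invariants of $\eta^{(\kappa)}$ are $\supp_C(\eta^{(\kappa)})$, equal to the $F_C$-span of $\kappa$ copies of $\eta$, and $\mult_C(\eta^{(\kappa)})$, the relation dimension of that family. If $\eta$ splits, then $\eta = 0 \in H^2(G,C)$, so $\supp_C(\eta^{(\kappa)}) = 0$ (automatically contained in $\supp_C(\pi)$) and the map $T \colon F_C^\kappa \to H^2(G,C)$ of Notation~\ref{Lambda} is zero, giving $\mult_C(\eta^{(\kappa)}) = \kappa$; the remaining inequality is $\kappa \le \mult_C(\pi)$. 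If $\eta$ does not split and $\kappa \ge 1$, then $T$ has rank one, so $\mult_C(\eta^{(\kappa)}) = \kappa - 1$ and $\supp_C(\eta^{(\kappa)}) = F_C \cdot \eta$; the support inclusion then reads $\eta \in \supp_C(\pi)$, and granting this, (i1) becomes $\kappa - 1 \le \mult_C(\pi)$, i.e.\ $\kappa \le \mult_C(\pi) + 1$. Finally, if $\eta \notin \supp_C(\pi)$ and $\eta$ does not split, the support inclusion fails for every $\kappa \ge 1$, forcing $\kappa = 0$.

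The only calculations requiring care are the rank-one versus rank-zero dichotomy in the relation dimension, and the observation that the split extension $\sigma_C^{(0)} = 0$ lies in every $F_C$-subspace of $H^2(G,C)$, so that the hypothesis ``$\eta \in \supp_C(\pi)$ splits'' in the statement is simply a rephrasing of ``$\eta$ splits''. With these in hand, the three displayed lines of (b) are exactly the three cases of the support/multiplicity bookkeeping, and the corollary follows.
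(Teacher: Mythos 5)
Your proof is correct and follows exactly the route the paper indicates: the paper states this corollary with only the remark ``From Theorem~\ref{compare} one deduces,'' and your argument carries out precisely that deduction by computing the invariants $\mult_\lambda(\eta^{(\kappa)})$ and $\supp_C(\eta^{(\kappa)})$ via Theorem~\ref{fundamental mult supp} applied to the constant family and then invoking Theorem~\ref{compare}(i). The rank-zero versus rank-one dichotomy for the relation dimension and the reading of ``$\eta \in \supp_C(\pi)$ splits'' as ``$\eta$ splits'' (since the split class is $0$) are both handled correctly, including the cardinal-arithmetic edge cases.
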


\begin{Remark}\label{hereditary}
Let $\pi \colon H \onto G$
and $\pi' \colon H' \onto G$
be two epimorphisms.
Let $\pi_1$ and $\pi_1'$ be their fundaments.
By \cite[Theorem~9.4(a)]{fund},
%\label{two fundament series}
$\pi' \mle \pi \implies \pi_1' \mle \pi_1$.
In particular,
if $\pi'$ is fundamental,
then $\pi'_1 = \pi'$,
so 
$\pi' \mle \pi \iff \pi' \mle \pi_1$.
\end{Remark}

\begin{Lemma}\label{quotient of fprod not I}
Let 
$(\eta_i \colon H_i \onto G)_{i \in I}$
be an infinite family
$(\eta_i)_{i \in I}$
of isomorphic indecomposable epimorphisms.
Let $k \in I$ and let 
$J = I \smallsetminus \{k\}$.
Then
$\pr_{I,J} \colon \fprod{i \in I}H_i \onto \fprod{i \in J}H_i$
is indecomposable
and there is no
compact cartesian square
\begin{equation}\label{compact with pr}
\xymatrix{
\fprod{i \in I}H_i
\arr[r]^{\pr_{I,J}} \arr[d]^{\beta}
& \fprod{i \in J}H_i
\arr[d]^{\phi}
\\
B \arr[r]^{\alpha} & A
}
\end{equation}
with
$A, B$ finite.
\end{Lemma}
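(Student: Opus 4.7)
The plan is to handle the two assertions separately, the second by constructing a proper closed subgroup that violates compactness.

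For the indecomposability of $\pr_{I,J}$, the key point is that $I = J \dotcup \{k\}$ gives a canonical identification $\fprod{i \in I} H_i \isom H_k \times_G \fprod{i \in J} H_i$, which packages $\pr_{I,J}$ into a cartesian square with bottom arrow $\eta_k$, left arrow the coordinate projection $\pr_{I,k}$, and top arrow $\pr_{I,J}$. Since $\eta_k$ is indecomposable by hypothesis, \cite[Corollary~2.2]{fund} (quoted just after \eqref{basic}) transfers indecomposability across the square to $\pr_{I,J}$.

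For the second claim I would argue by contradiction. Assume a compact cartesian square as in \eqref{compact with pr} with $A,B$ finite. Since all $\eta_i$ are isomorphic to $\eta_k$, fix for each $j$ an isomorphism $\iota_j \colon H_k \to H_j$ with $\eta_j \circ \iota_j = \eta_k$, and for any $j^* \in I \smallsetminus \{k\}$ consider the ``diagonal'' subgroup
\[ H' \defeq \bigl\{(h_i) \in \fprod{i \in I} H_i \st h_{j^*} = \iota_{j^*}(h_k)\bigr\}. \]
Writing $C \defeq \ker \eta_k$, the map $(h_i) \mapsto h_{j^*}^{-1}\iota_{j^*}(h_k)$ is a continuous epimorphism $\fprod{i \in I} H_i \to C_{j^*}$ with kernel $H'$, so $H'$ is closed of index $|C|$; since $\eta_k$ is not an isomorphism, $C \ne 1$ and $H'$ is proper. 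Also $\pr_{I,J}(H') = \fprod{i \in J} H_i$, because any target tuple lifts into $H'$ by setting $h_k \defeq \iota_{j^*}^{-1}(h_{j^*})$.

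The main technical step, and the key obstacle, is choosing $j^*$ so that $\beta(H') = B$. Denote by $\tilde{C}_i \subset \fprod{i \in I} H_i$ the copy of $C_i = \ker \eta_i$ supported at the $i$-th coordinate; the goal reduces to finding $j^* \ne k$ with $\tilde{C}_{j^*} \subseteq \ker \beta$. Indeed, if so, then given $(h_i)$ with $\beta((h_i)) = b$, the element $c \defeq h_{j^*}^{-1}\iota_{j^*}(h_k)$ lies in $C_{j^*}$ (a short computation using $\eta_{j^*} \circ \iota_{j^*} = \eta_k$ and the compatibility relations defining $\fprod{i \in I} H_i$), and multiplying $(h_i)$ by the $\tilde{C}_{j^*}$-element with $j^*$-entry $c$ produces a member of $H'$ with unchanged $\beta$-value. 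Such a $j^*$ exists because the restriction of $\beta$ to $N_I \defeq \ker \eta_I = \prod_{i \in I} \tilde{C}_i$ is a continuous homomorphism into the finite group $B$; its kernel is therefore open in the product-topology profinite group $N_I$, hence contains $\prod_{i \in I \smallsetminus F} \tilde{C}_i$ for some finite $F \subset I$, and since $I$ is infinite one may choose $j^* \in I \smallsetminus (F \cup \{k\})$. This yields a proper $H'$ mapping onto both $B$ and $\fprod{i \in J} H_i$, contradicting compactness.
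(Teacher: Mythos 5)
Your proof of indecomposability is exactly the paper's (via the cartesian square with $\eta_k$ on the bottom).

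For the second claim you take a genuinely different route from the paper. The paper factors $\beta$ and $\phi$ through finite levels $\fprod{i \in I'} H_i$ and $\fprod{i \in J'} H_i$, chooses $j \in J \smallsetminus J'$, and uses the isomorphism $H_j \cong H_k$ to build a section $\fprod{i \in J} H_i \onto \fprod{i \in I'} H_i$ of $\pr_{J,J'}$ through $\pr_{I',J'}$, contradicting compactness via Proposition~\ref{indecomposable cartesian square} and Remark~\ref{square trivialities}(b). You instead stay at the infinite level and exhibit an explicit proper closed subgroup $H'$ (a diagonal of the $k$-th and $j^*$-th coordinates) that maps onto both $B$ and $\fprod{i \in J} H_i$; the choice of $j^*$ uses finiteness of $B$ and the product topology on $\ker \eta_I = \prod_i C_i$ (the $C_i$ are finite because indecomposable epimorphisms have finite kernels). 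Both proofs exploit the same ``diagonal'' idea, but yours goes directly to the definition of compactness and avoids the machinery of \cite{fund}, which is arguably cleaner; the paper's version reuses lemmas it has already set up.

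One small inaccuracy: the map $f\colon (h_i) \mapsto h_{j^*}^{-1}\iota_{j^*}(h_k)$ is \emph{not} in general a group homomorphism into $C_{j^*}$ — you get $f(xy) = f(x)^{y_{j^*}} f(y)$ when $y \in H'$, and worse for general $y$ — so ``continuous epimorphism with kernel $H'$, of index $|C|$'' is not justified. But the conclusions you actually use survive without it: $H'$ is the equalizer of the two continuous maps $(h_i) \mapsto h_{j^*}$ and $(h_i) \mapsto \iota_{j^*}(h_k)$, hence a closed subgroup; it is proper because the element supported at $j^*$ with value $c \ne 1$ lies in the fiber product but not in $H'$ (here $C \ne 1$ since $\eta_k$ is not an isomorphism). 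With that cosmetic fix, the argument is correct.
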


\begin{proof}
The fiber products
$\eta_I \colon \fprod{i \in I}H_i \onto G$
and
$\eta_J \colon \fprod{i \in J}H_i \onto G$
are isomorphic,
because $|I| = |J|$.

By \cite[Remark 3.2(e)]{fund}
%\label{properties of fiber product}
the following is a cartesian square
\begin{equation}\label{simple partition}
\xymatrix@=30pt{
\fprod{i \in I}H_i \ar[r]_{\pr_{I,J}} \ar[d]^{\pr_{I,k}}
& \fprod{i \in J}H_i \ar[d]^{\eta_J}\\
H_k \ar[r]_{\eta_k} & G \rlap{,} \\
}
\end{equation}
hence, as $\eta_k$ is indecomposable, so is $\pr_{I,J}$.

Suppose there is a compact cartesian square
\eqref{compact with pr}
with $A, B$ finite.
As $\fprod{i \in I}H_i$ is the inverse limit of
$\fprod{i \in I'}H_i$,
where $I'$ runs through the finite subsets of $I$,
\ $\beta$ factors through $\pr_{I,I'} \colon \fprod{i \in I}H_i \onto
\fprod{i \in I'} H_i$,
for a sufficiently large $I'$.
Similarly
$\phi$ factors through
$\pr_{J,J'} \colon \fprod{i \in J}H_i \onto \fprod{i \in J'} H_i$,
for a sufficiently large finite  subset $J'$ of $J$.
We may assume that
$k \in I'$ and 
$J' = I' \smallsetminus \{k\}$.
Then the above factorizations give a commutative diagram
\begin{equation}%\label{}
\xymatrix@=24pt{
\fprod{i \in I}H_i \arr[r]_{\pr_{I,J}} \arr[d]^{\pr_{I,I'}}
\arr@/_2pc/[dd]_{\beta}
& \fprod{i \in J}H_i \arr[d]_{\pr_{J,J'}} \arr@/^2pc/[dd]^{\phi}
\\
\fprod{i \in I'} H_i \arr[r]^{\pr_{I',J'}} \arr[d]^{\beta'}
& \fprod{i \in J'} H_i \arr[d]_{\phi'}
\\
B \arr[r]^{\alpha} & A
}
\end{equation}
in which the upper square is cartesian.
%because
%$\pr_{I,I'}$ maps isomorphically
%$\Ker \pr_{I,J}$ onto $\Ker \pr_{I',J'}$.
By Remark~\ref{square trivialities}(a)
both squares are cartesian;
by Remark~\ref{square trivialities}(b),
both squares are compact.

Let $j \in J \smallsetminus J'$;
then $j \in I \smallsetminus I'$.
By assumption there is an isomorphism
$\theta \colon H_j \to H_k$
such that
$\eta_k \circ \theta = \eta_j$.
For every $i \in I' = J' \dotcup \{k\}$
define
$\gamma_i \colon \fprod{i \in J}H_i \onto H_i$
by
$
\gamma_i = 
\begin{cases}
\pr_{J,i} & \textnormal{if $i \in J'$}\\
\theta \circ \pr_{J,j} & \textnormal{if $i = k$}
\end{cases}
$.
Then $(\gamma_i)_{i \in J}$ define a homomorphism
$\gamma \colon \fprod{i \in J}H_i \to \fprod{i \in I'} H_i$
such that
$\pr_{I',J'} \circ \gamma = \pr_{J,J'}$.
It is easy to see that
$\gamma$ is surjective.
%
%Indeed,
%let $(h'_i)_{i \in I'} \in \fprod{i \in I'} H_i$,
%and let $g = \eta_i(h'_i)$ for every $i \in I'$.
%Define
%$
%h_i = 
%\begin{cases}
%h'_i & \textnormal{if $i \in J'$}\\
%\theta^{-1}(h'_k) \in H_j & \textnormal{if $i = j$}
%\end{cases}
%$
%and for every $i \in J \smallsetminus (J' \cup \{j\})$
%choose $h_i \in H_i$ such that
%$\eta_i(h_i) = g$.
%Then 
%$h = (h_i)_{i \in J} \in \fprod{i \in J} H_i = \Ghat$,
%because
%$\eta_j(h_j) = {(\eta_k \circ \theta)}(\theta^{-1}(h'_k))
%= \eta_k(h'_k) = g$,
%and
%$\gamma(h) = (\gamma_i(h))_{i \in I'} = (h'_i)_{i \in I'}$,
%because
%$\gamma_k(h) = \theta(h_j) = h'_k$.
%
This is a contradiction to the compactness
of the upper square,
by Proposition~\ref{indecomposable cartesian square}.
\end{proof}

\section{Embedding covers and I-covers}\label{emb cover}

\begin{Definition}\label{definition; smallest embedding cover}
(\cite[Definition 24.4.3]{FJ})
Let $G$ and $E$ be profinite groups.
\item [(a)]
$E$ has the \textbf{embedding property}
if for every epimorphism
$\alpha \colon B \onto A$
of finite quotients of $E$
and every $\phi \colon E \onto A$
we have
$\alpha \mle \phi$.
%there is $\gamma \colon E \onto B$
%such that
%$\alpha \circ \gamma = \phi$.
\item [(b)]
An epimorphism $\eps \colon E \onto G$ is an
\textbf{embedding cover} of $G$
if $E$ has the embedding property.
\item [(c)]
An epimorphism $\eta \colon H \onto G$ is an
\textbf{I-cover} of $G$
if every embedding cover of $G$ dominates $\eta$.
\item [(d)]
A\footnote{
The ungrammatical use of the indefinite article 
is justified by our intention 
not to assume the uniqueness of this cover,
precisely because we want to prove it.
}
\textbf{smallest embedding cover} of $G$
is an embedding cover which is also an I-cover.

A smallest embedding cover of a profinite group exists
(\cite[Proposition 27.4.5]{FJ},
cf. Construction~\ref{sec transfinite} below).
It is unique, up to isomorphism, by \cite{Ch}.
The goal of this paper
is to prove this group-theoretically.

\noindent
(e)
Let
$\calE(G)$, resp. $\calE_f(G)$,
be the sets
of isomorphism types of quotients,
resp.~of finite quotients,
of a smallest embedding cover $E$ of $G$.
These sets are uniquely determined by $G$,
by Remark~\ref{EV trivialities}(a) below.
\end{Definition}

\begin{Remark}\label{EV trivialities}
(a)
If $E \onto G$ and $E' \onto G$ are
smallest embedding covers,
then $E, E'$ are quotients of each other.

(b)
Let
$
\xymatrix{
  G_2 \arr[r]^{\pi_2}
& G_1 \arr[r]^{\pi_1}
& G
}
$ be epimorphisms.
If $\pi_1 \circ \pi_2$ is an I-cover,
then so is $\pi_1$.

(c)
The composition
of I-covers is an I-cover.
More generally,
let
\begin{equation*}
\xymatrix{
\cdots \arr[r]^{\pi_3}
& G_2 \arr[r]^{\pi_2}
& G_1 \arr[r]^{\pi_1}
& G_0\rlap{ $ = G$}
}
\end{equation*}
be a sequence of I-covers.
Then its inverse limit $\pi \colon H \onto G$
is an I-cover.
Indeed,
let $\eps \colon E \onto G_0$
be an embedding cover.
By induction there are 
$\psi_k \colon E \onto G_k$
such that
$\psi_0 = \eps$
and
$\pi_k \circ \psi_k = \psi_{k-1}$,
for every $k \ge 1$.
Then $(\psi_k)_{k}$ defines
$\psi \colon E \onto H$
such that
$\pi \circ \psi = \eps$.

(d)
If $G$ is finite then
its smallest embedding cover is finite
(\cite[p. 573]{FJ}).
Hence, by (a), it is unique.

(e)
Let $\pi \colon \Ghat \onto G$ be an epimorphism.
Let
$\epshat \colon \Ehat \onto \Ghat$
and
$\eps \colon E \onto G$
be smallest embedding covers.
As $\pi \circ \epshat$ is an embedding cover
and $\eps$ is an I-cover,
there is an epimorphism
$\Ehat \onto E$.
Therefore
$\calE(\Ghat) \supseteq \calE(G)$
and
$\calE_f(\Ghat) \supseteq \calE_f(G)$.

If $\pi$ is an I-cover,
then $\pi \circ \epshat$ is 
a smallest embedding cover of $G$,
by (b).
Thus
$\calE(\Ghat) = \calE(G)$
and
$\calE_f(\Ghat) = \calE_f(G)$.
\end{Remark}

%\notetous{This lemma seems not being used.}
%
%\begin{Lemma}\label{compact is I-cover}
%Let
%$(\eta_i \colon H_i \onto G )_{i \in I}$
%be a family of I-covers. 
%If
%$\fprod{i \in I} H_i$
%is compact,
%then
%$\eta_I \colon \fprod{i \in I} H_i \onto G$
%is an I-cover,
%and hence $\fprod{i \in I} H_i \in \calE(G)$.
%\end{Lemma}
%
%\begin{proof}
%Let
%$\eps \colon E \onto G$
%be an embedding cover of $G$.
%For every $i \in I$
%there is
%$\psi_i \colon E \onto H_i$
%such that
%$\eta_i \circ \psi_i = \eps$.
%As
%$\fprod{i \in I} H_i$
%is compact,
%the $\psi_i$ induce
%$\psi \colon E \onto \fprod{i \in I} H_i$
%such that
%$\pr_{I,i} \circ \psi = \psi_i$,
%for every $i \in I$;
%in particular,
%$\eta_I \circ \psi = \eps$.
%So, $\eta_I$ is an I-cover.
%
%The last assertion follows from
%Remark~\ref{EV trivialities}(e).
%\end{proof}

\begin{Lemma}\label{compact is quotient}
Let
$(\eta_i \colon B_i \onto A )_{i \in I}$
be a finite family
with
$A, B_i \in \calE_f(G)$ for every $i \in I$.
If
$\fprod[A]{i \in I} B_i$
is compact,
then
$\fprod[A]{i \in I} B_i \in \calE_f(G)$.
\end{Lemma}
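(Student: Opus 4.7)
The plan is a short direct application of the embedding property, followed by the compactness hypothesis.

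First I would fix a smallest embedding cover $\eps \colon E \onto G$; since $A, B_i \in \calE_f(G)$, each of $A$ and $B_i$ is a finite quotient of $E$, and in particular there is an epimorphism $\phi \colon E \onto A$. Next, for each $i \in I$, the map $\eta_i \colon B_i \onto A$ is an epimorphism of finite quotients of $E$, so the embedding property of $E$ yields $\eta_i \mle \phi$, i.e.\ an epimorphism $\psi_i \colon E \onto B_i$ with $\eta_i \circ \psi_i = \phi$.

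The family $(\psi_i)_{i \in I}$ is compatible over $A$, hence it induces a homomorphism $\psi \colon E \to \fprod[A]{i \in I} B_i$ with $\pr_{I,i} \circ \psi = \psi_i$ for every $i \in I$. Since $I$ is finite and each $B_i$ is finite, $\fprod[A]{i \in I} B_i$ is finite, and the image $\psi(E)$ is a closed subgroup of it that projects onto each $B_i$ (because $\psi_i$ is surjective). By the hypothesis that the fiber product is compact, no proper closed subgroup has this property, so $\psi(E) = \fprod[A]{i \in I} B_i$; that is, $\psi$ is surjective. Hence $\fprod[A]{i \in I} B_i$ is a finite quotient of $E$, so it lies in $\calE_f(G)$.

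I do not expect any real obstacle here: the embedding property is tailor-made to produce the lifts $\psi_i$, and the compactness of the fiber product is tailor-made to upgrade the tuple $(\psi_i)_i$ into a single epimorphism onto the fiber product. The only point worth stating carefully is that this use of compactness is exactly the property noted right after \eqref{concrete}, namely that whenever one has a compatible family of epimorphisms into a compact fiber product, the induced map is itself surjective.
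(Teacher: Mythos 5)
Your proof is correct and takes essentially the same route as the paper: obtain $\phi\colon E\onto A$, lift it through each $\eta_i$ via the embedding property to get $\psi_i\colon E\onto B_i$, and use compactness of the fiber product to conclude the induced map $\psi$ is surjective. The only cosmetic difference is that you explicitly fix a smallest embedding cover, whereas the paper phrases it for an arbitrary embedding cover $E$; either works, since $A,B_i\in\calE_f(G)$ guarantees they are finite quotients of any embedding cover of $G$, and in particular of the smallest one.
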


\begin{proof}
Let $E$ be an embedding cover of $G$.
As $A \in \calE_f(G)$,
there is
$\phi \colon E \onto A$.
As $E$ has the embedding property,
for each $i \in I$ there is
$\psi_i \colon E \onto B_i$
such that
$\eta \circ \psi_i = \phi$.
As $\fprod[A]{i \in I} B_i$ is compact,
the $\psi_i$ define an epimorphism
$\psi \colon E \onto \fprod[A]{i \in I} B_i$.
Thus
$\fprod[A]{i \in I} B_i \in \calE_f(G)$.
\end{proof}

Here is our main source of I-covers:

\begin{Definition}\label{basic cover}
Consider a compact
cartesian square \eqref{basic}
with
$B \in \calE_f(G)$.
Then $\eta$ is an I-cover.

Indeed,
let $\eps \colon E \onto G$ be an embedding cover.
By the embedding property of $E$ there is
$\psi_B \colon E \onto B$
such that $\alpha \circ \psi_B = \phi \circ \eps$.
As \eqref{basic} is compact,
there is
$\psi \colon E \onto H$ such that
$\eta \circ \psi = \eps$
(and $\beta \circ \psi = \psi_B$).

By Remark~\ref{EV trivialities}(e),
$H \in \calE(G)$.
Notice that $\Ker \eta$ is finite,
because $\Ker \eta \isom \Ker \alpha$
and $\Ker \alpha \le B$ is finite.

We call an I-cover $\eta$ of this type \textbf{basic}.
If $B$ is even a finite quotient of $G$
and $\eta$ is indecomposable,
we say that $\eta$ is \textbf{superbasic}.
\end{Definition}

\begin{Remark}\label{nonsplit is basic}
Let $\eta \colon H \onto G$ be an indecomposable epimorphism.
Assume that $H \in \calE(G)$ 
and that $\eta$ does not split.
Then $\eta$ is a basic I-cover.

Indeed,
$\Ker \eta$ is finite,
so there is an open $U \normal H$ such that
$U \cap \Ker \eta = 1$.
Let $B = H/U$ and $A = G/\eta(U)$,
and let 
$\beta \colon H \onto B$ and $\phi \colon G \onto A$
be the quotient maps.
Then there is a cartesian square
\eqref{basic}
in which $\alpha$ is the induced map from $\eta$.
As $H \in \calE(G)$, we have $B \in \calE_f(G)$.

If there were a homomorphism
$\gamma \colon G \to B$ such that $\alpha \circ \gamma = \phi$,
then, as \eqref{basic} is cartesian,
there would exist a homomorphism
$\delta \colon G \to H$
such that
$\eta \circ \delta = \id_{G}$
(and $\beta \circ \delta = \gamma$).
Thus $\eta$ would split, a contradiction.
It follows by Proposition~\ref{indecomposable cartesian square} that
\eqref{basic} is compact.
Hence $\eta$ is basic.
\end{Remark}

\begin{Lemma}\label{embedding covers have no basic covers}
Let $G$ be a profinite group.
The following are equivalent:
\begin{itemize}
\item[(a)]
$G$ has the embedding property.
\item[(b)]
$G$ has no I-covers, except isomorphisms.
\item[(c)]
$G$ has no basic I-covers, except isomorphisms.
\item[(d)]
$G$ has no indecomposable basic I-covers.
\item[(e)]
$G$ has no superbasic I-covers.
\end{itemize}
\end{Lemma}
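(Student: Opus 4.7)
The goal is to verify the cycle (a) $\implies$ (b) $\implies$ (c) $\implies$ (d) $\implies$ (e) $\implies$ (a). The first four implications are essentially formal; the heart of the argument is (e) $\implies$ (a).

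For (a) $\implies$ (b): if $G$ has the embedding property then $\id_G \colon G \onto G$ is itself an embedding cover of $G$, so any I-cover $\eta \colon H \onto G$ is dominated by $\id_G$, yielding $\psi \colon G \onto H$ with $\eta \circ \psi = \id_G$. Since $\psi$ is a surjective epimorphism with a left inverse, it is an isomorphism, hence so is $\eta$. The implications (b) $\implies$ (c) $\implies$ (d) $\implies$ (e) are immediate from the inclusions of cover classes: basic I-covers are I-covers by Definition~\ref{basic cover}, indecomposable epimorphisms are by definition non-isomorphisms, and superbasic I-covers are, by Definition~\ref{basic cover}, indecomposable basic I-covers.

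The main work is (e) $\implies$ (a), which I prove by contrapositive. Suppose $G$ lacks the embedding property, witnessed by an epimorphism $\alpha \colon B \onto A$ between finite quotients of $G$ and an epimorphism $\phi \colon G \onto A$ that does not dominate $\alpha$. Since $B$ is finite, factor $\alpha = \alpha_1 \circ \cdots \circ \alpha_n$ as a composition of indecomposable epimorphisms $\alpha_j \colon C_j \onto C_{j-1}$, with $C_0 = A$ and $C_n = B$; such a refinement exists by taking a maximal chain of $B$-normal subgroups inside $\Ker \alpha$. Let $i$ be the largest index for which $\alpha_1 \circ \cdots \circ \alpha_i$ is dominated by $\phi$ via some lift $\phi_i \colon G \onto C_i$; such $i$ exists and satisfies $0 \le i < n$ because $\phi$ dominates $\id_A$ but not $\alpha$. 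By maximality of $i$, the map $\phi_i$ does not dominate the indecomposable $\alpha_{i+1}$ (any lift would witness dominance of $\alpha_1 \circ \cdots \circ \alpha_{i+1}$ by $\phi$). Build the cartesian square with $\alpha_{i+1}$ and $\phi_i$, giving projections $\beta \colon H \onto C_{i+1}$ and $\eta \colon H \onto G$. By Proposition~\ref{indecomposable cartesian square}, this square is compact and $\eta$ is indecomposable. Since $C_{i+1}$ is a quotient of the finite quotient $B$ of $G$, it is itself a finite quotient of $G$; hence by Definition~\ref{basic cover} the map $\eta$ is a superbasic I-cover, contradicting (e).

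The main obstacle is propagating the information that $\phi$ fails to dominate $\alpha$ down to the concrete claim that some $\phi_i$ fails to dominate an \emph{indecomposable} factor $\alpha_{i+1}$; this is what the choice of maximal $i$ is designed to achieve. Once that reduction is in place, Proposition~\ref{indecomposable cartesian square} simultaneously yields compactness of the square and indecomposability of $\eta$, and the check that $C_{i+1}$ is a finite quotient of $G$ is automatic.
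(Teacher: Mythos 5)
Your proposal is correct and takes essentially the same approach as the paper: both reduce to an indecomposable factor of $\alpha$, form the fiber product with $G$, and invoke Proposition~\ref{indecomposable cartesian square} to get a compact cartesian square over a finite quotient of $G$, hence a superbasic I-cover. The only difference is cosmetic—you phrase (e)~$\implies$~(a) contrapositively and locate the first failing factor $\alpha_{i+1}$ explicitly, while the paper proves the direct implication after reducing to $\alpha$ indecomposable; the construction and the key lemma invoked are the same.
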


\begin{proof}
(a) $\implies$ (b):
Let $\eta \colon H \onto G$ be an I-cover.
There is $\psi \colon G \onto H$
such that
$\eta \circ \psi = \id_G$.
Therefore $\eta$ is an isomorphism.

(b) $\implies$ (c) $\implies$ (d) $\implies$ (e):
Trivial.

(e) $\implies$ (a):
Let $\alpha \colon B \onto A$
be an epimorphism of finite quotients of $G$.
We have to show that if
$\phi \colon G \onto A$ is an epimorphism,
there is
$\gamma \colon G \onto B$
with $\alpha \circ \gamma = \phi$.
Writing $\alpha$ as the composition
of finitely many indecomposable epimorphisms
we may assume that $\alpha$ is indecomposable.

Let $H = B \times_A G$.
Then the projection $\eta \colon H \onto G$ is indecomposable.
By (e),
$\eta$ is not a basic cover.
So
the cartesian square defined by $H$ is not compact.
Hence,
by Proposition~\ref{indecomposable cartesian square},
$\alpha \mle \phi$.
\end{proof}

\begin{Construction}\label{sec transfinite}
Let us summarize and refine the construction of
a smallest embedding cover
$\eps \colon E \onto G$ of $G$
from
\cite[Proposition 27.4.5]{FJ}:

There is an ordinal $\mu$
and
a transfinite sequence $\{G_\lambda\}_{\lambda \le \mu}$
of profinite groups 
with 
$G = G_0$
and
$E = G_\mu$
and maps
$\pi_{\lambda, \kappa} \colon G_\lambda \onto G_\kappa$,
for all
$0 \le \kappa \le \lambda \le \mu$,
forming an inverse system,
such that
\begin{itemize}
\item[(a)]
If $\lambda < \mu$,
then $G_\lambda$ has a proper I-cover.

(This condition is not assumed in 
\cite[Proposition 27.4.5]{FJ},
but we can achieve it by replacing the cardinal $m$
given there by the largest ordinal $\mu \le m$ with property (a).)
\item[(b)]
If $\lambda \le \mu$ is a non-limit ordinal, then
$\pi_{\lambda, \lambda-1} \colon G_\lambda \onto G_{\lambda-1}$
is
a proper finite I-cover of $G_{\lambda-1}$.
This I-cover is arbitrarily chosen
in
\cite[Proposition 27.4.5]{FJ}.
Hence,
by Lemma~\ref{embedding covers have no basic covers},
we may 
(and shall)
assume that it is superbasic.

\item[(c)]
If $\lambda \le \mu$ is a limit ordinal, then
$G_\lambda = \varprojlim_{\kappa < \lambda} G_\kappa$
with the projections
$\pi_{\lambda, \kappa} \colon G_\lambda \onto G_\kappa$
of the inverse limit.
\end{itemize}
\end{Construction}

\section{Basic covers}\label{section basic}

We explore properties of
basic covers,
defined in Definition~\ref{basic cover}.

\begin{Lemma}\label{change origin}
Let $\eta \colon H \onto G$
be a basic cover
defined by a compact cartesian square \eqref{basic}.
Then for every open $N \normal G$ contained in $\Ker \phi$
there is a commutative diagram \eqref{raise}
with compact cartesian squares,
$\Ker \phi' = N$,
and
$B' \in \calE_f(G)$.
\end{Lemma}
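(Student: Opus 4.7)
The plan is to set $A' := G/N$ with quotient map $\phi' \colon G \onto A'$, observe that $N \subseteq \Ker\phi$ forces a unique factorization $\phi = \pi \circ \phi'$ for some $\pi \colon A' \onto A$ (and $\Ker\phi' = N$), and then build a suitable $B' \in \calE_f(G)$ so that $H' := B' \times_{A'} G$ does the job. Since $\alpha \circ \beta = \phi \circ \eta = \pi \circ \phi' \circ \eta$, the pair $(\beta, \phi' \circ \eta) \colon H \to B \times A'$ factors through the fiber product $B \times_A A'$, and I define $B'$ to be its image. The coordinate projections restrict to epimorphisms $\gamma \colon B' \onto B$ and $\alpha' \colon B' \onto A'$ satisfying $\alpha \circ \gamma = \pi \circ \alpha'$. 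Because $H \in \calE(G)$ by Definition~\ref{basic cover} and $B'$ is a finite quotient of $H$, we obtain $B' \in \calE_f(G)$.

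Next, form the cartesian square given by $\alpha'$ and $\phi'$, namely $H' = B' \times_{A'} G$ with projections $\beta' \colon H' \onto B'$ and $\eta' \colon H' \onto G$. Define $\zeta \colon H \to H'$ by $h \mapsto ((\beta(h), \phi' \eta(h)), \eta(h))$; by construction $\eta' \circ \zeta = \eta$ and $\beta' \circ \zeta$ is the defining surjection $H \onto B'$. A direct check shows $\zeta$ is surjective: given $((b,a'),g) \in H'$, one has $\alpha(b) = \pi \alpha'(b,a') = \pi(a') = \pi \phi'(g) = \phi(g)$, so the original cartesian property $H = B \times_A G$ produces $h \in H$ with $\beta(h) = b$ and $\eta(h) = g$, and then $\zeta(h) = ((b,a'),g)$. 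These maps assemble the diagram \eqref{raise}; compactness of the new square follows because any closed $E' \le H'$ surjecting onto both $B'$ and $G$ has $\zeta^{-1}(E') \le H$ surjecting onto both $B$ (via $\beta = \gamma \circ \beta' \circ \zeta$) and $G$ (via $\eta = \eta' \circ \zeta$), so by compactness of the original $\zeta^{-1}(E') = H$ and thus $E' = \zeta(H) = H'$. Compactness of the remaining square in \eqref{raise} is then automatic from Remark~\ref{square trivialities}(b).

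The one non-routine ingredient is the choice of $B'$ as the image of $(\beta, \phi' \circ \eta)$ rather than as the full pullback $B \times_A A'$, which in general need not lie in $\calE_f(G)$. Once this image is identified, its membership in $\calE_f(G)$ is immediate from $H \in \calE(G)$, and all the remaining assertions—surjectivity of $\zeta$, cartesianness of the new square, and compactness—are essentially formal consequences of the original compact cartesian square together with the embedding-cover hypothesis on $B$.
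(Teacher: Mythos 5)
Your argument is correct and reproduces, by an explicit hands-on construction, what the paper obtains simply by citing Remark~\ref{square trivialities}(c); your $B'$, namely the image of $(\beta,\phi'\circ\eta)$, is exactly the intermediate group that remark produces, and the key observation that $B'\in\calE_f(G)$ because it is a finite quotient of $H\in\calE(G)$ is the same. Two small points to tighten: to place $H$ itself (rather than $H'$) at the top of \eqref{raise} you should note that $\zeta$ is also injective---indeed $\Ker\zeta\subseteq\Ker\beta\cap\Ker\eta=1$ by cartesianness of \eqref{basic}---and hence an isomorphism; and before invoking Remark~\ref{square trivialities}(b) for the lower square's compactness you need that square to be cartesian, which follows from Remark~\ref{square trivialities}(a) once the top and outer squares are known to be cartesian.
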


\begin{proof}
Let $A' = G/N$.
Let $\phi' \colon G \onto A'$ be the quotient map
and 
$\phi'' \colon A' \onto A$
the induced epimorphism
such that
$\phi = \phi'' \circ \phi'$.
By Remark~\ref{square trivialities}(c)
%\label{square trivialities}(c)
there is a commutative diagram
\begin{equation}\label{raise}
\xymatrix@=24pt{
H \arr[r]_{\eta} \arr[d]^{\beta'}
\arr@/_1pc/[dd]_{\beta}
& G \arr[d]_{\phi'} \arr@/^1pc/[dd]^{\phi}
\\
B' \arr[r]^{\alpha'} \arr[d]^{\beta''} & A' \arr[d]_{\phi''}
\\
B \arr[r]^{\alpha} & A
}
\end{equation}
with all squares cartesian.
By Remark~\ref{square trivialities}(b)
%\label{square trivialities}(b)
they are compact.
As $B$ and $A'$ are finite, so is $B'$.
As \eqref{basic} is compact,
$H \in \calE(G)$,
hence $B'$,
a finite quotient of $H$, 
is in $\calE_f(G)$.
\end{proof}

\begin{Lemma}\label{compose basic}
Let
$\eta' \colon G' \onto G$
and 
$\eta'' \colon G'' \onto G'$.
Then
$\eta' \circ \eta''$ is basic
if and only if
both $\eta'$ and $\eta''$ are basic.
\end{Lemma}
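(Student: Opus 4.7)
The plan is to prove both implications using the cartesian-square calculus (Remark~\ref{square trivialities}) and Lemma~\ref{change origin} to align the defining squares.

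For the easy direction, suppose $\eta'\circ\eta''$ is basic, witnessed by a compact cartesian square with base $B\in\calE_f(G)$. Since its top arrow factors as $\eta'\circ\eta''$, Remark~\ref{square trivialities}(c) decomposes this square horizontally into a cartesian square for $\eta''$ (base $B$) and a cartesian square for $\eta'$ (some base $B_2$), both compact by Remark~\ref{square trivialities}(b). Now $\eta'$ is an I-cover by Remark~\ref{EV trivialities}(b), so $\calE_f(G')=\calE_f(G)$ by Remark~\ref{EV trivialities}(e); hence $B\in\calE_f(G')$ and $\eta''$ is basic. Moreover $B_2$, being a finite quotient of $G'$ which is itself a quotient of the smallest embedding cover of $G$, lies in $\calE_f(G)$, so $\eta'$ is basic.

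For the converse, fix compact cartesian squares: for $\eta'$ with $\phi'\colon G\onto A'$, $\beta'\colon G'\onto B'$, $\alpha'\colon B'\onto A'$ and $B'\in\calE_f(G)$; for $\eta''$ with $\phi''_0\colon G'\onto A''_0$, $\beta''_0\colon G''\onto B''_0$, $\alpha''_0\colon B''_0\onto A''_0$ and $B''_0\in\calE_f(G')=\calE_f(G)$. These two squares need not be compatible, so I would apply Lemma~\ref{change origin} to the $\eta''$-square with the choice $N=\Ker\phi''_0\cap\Ker\beta'$, an open normal subgroup of $G'$ contained in $\Ker\phi''_0$. After refinement, the new base map $\phi''\colon G'\onto A''$ satisfies $\Ker\phi''\le\Ker\beta'$, so $\phi''=\psi\circ\beta'$ for some $\psi\colon B'\to A''$, while $B''\in\calE_f(G)$ still.

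The factoring splits the pullback: $G''=B''\times_{A''}G'\isom(B''\times_{A''}B')\times_{B'}G'$. Setting $C=B''\times_{A''}B'$, this presents the refined $\eta''$-square as a vertical stack of two cartesian squares, both compact by Remark~\ref{square trivialities}(b) (in its vertical transpose). Pasting the upper one horizontally with the $\eta'$-square yields a cartesian square
\[
\xymatrix@=24pt{G''\ar[rr]^{\eta'\circ\eta''}\ar[d] & & G\ar[d]^{\phi'}\\ C\ar[r] & B'\ar[r]^{\alpha'} & A'}
\]
by Remark~\ref{square trivialities}(a), which is compact by Remark~\ref{square trivialities}(b). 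Since $\eta'\circ\eta''$ is an I-cover by Remark~\ref{EV trivialities}(c), $G''$ is a quotient of the smallest embedding cover of $G$, hence its finite quotient $C$ lies in $\calE_f(G)$; thus $\eta'\circ\eta''$ is basic. The main obstacle is this converse direction, specifically the choice $N=\Ker\phi''_0\cap\Ker\beta'$ that aligns the two defining squares via Lemma~\ref{change origin}, after which the rest is standard pullback formalism.
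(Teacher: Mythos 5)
Your proof is correct and follows essentially the same route as the paper: both directions use the same tools, and in particular the crucial converse step is Lemma~\ref{change origin} applied with $N = \Ker\phi''_0 \cap \Ker\beta'$, which is exactly the paper's choice $N' = \Ker\phi'\cap\Ker\psi$. The subsequent gluing differs only cosmetically---the paper also refines the $\eta'$-square so that both squares share the arrow $G'\onto G'/N'$ before concatenating horizontally, whereas you leave the $\eta'$-square untouched and instead split the refined $\eta''$-square vertically through $B'$ using $\phi''=\psi\circ\beta'$---but both gluings are valid applications of Remark~\ref{square trivialities} and land in the same place.
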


\begin{proof}
Assume that
$\eta' \circ \eta''$ is basic.
So there is a compact cartesian square
(\emph{without} the dotted arrows)
\begin{equation}\label{joint}
\xymatrix@=24pt{
G''
\arr[r]_{\eta''} \arr[d]^{\phi''}
& G' \arr[r]_{\eta'} \dotarr[d]^{\phi'}
& G \arr[d]^{\phi} 
\\
A''
\arr@/_1pc/[rr]_{\alpha}
\dotarr[r]^{\alpha''} & A' \dotarr[r]^{\alpha'} & A \\
}
\end{equation}
with $A'' \in \calE_f(G)$.
By Remark~\ref{square trivialities}
%\label{square trivialities}
we can complete the diagram with dotted arrows
such that
all the squares are compact cartesian.
Then 
$A' \in \calE_f(G)$.
Thus 
$\eta'$ and $\eta''$ are basic.

Conversely,
assume that
$\eta'$ and $\eta''$ are basic.
Then
$\eta'$ and $\eta''$ are I-covers,
so by Remark~\ref{EV trivialities}(e),
%so by Remark~\ref{EV trivialities}(e),
$\calE(G'') = \calE(G') = \calE(G)$,
and
there are two compact cartesian squares
(on the left)
\begin{equation*}
%\xymatrix@=24pt{
\xymatrix{
G''
\arr[r]_{\eta''} \arr[d]^{\phi''}
& G' \arr[d]^{\phi'}
\\
A''
\arr[r]^{\alpha''} & A' 
}
\qquad
%\xymatrix@=24pt{
\xymatrix{
G' \arr[r]_{\eta'} \arr[d]^{\psi}
& G \arr[d]^{\phi} 
\\
B \arr[r]^{\alpha'} & A \\
}
\qquad
\qquad
\qquad
\xymatrix@=12pt{
%\xymatrix{
G' \arr[rr]_{\eta'} \arr[d]^{}
\arr@/_1.5pc/[dd]_{\psi}
&& G \arr[d]_{} \arr@/^1.5pc/[dd]^{\phi}
\\
G'/N' \arr[rr]^{} \arr[d]^{\bar\beta} && G/N \arr[d]_{\bar\phi}
\\
B \arr[rr]^{\alpha'} && A
}
\end{equation*}
with $A'', A', B \in \calE_f(G)$.

Let
$N' = \Ker \phi' \cap \Ker \psi$.
This is an open normal subgroup of $G'$
contained in $\Ker \phi'$.
As $\eta'$ maps isomorphically
$\Ker \psi$ onto $\Ker \phi$,
it maps $N'$ isomorphically onto
$N = \eta'(N')$,
an open normal subgroup of $G$
contained in $\Ker \phi$.
Therefore the upper square in the diagram above on the right
is cartesian
(here $\bar\beta$ and $\bar\phi$
are the induced maps that make the diagram commutative).
By Remark~\ref{square trivialities}
it is compact.
Thus
we may replace $\phi$ and $\psi$
by the quotient maps
$G \onto G/N$ and $G' \onto G'/N'$.
By Lemma~\ref{change origin}
we may replace $\phi'$ by $G' \onto G'/N'$.
So we get \eqref{joint}
with right and left compact squares.
By Remark~\ref{square trivialities}
also their concatenation is a compact square.
Thus $\eta' \circ \eta''$ is basic.
\end{proof}

\begin{Proposition}\label{basic subspace}
Let $(\eta_i \colon H_i \onto G)_{i \in I}$
be a family of indecomposable basic covers of $G$
with the same abelian kernel $C$
(as a simple $G$-module).
Let $F_C = \End_G(C)$.
Then:
\begin{itemize}
\item[(a)]
Every $\eta \in \Span_{F_C}(\eta_i \st i \in I) \smallsetminus \{0\}$
is a basic cover of $G$.
\item[(b)]
If $|I| < \infty$ and
 $(\eta_i)_{i \in I}$
is linearly independent over $F_C$,
then $\eta_I \colon \fprod{i \in I} H_i \onto G$
is a basic cover.
\end{itemize}
\end{Proposition}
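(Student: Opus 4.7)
The plan is to prove (b) first and deduce (a) from it. For (a), given
$\eta = \sum_{i \in I} a_i \eta_i \ne 0$ (a finite sum), I choose a finite
$J \subseteq I$ with $(\eta_j)_{j \in J}$ linearly independent over $F_C$ and
$\eta \in \Span_{F_C}(\eta_j)_{j \in J}$. By (b), $\eta_J \colon
\fprod{j \in J} H_j \onto G$ is basic. Since $\eta$ is indecomposable with
abelian kernel $C$ and is a nontrivial linear combination of
$(\eta_j)_{j \in J}$,
Proposition~\ref{indecomposable quotients of a fiber product pi is id} yields
$\eta \mle \eta_J$, so $\eta_J = \eta \circ \psi$ for some $\psi$. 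Applying
Lemma~\ref{compose basic} to this factorization then shows that $\eta$ is basic.

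For (b), write $I = \{1,\ldots,n\}$ and present each $\eta_i$ by a compact
cartesian square $H_i = B_i \times_{A_i} G$ with $B_i \in \calE_f(G)$ and
indecomposable $\alpha_i \colon B_i \onto A_i$ of kernel $C$. The open normal
subgroup $N := \bigcap_i \Ker \phi_i$ is contained in each $\Ker \phi_i$, so
Lemma~\ref{change origin} rewrites all the squares over a common base
$\phi \colon G \onto A := G/N$, yielding compact cartesian squares
$H_i = B_i \times_A G$ with (new) $B_i \in \calE_f(G)$ and indecomposable
$\alpha_i \colon B_i \onto A$. Inflation $H^2(A,C) \to H^2(G,C)$ along $\phi$
is $F_C$-linear and sends $\alpha_i$ to $\eta_i$, so linear independence of
$(\eta_i)$ forces linear independence of $(\alpha_i)$. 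A short induction on
$n$ now shows that $\fprod[A]{i \in I} B_i$ is compact: using
$\fprod[A]{i \in I} B_i = (\fprod[A]{i < n} B_i) \times_A B_n$, the step
reduces to compactness of this two-sided fiber product, and since
$\alpha^{(n)} := \alpha_{I \smallsetminus \{n\}}$ is fundamental
(Lemma~\ref{fiber product is fundamental}) while $\alpha_n$ is indecomposable,
Proposition~\ref{indecomposable cartesian square},
Corollary~\ref{dominates multiple of one} and
Theorem~\ref{fundamental mult supp} together translate compactness into
$\alpha_n \notin \Span_{F_C}(\alpha_i)_{i < n}$, which holds by linear
independence. Lemma~\ref{compact is quotient} then places
$\fprod[A]{i \in I} B_i$ in $\calE_f(G)$.

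What remains is compactness of the cartesian square for $\eta_I$ with
right vertical $\phi$ and bottom horizontal
$\alpha_I \colon \fprod[A]{i \in I} B_i \onto A$. Its kernel
$K := \Ker \eta_I$ is canonically $C^n$ with the diagonal $G$-action, and
the extension class of $\fprod{i \in I} H_i \onto G$ in
$H^2(G,K) \cong H^2(G,C)^n$ is $(\eta_1,\ldots,\eta_n)$. Given
$H \le \fprod{i \in I} H_i$ surjecting onto $G$ via $\eta_I$, one has
$HK = \fprod{i \in I} H_i$, and because $K$ is abelian the subgroup
$D := H \cap K$ is normal in all of $\fprod{i \in I} H_i$. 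Then
$\fprod{i \in I} H_i / D$ is a split extension of $G$ by $K/D$ with
complement $H/D \cong G$, so the image of $(\eta_1,\ldots,\eta_n)$ in
$H^2(G, K/D)$ vanishes. Writing $K/D \cong C^m$ as a $G$-module and encoding
$K \onto K/D$ as a matrix $M \in M_{m \times n}(F_C)$, each row of $M$ is an
$F_C$-linear relation among the $\eta_i$; by linear independence $M = 0$ and
$m = 0$, giving $D = K$, $H \supseteq K$, and $H = \fprod{i \in I} H_i$. The
main obstacle is this last step---translating the splitting of the induced
quotient extension into an $F_C$-linear relation forbidden by hypothesis;
the rest is bookkeeping through change-of-base and the fundament-level
structure theorems of \cite{fund}.
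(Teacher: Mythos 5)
Your proof is correct and follows the same high-level structure as the paper: part (a) reduces to (b) via Proposition~\ref{indecomposable quotients of a fiber product pi is id} and Lemma~\ref{compose basic}; part (b) uses Lemma~\ref{change origin} to place all squares over a common base, $F_C$-linearity of inflation to transfer independence to the $\alpha_i$, compactness of the fiber product and the cartesian square, and Lemma~\ref{compact is quotient}. Where you diverge is in how you establish the two compactness facts: the paper simply cites \cite[Corollary~6.11]{fund} (for compactness of $\fprod[A]{i\in I}B_i$ and $\fprod{i\in I}H_i$) and \cite[Lemma~3.9]{fund} (for compactness of \eqref{center}), while you reprove them for this case --- an induction on $|I|$ via Proposition~\ref{indecomposable cartesian square}, Theorem~\ref{fundamental mult supp} and Corollary~\ref{dominates multiple of one} for the $B$-side, and a direct cohomological argument on the $H$-side (pushing the class $(\eta_1,\dots,\eta_n)\in H^2(G,C^n)$ forward along the quotient $K\onto K/D$ and using linear independence to force $D=K$). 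Both of your substitutes are sound; the cohomological one in particular is clean, and it actually proves the stronger statement that any $H\le\fprod{i\in I}H_i$ surjecting onto $G$ alone must be all of $\fprod{i\in I}H_i$, which gives compactness of the cartesian square a fortiori. The net effect is a somewhat more self-contained argument at the cost of redoing work the paper delegates to the external reference.
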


\begin{proof}
(a)
Every $\eta \in \Span_{F_C}(\eta_i \st i \in I)$
is a linear combination of finitely many $\eta_i$.
Therefore we may assume that $I$ is finite.
Replacing
$(\eta_i)_{i \in I}$
by a maximal linearly independent subfamily
we may assume that
the $\eta_i$ are linearly independent over $F_C$.
Thus we have to show that
a non-trivial linear combination $\eta$ of the $\eta_i$
is a basic cover of $G$.
By
Proposition~\ref{indecomposable quotients of a fiber product pi is id},
$\eta$ is dominated by
$\eta_I \colon \fprod{i \in I} H_i \onto G$.
So, by Lemma~\ref{compose basic},
it suffices to show (b).

(b)
By Lemma~\ref{change origin}
we may assume that
there are compact cartesian squares \eqref{left}
with $B_i \in \calE_f(G)$,
for every $i \in I$,
with the same $A$ and $\phi$.
Then $\eta_i = \Inf_\phi(\alpha_i)$,
for every $i \in I$.
As
$\Inf_\phi \colon H^2(A,C) \to H^2(G,C)$
is an $F_C$-linear map
and the $\eta_i$ are linearly independent,
so are the $\alpha_i$.

\begin{comment}
\begin{subequations}
%\begin{tabularx}{\textwidth}{Xp{.1cm}X}
%\begin{tabularx}{\textwidth}{Xp{.1cm}Xp{.1cm}X}
\begin{tabularx}{0.92\textwidth}{Xp{.1cm}Xp{.1cm}X}
\begin{equation}\label{left}
\xymatrix@=34pt{
H_i \arr[r]_{\eta_i} \arr[d]^{\beta_i} & G \arr[d]^{\phi}
\\
B_i \arr[r]^{\alpha_i} & A \\
}
\end{equation}
& &
\begin{equation}\label{center}
\xymatrix@=20pt{
\fprod{i \in I} H_i
\arr[d]^{\prod \beta_i}
\arr[rr]^{\eta_I}
& & G \arr[d]^{\phi} 
\\
\fprod[A]{i \in I} B_i
\arr[rr]_{\alpha_I}
& & A \\
}
\end{equation}
& &
\vspace{-.8cm}
\begin{equation}\label{right}
\xymatrix@=18pt{
\fprod{i \in I} H_i
\arr[r]_{\zeta} \arr[d]^{\prod \beta_i}
\arr@/^1pc/[rr]^{\eta_I}
& H \arr[r]_{\eta} \arr[d]_{\beta}
& G \arr[d]^{\phi} 
 %\dotarr[dl]^{\phi'} 
\\
\fprod[A]{i \in I} B_i
\arr@/_1pc/[rr]_{\alpha_I}
\arr[r]^{\gamma} & B \arr[r]^{\alpha} & A \\
}
\end{equation}
\end{tabularx}
\end{subequations}
\end{comment}
\begin{subequations}
%\begin{tabularx}{\textwidth}{Xp{.1cm}X}
%\begin{tabularx}{\textwidth}{Xp{.1cm}Xp{.1cm}X}
\begin{tabularx}{0.85\textwidth}{Xp{.1cm}Xp{.1cm}Xp{.1cm}X}
\begin{equation}\label{single}
\xymatrix@=34pt{
H \arr[r]_{\eta} \arr[d]^{\beta} & G \arr[d]^{\phi}
\\
B \arr[r]^{\alpha} & A \\
}
\end{equation}
& &
\begin{equation}\label{left}
\xymatrix@=34pt{
H_i \arr[r]_{\eta_i} \arr[d]^{\beta_i} & G \arr[d]^{\phi}
\\
B_i \arr[r]^{\alpha_i} & A \\
}
\end{equation}
& &
\begin{equation}\label{center}
\xymatrix@=20pt{
\fprod{i \in I} H_i
\arr[d]^{\prod \beta_i}
\arr[r]^{\eta_I}
& G \arr[d]^{\phi} 
\\
\fprod[A]{i \in I} B_i
\arr[r]_{\alpha_I}
& A \\
}
\end{equation}
& &
\vspace{-.8cm}
\begin{equation}\label{right}
\xymatrix@=18pt{
\fprod{i \in I} H_i
\arr[r]_{\zeta} \arr[d]^{\prod \beta_i}
\arr@/^1pc/[rr]^{\eta_I}
& H' \arr[r]_{\eta'} \arr[d]_{\beta'}
& G \arr[d]^{\phi} 
 %\dotarr[dl]^{\phi'} 
\\
\fprod[A]{i \in I} B_i
\arr@/_1pc/[rr]_{\alpha_I}
\arr[r]^{\gamma} & B' \arr[r]^{\alpha'} & A \\
}
\end{equation}
\end{tabularx}
\end{subequations}

By \cite[Corollary~6.11]{fund},
%\label\ref{compact of indec},
$\fprod{i \in I} H_i$ 
and
$\fprod[A]{i \in I} B_i$
are compact fiber products.
By \cite[Lemma~3.9]{fund},
%\label{ff},
\eqref{center} is
a compact cartesian square.
By Lemma~\ref{compact is quotient},
%\label{compact is quotient},
$\fprod[A]{i \in I} B_i \in \calE_f(G)$.
Thus $\eta_I$ is basic.
\end{proof}

%\notetous{Not needed}
%
%\begin{Corollary}%\label{quotients are basic}
%Let $(\eta_i \colon H_i \onto G)_{i \in I}$
%be a family of indecomposable basic covers of $G$
%and let 
%$\pi \colon G' \onto G$
%be indecomposable, dominated by
%$\eta_I \colon \fprod{i \in I} H_i \onto G$.
%Then
%$\pi$ is either split with abelian kernel or basic.
%\end{Corollary}
%
%%such that
%%$\pi \circ \eps = \eta_I$,
%
%\begin{proof}
%Let $C = \Ker \pi$.
%By
%Proposition~\ref{indecomposable quotients of a fiber product pi is id}
%either $C$ is non-abelian
%and there is $j \in I$
%such that
%$\pi$ is isomorphic to $\eta_j$
%or
%$C$ is abelian
%and $\pi$ is
%a nontrivial linear combination
%of $(\eta_i)_{i \in I_C}$,
%where $I_C = \{i \in I \st \Ker \eta_i \isom_G C\}$,
%over $F_C = \End_G(C)$.
%In the first case
%$\pi$ is basic, because $\eta_j$ is basic.
%In the second case
%we may assume,
%by
%Proposition~\ref{indecomposable quotients of a fiber product pi is id}(b'),
%that $I = I_C$.
%Hence, by Proposition~\ref{basic subspace}(a),
%$\pi$ is either split or basic.
%\end{proof}

\begin{Proposition}\label{fprod of basics is basic}
Let
$(\eta_i \colon H_i \onto G)_{i \in I}$ be
a finite family of indecomposable epimorphisms.
Then
$\eta_I \colon \fprod{i \in I} H_i \onto G$
is basic
if and only if
$\fprod{i \in I} H_i \in \calE(G)$
and
every indecomposable cover of $G$
dominated by $\eta_I$
is basic.
\end{Proposition}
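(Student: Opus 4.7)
Assume $\eta_I$ is basic. Then $\fprod{i \in I} H_i \in \calE(G)$ is immediate from Definition~\ref{basic cover}. For an indecomposable $\zeta \colon E \onto G$ with $\zeta \mle \eta_I$, I would apply Proposition~\ref{indecomposable quotients of a fiber product pi is id}. If $\Ker\zeta$ is non-abelian, then $\zeta \isom_G \eta_j$ for some $j \in I$; from the factorization $\eta_I = \eta_j \circ \pr_{I,j}$, Lemma~\ref{compose basic} gives $\eta_j$ basic, hence $\zeta$ basic. If $\Ker\zeta = C$ is abelian, then $\zeta$ is a nontrivial $F_C$-linear combination of $(\eta_i)_{i \in I_C}$; each such $\eta_i$ is basic by the same application of Lemma~\ref{compose basic} to $\eta_I = \eta_i \circ \pr_{I,i}$, and Proposition~\ref{basic subspace}(a) then yields $\zeta$ basic.

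\textbf{Backward direction; the candidate square.} The hypothesis, applied with $\zeta = \eta_i$, already tells us each $\eta_i$ is basic. Using Lemma~\ref{change origin} with the open normal subgroup $N = \bigcap_{i \in I}\Ker\phi_i$ of $G$ (open because $I$ is finite), we may arrange compact cartesian squares presenting each $\eta_i$ as a basic cover over a \emph{common} base $\phi \colon G \onto A = G/N$ and with each $B_i \in \calE_f(G)$. Setting $B = \fprod[A]{i \in I} B_i$, the induced diagram
\[
\xymatrix{\fprod{i \in I} H_i \ar[r]^{\eta_I} \ar[d] & G \ar[d]^{\phi} \\ B \ar[r]^{\alpha_I} & A}
\]
is cartesian (cf.~\cite[Lemma 3.9]{fund}); a direct check using surjectivity of each $\beta_i$ shows that the natural map $\fprod{i \in I} H_i \onto B$ is surjective, so $B$ is a finite quotient of $\fprod{i \in I} H_i \in \calE(G)$, hence $B \in \calE_f(G)$.

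\textbf{Main obstacle: compactness.} The delicate step is to verify compactness of this square. The trouble is that $\fprod{i \in I} H_i$ itself need not be a compact fiber product over $G$: this fails precisely when $(\eta_i)_{i \in I_C}$ is linearly dependent for some abelian $C$, or when a non-abelian isomorphism class is duplicated among the $\eta_i$. In those situations proper ``diagonal-like'' subgroups of $\fprod{i \in I} H_i$ already project surjectively onto every $H_i$. My plan is to argue by contradiction: given a proper closed $K \le \fprod{i \in I} H_i$ with $K \onto B$ and $K \onto G$, compactness of each individual basic square $H_i = B_i \times_A G$ forces $K \onto H_i$ for every $i$, making $K$ just such a diagonal-like subgroup. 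I then want to derive a contradiction by invoking the \emph{full} hypothesis --- which, by Proposition~\ref{indecomposable quotients of a fiber product pi is id}(b), forces also the split cover $\sigma_C^{(0)}$ to be basic whenever $(\eta_i)_{i \in I_C}$ is linearly dependent --- together with $\fprod{i \in I} H_i \in \calE(G)$. Turning this informal obstruction into a rigorous contradiction is where I expect the genuine difficulty to lie.
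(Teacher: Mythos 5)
Your forward direction is correct, though more roundabout than necessary: since $\zeta\mle\eta_I$ means $\eta_I=\zeta\circ\eps$ for some epimorphism $\eps$, Lemma~\ref{compose basic} applied once to this factorization gives directly that $\zeta$ is basic, without splitting into abelian/non-abelian cases or invoking Proposition~\ref{basic subspace}.

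For the backward direction you have set up exactly the same candidate square as the paper (change of origin to a common $\phi\colon G\onto A$, then form $B=\fprod[A]{i\in I}B_i$ and the cartesian square \eqref{center}), and you have correctly located the genuine difficulty: verifying compactness of \eqref{center} when the family $(\eta_i)_{i\in I_C}$ may be linearly dependent, so that $\fprod{i\in I}H_i$ is \emph{not} a compact fiber product over $G$. However, the argument you sketch does not close the gap. Observing that any proper $K\le\fprod{i\in I}H_i$ mapping onto $B$ and $G$ must map onto every $H_i$ only re-expresses the non-compactness of the fiber product; it does not use the hypothesis in a way that yields a contradiction. The paper's missing ingredient is a two-step mechanism you do not have. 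First, one must \emph{refine} the finite quotient $A$: because $I$ is finite, only finitely many $C$ are involved, and $H^2(G,C)=\varinjlim_{A'}H^2(A',C)$, one can replace $\phi$ by a finer quotient so that for every relevant $C$ the inflation $\Inf_\phi$ maps $S_A(C)=\Span_F(\alpha_i\st i\in I_C)$ \emph{injectively} onto $S_G(C)$, and so that for each $\eta\in\calD$ the corresponding $\alpha\in H^2(A,C)$ coming from its compact cartesian square lies in $S_A(C)$ (this is the paper's condition (*)). This is where the hypothesis that \emph{every} indecomposable $\eta\mle\eta_I$ is basic --- including $\sigma^{(0)}_C$ when it is dominated --- is actually consumed: it guarantees a compact square over some finite $A$ for each such $\eta$, which one then absorbs into the common refinement. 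Second, to exploit (*), one invokes \cite[Corollary~2.13]{fund}: if \eqref{center} were not compact, the failure localizes in a commutative diagram \eqref{right} with an \emph{indecomposable} $\alpha'$ and a non-compact right-hand square. Then $\Ker\alpha'$ non-abelian forces (by Proposition~\ref{indecomposable quotients of a fiber product pi is id}(a')) $\alpha'\isom\alpha_i$ and the right-hand square to be \eqref{left}, which is compact; while $\Ker\alpha'$ abelian forces $\alpha'\in S_A(C)$, and (*) plus \cite[Lemma~5.2]{fund} identifies the right-hand square with the known compact square \eqref{single}. Either case is a contradiction. Without the refinement step and Corollary~2.13 you have no way to convert your ``diagonal-like subgroup'' into a contradiction, and indeed your own closing sentence concedes the proof is not finished.
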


\begin{proof}
Let $\calD$ be the set of
(representatives of isomorphism classes of)
indecomposable covers of $G$
dominated by $\eta_I$.
By 
\cite[Lemma 5.1]{fund},
%\label{map of extensions}
elements of $\calD$ are determined by (certain) epimorphisms
from $\Ker \eta_I$.
As $\Ker \eta_I$ is finite,
$\calD$ is finite.

If $\eta_I$ is basic,
then
$\fprod{i \in I} H_i \in \calE(G)$;
by Lemma~\ref{compose basic},
every $\eta \in \calD$
is basic.

Conversely,
assume that 
$\fprod{i \in I} H_i \in \calE(G)$
and every $\eta \in \calD$ is basic.
By Lemma~\ref{change origin}
we may assume that
there is $\phi \colon G \onto A$
and for every $\eta \in \calD$
there is a compact cartesian square \eqref{single}
with $B$ finite.
In particular, for every $i \in I$
there is a compact cartesian square \eqref{left}
with $B_i$ finite.

If $\phi$ is the composition of some
$\phi' \colon G \onto A'$
and
$\phi'' \colon A' \onto A$
with $A'$ finite,
we may form,
as in the proof of Lemma~\ref{change origin},
a commutative diagram \eqref{raise}
with all squares cartesian
and replace \eqref{single}
with the upper square in \eqref{raise}.
This will serve us to achieve additional properties of
\eqref{single}:

Fix $C \in \Lambda_\ab(G)$,
and let
$I_C = \{i \in I \st \Ker \eta_i \isom_G C\}$
and
$F = \End_G(C)$.
Put
$S_G(C) = \Span_F((\eta_i)_{i \in I_C})
\subseteq
H^2(G,C)$
and
$S_A(C) = \Span_F((\alpha_i)_{i \in I_C})
\subseteq
H^2(A,C)$.
If $\eta \in \calD$
and $\Ker \eta \isom_G C$,
then,
by
Proposition~\ref
{indecomposable quotients of a fiber product pi is id}(b'),(b),
$\eta$ is dominated by $\eta_{I_C} \colon \fprod{i \in I_C} H_i \onto G$
and
$\eta \in S_G(C)$.
The inflation map
$\Inf_\phi \colon H^2(G,A) \to H^2(G,C)$
maps
$S_A(C)$
onto $S_G(C)$
and $\alpha$ onto a multiple of $\eta$,
hence into $S_G(C)$.
As
$H^2(G,C) = \varinjlim_{A'} H^2(A',C)$,
where $A'$ are as in the preceding paragraph,
and $\dim S_{A'}(C) \le |I_C| < \infty$,
we may assume, by the preceding paragraph,
that
\begin{itemize}
\item
[(*)]
$\Inf_\phi$ maps $S_A(C)$
injectively onto $S_G(C)$
and
$\alpha \in S_A(C)$.
\end{itemize}

As $I$ is finite,
$I_C \ne \emptyset$
for only finitely many $C \in \Lambda_\ab(G)$.
Thus we may assume that (*) holds for all $C \in \Lambda_\ab(G)$.

We claim that then the cartesian square \eqref{center} is compact.

If not,
by \cite[Corollary~2.13]{fund}
%\label{foot not compact}
there is a commutative diagram \eqref{right}
with cartesian squares,
$\alpha', \eta'$ indecomposable,
and the right-handed square not compact.
Let $C = \Ker \alpha'$.

If $C$ is non-abelian,
by
Proposition~\ref{indecomposable quotients of a fiber product pi is id}(a'),
we may assume that $\alpha' = \alpha_i$ for some $i \in I$
and $\gamma = \pr_{I,i}$.
Therefore the right-handed square in \eqref{right} is \eqref{left},
which is compact, a contradiction.

If $C$ is abelian,
by
Proposition~\ref{indecomposable quotients of a fiber product pi is id}(b),
$\alpha' \in S_A(C)$.
By \cite[Lemma~5.2]{fund},
%\label{inflation in diagram},
$\eta \isom_G \Inf_\phi(\alpha')$.
Hence, by (*),
$\alpha' \isom_G \alpha$,
whence 
the right-handed square in \eqref{right}
is \eqref{single},
which is compact, a contradiction.

Thus \eqref{center} is compact.
%as claimed.
As $\fprod[A]{i \in I} B_i$ is a quotient of 
$\fprod{i \in I} H_i \in \calE(G)$,
it is in $\calE_f(G)$.
Hence $\eta_I$ is basic.
\end{proof}

%\notetous{The following Corollary is not needed.}

\begin{Corollary}%\label{fprod of basics char}
Let
$(\eta_i \colon H_i \onto G)_{i \in I}$ be
a finite family of indecomposable epimorphisms.
%In the setup of Proposition~\ref{fprod of basics is basic},
Then
$\eta_I \colon \fprod{i \in I} H_i \onto G$
is basic
if and only if
$\fprod{i \in I} H_i \in \calE(G)$,
all $\eta_i$ are basic,
and
for every $C \in \Lambda_\ab(G)$
%with $I_C \ne \emptyset$
either 
$\mult_C(\eta_I) = 0$
%the $C$-multiplicity of $\fprod{i \in I} H_i$ is $0$
or 
the split extension $\sigma^{(0)}_C \colon G \ltimes C \onto G$
is basic.
\end{Corollary}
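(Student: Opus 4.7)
The plan is to invoke Proposition~\ref{fprod of basics is basic} and then concretize its clause ``every indecomposable cover of $G$ dominated by $\eta_I$ is basic'' using the classification of such indecomposable covers from Proposition~\ref{indecomposable quotients of a fiber product pi is id}. Both sides of the asserted equivalence include $\fprod{i \in I} H_i \in \calE(G)$ (the left side via Remark~\ref{EV trivialities}(e), which sits inside Definition~\ref{basic cover}), so the real work is to match the basicness clauses.

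First I will split the indecomposable $\zeta \mle \eta_I$ according to whether $\Ker \zeta$ is abelian. Part (a) of Proposition~\ref{indecomposable quotients of a fiber product pi is id} shows that in the non-abelian case these $\zeta$ are, up to $\isom_G$, exactly the $\eta_i$ with non-abelian kernel, so ``all such $\zeta$ are basic'' reads as ``each such $\eta_i$ is basic''. For the abelian case, part (b) of the same proposition tells me that the $\zeta \mle \eta_I$ with $\Ker \zeta \isom_G C$ are precisely the classes in $H^2(G,C)$ admitting a nontrivial $F_C$-linear-combination representation from $(\eta_i)_{i \in I_C}$. I will separate these into the nonzero classes, which form $\Span_{F_C}(\eta_i \st i \in I_C) \smallsetminus \{0\}$, and the zero class $\sigma^{(0)}_C$ — which is itself indecomposable because $C$ is a simple $G$-module.

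For the nonzero classes, Proposition~\ref{basic subspace}(a) gives ``all of them are basic iff every $\eta_i$ with $i \in I_C$ is basic'' (the reverse direction being immediate, since each $\eta_i$ lies in the span). The split class $\sigma^{(0)}_C$ appears among the dominated indecomposables exactly when $0$ admits a nontrivial representation as a linear combination of $(\eta_i)_{i \in I_C}$, i.e., when that family is linearly dependent over $F_C$; by Theorem~\ref{fundamental mult supp}(c) this is equivalent to $\mult_C(\eta_I) > 0$. Hence ``$\sigma^{(0)}_C$ is basic whenever it is dominated'' translates to ``either $\mult_C(\eta_I) = 0$ or $\sigma^{(0)}_C$ is basic''. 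Collecting these per $C$ with the non-abelian clause produces the stated characterization.

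The main subtlety is the careful handling of the split class: one must read Proposition~\ref{indecomposable quotients of a fiber product pi is id}(b) correctly (``nontrivial'' refers to the coefficient tuple, not to its value) and then notice that the appearance of $\sigma^{(0)}_C$ among the dominated indecomposables is governed by the relation dimension, not by the span itself. Once that bookkeeping is in place the corollary is a direct assembly of Proposition~\ref{fprod of basics is basic}, Proposition~\ref{indecomposable quotients of a fiber product pi is id}, Proposition~\ref{basic subspace}(a), and Theorem~\ref{fundamental mult supp}(c).
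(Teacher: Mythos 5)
Your proposal is correct and follows essentially the same route as the paper: reduce via Proposition~\ref{fprod of basics is basic}, classify the dominated indecomposables by Proposition~\ref{indecomposable quotients of a fiber product pi is id}, handle the nonzero abelian classes with Proposition~\ref{basic subspace}(a), and detect $\sigma^{(0)}_C \mle \eta_I$ through linear dependence, i.e.\ $\mult_C(\eta_I) \ge 1$, via the relation-dimension reading of Theorem~\ref{fundamental mult supp}(c). One small slip worth fixing: your parenthetical claim that all of $\Span_{F_C}(\eta_i)_{i\in I_C}\smallsetminus\{0\}$ being basic implies each $\eta_i$ with $i \in I_C$ is basic ``since each $\eta_i$ lies in the span'' breaks down when some $\eta_i$ \emph{is} the split extension $\sigma^{(0)}_C$, which equals $0$ in $H^2(G,C)$ and so lies outside the punctured span. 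For that case you should observe directly that $\eta_i$ is dominated by $\eta_I$ via $\pr_{I,i}$ (which is how the paper argues), or note that a split factor forces $(\eta_j)_{j\in I_C}$ to be linearly dependent, so your $\mult_C$ clause already supplies the basicness of $\sigma^{(0)}_C = \eta_i$.
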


\begin{proof}
By Proposition~\ref{fprod of basics is basic}
it suffices to show
that the following are equivalent:
\begin{itemize}
\item[(a)]
Every indecomposable cover $\eta$ of $G$
dominated by $\eta_I$
is basic.
\item[(b)]
All $\eta_i$ are basic,
and
for every $C \in \Lambda_\ab(G)$
either
$\mult_C(\eta_I) = 0$
or 
$\sigma^{(0)}_C$
%$G \ltimes C \onto G$
is basic.
\end{itemize}
For every $C \in \Lambda_\ab(G)$
let
$I_C = \{i \in I \st \Ker \eta_i \isom_G C\}$.

\noindent
(a) $\implies$ (b): 
Every $\eta_i$ is
an indecomposable cover of $G$ dominated by $\eta_I$,
so it is basic.
Let $C \in \Lambda_\ab(G)$
such that
$\mult_C(\eta_I) \ge 1$.
By Lemma~\ref{fiber product is fundamental},
$\eta_I$ is fundamental,
hence by Theorem~\ref{fundamental mult supp}
we may assume that one of the $\eta_i$ is 
$\sigma^{(0)}_C$,
so, again, it is dominated by $\eta_I$, whence basic.

\noindent
(b) $\implies$ (a): 
By
Proposition~\ref{indecomposable quotients of a fiber product pi is id}
every indecomposable cover $\eta$ of $G$ is 
either $\eta_i$,
for some $i \in I$,
or
a non-trivial linear combination of
$(\eta_i)_{i \in I_C}$,
for some $C \in \Lambda_\ab(G)$.
In the latter case,
if $\eta \not\isom_G \sigma^{(0)}_C$,
then $\eta$ is basic
by Proposition~\ref{basic subspace}(a);
if $\eta \isom_G \sigma^{(0)}_C$,
then $(\eta_i)_{i \in I_C}$ is linearly dependent,
hence
by Proposition~\ref{indecomposable quotients of a fiber product pi is id}(b),
$\sigma^{(0)}_C \mle \eta_I$,
whence
$\mult_C(\eta_I) \ge 1$
by Theorem~\ref{fundamental mult supp},
so
$\sigma^{(0)}_C$
is basic by (b).
\end{proof}

\begin{Proposition}\label{going down}
Consider a semi-cartesian square
\begin{equation}\label{zeta}
\xymatrix{
H \arr[r]_{\eta} \arr[d]^{\zeta} & G \arr[d]^{\xi}
\\
\Hbar \arr[r]^{\etabar} & \Gbar \rlap{.}
}
\end{equation}
Assume that
$\calE_f(\Gbar) = \calE_f(G)$.
If $\eta$ is basic,
then $\etabar$ is basic.
\end{Proposition}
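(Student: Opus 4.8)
The plan is to transport the compact cartesian square witnessing ``$\eta$ is basic'' forward along $\zeta$ and $\xi$, producing a compact cartesian square witnessing ``$\etabar$ is basic''. First I record two consequences of the hypotheses. Since \eqref{zeta} is semi-cartesian, $\Ker(\etabar\circ\zeta) = \Ker(\xi\circ\eta) = (\Ker\zeta)(\Ker\eta)$, and hence $\Ker\etabar = \zeta(\Ker\eta)$ (if $\zeta(h) \in \Ker\etabar$ then $h \in \Ker(\etabar\circ\zeta) = (\Ker\zeta)(\Ker\eta)$, so $\zeta(h) \in \zeta(\Ker\eta)$). As $\eta$ is basic, $\Ker\eta \isom \Ker\alpha$ is finite, so $\Ker\etabar$ is finite. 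Moreover $\Hbar$ is a quotient of $H$ and $H \in \calE(G)$, so every finite quotient of $\Hbar$ lies in $\calE_f(G) = \calE_f(\Gbar)$.

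Next, fix a compact cartesian square of the shape \eqref{basic}, with $B \in \calE_f(G)$, that exhibits $\eta$ as basic. Choose an open normal $\bar U_0 \normal \Hbar$ with $\bar U_0 \cap \Ker\etabar = 1$ (possible because $\Ker\etabar$ is finite), and put $\bar U := \zeta(\Ker\beta) \cap \bar U_0$. Since $\zeta$ is an open map and $\Ker\beta$ is open, $\zeta(\Ker\beta)$ is open; thus $\bar U$ is an open normal subgroup of $\Hbar$ with $\bar U \cap \Ker\etabar = 1$. Let $\bar\beta \colon \Hbar \onto \Hbar/\bar U$ and $\bar\phi \colon \Gbar \onto \Gbar/\etabar(\bar U)$ be the quotient maps and $\bar\alpha$ the map induced by $\etabar$. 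Because $\bar U \cap \Ker\etabar = 1$ and $\bar U\cdot\Ker\etabar = \etabar^{-1}(\etabar(\bar U))$, the resulting commutative square (of shape \eqref{basic}, with $\Hbar, \Gbar, \Hbar/\bar U, \Gbar/\etabar(\bar U), \etabar, \bar\beta, \bar\phi, \bar\alpha$ in the roles of $H, G, B, A, \eta, \beta, \phi, \alpha$) is cartesian, exactly as in the construction of Remark~\ref{nonsplit is basic}; and $\Hbar/\bar U$, a finite quotient of $\Hbar$, lies in $\calE_f(\Gbar)$. It remains to prove this square is compact.

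This is the crux. Let $\bar D \le \Hbar$ be a closed subgroup with $\bar\beta(\bar D) = \Hbar/\bar U$ and $\etabar(\bar D) = \Gbar$, i.e.\ $\bar D\bar U = \Hbar$ and $\etabar(\bar D) = \Gbar$, and put $D := \zeta^{-1}(\bar D)$, so that $\Ker\zeta \subseteq D$ and $\zeta(D) = \bar D$. First, $\eta(D) = G$: given $g \in G$ pick $h_0$ with $\eta(h_0) = g$; since $\zeta(\Ker\eta) = \Ker\etabar$ we have $\zeta(\eta^{-1}(g)) = \zeta(h_0)\Ker\etabar$, and as $\etabar(\bar D) = \Gbar$ there is $d \in \bar D$ with $\etabar(d) = \xi(g) = \etabar(\zeta(h_0))$, so $d \in \zeta(h_0)\Ker\etabar = \zeta(\eta^{-1}(g))$, whence $d = \zeta(h)$ with $\eta(h) = g$, and $h \in D$. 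Second, $\beta(D) = B$: pulling $\bar D\bar U = \Hbar$ back through the surjection $\zeta$ gives $D\cdot\zeta^{-1}(\bar U) = H$; since $\bar U \subseteq \zeta(\Ker\beta)$ we get $\zeta^{-1}(\bar U) \subseteq (\Ker\beta)(\Ker\zeta)$, and using $\Ker\zeta \subseteq D$ together with the normality of $\Ker\beta$ and $\Ker\zeta$ we obtain $H = D(\Ker\beta)(\Ker\zeta) = D\Ker\beta$. Thus $D$ is mapped by $\beta$ onto $B$ and by $\eta$ onto $G$; compactness of the original square \eqref{basic} forces $D = H$, hence $\bar D = \zeta(D) = \Hbar$. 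So the new square is compact, and $\etabar$ is basic.

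I expect the only subtle step to be the choice of $\bar U$: it must separate the finite group $\Ker\etabar$ (so that the pushed-forward square is cartesian) and at the same time satisfy $\zeta^{-1}(\bar U) \subseteq (\Ker\beta)(\Ker\zeta)$ (so that the compactness of the original square transfers), and $\bar U = \zeta(\Ker\beta)\cap\bar U_0$ achieves both. Note that full cartesianness of \eqref{zeta} is never needed — semi-cartesianness enters only through the identity $\Ker\etabar = \zeta(\Ker\eta)$.
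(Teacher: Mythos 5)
Your proof is correct, and it takes a genuinely different and more economical route than the paper's. The paper first reduces to the case where \eqref{zeta} is cartesian (via Lemma~\ref{compose basic}), then distinguishes two cases according to whether $\Ker\xi$ is finite: when it is, it assembles a commutative cube whose top face is the given compact square and whose bottom face is the desired one, transferring compactness by an external lemma from [H]; the general case is then reduced to this one by passing to suitable finite quotients. You instead build the witnessing square for $\etabar$ directly downstairs, by the device of Remark~\ref{nonsplit is basic} applied with the carefully chosen open normal subgroup $\bar U = \zeta(\Ker\beta)\cap\bar U_0$, and you verify compactness by hand: a candidate closed subgroup $\bar D \le \Hbar$ pulls back to $D=\zeta^{-1}(\bar D)\le H$, which surjects onto $G$ (using $\zeta(\Ker\eta)=\Ker\etabar$, the only consequence of semi-cartesianness you need) and onto $B$ (using $\zeta^{-1}(\bar U)\subseteq(\Ker\beta)(\Ker\zeta)$ and $\Ker\zeta\le D$), so compactness of the original square forces $D=H$ and hence $\bar D=\Hbar$. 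All the individual steps check out, including the openness of $\zeta(\Ker\beta)$ and the cartesianness of the new square. Your argument dispenses with the case distinction on $\Ker\xi$, the cube diagram, and the citations to the compactness-descent lemmas of [H], and it isolates exactly which parts of the hypotheses are used ($\zeta(\Ker\eta)=\Ker\etabar$ and the inclusion $\calE_f(G)\subseteq\calE_f(\Gbar)$). What the paper's version buys in exchange is that it stays entirely within the square-calculus of [H] used throughout the text, whereas yours is the shorter, self-contained verification.
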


\begin{proof}
We may replace $H$ by $H_0 := H/(\Ker \zeta) \cap (\Ker \eta)$,
and $\zeta, \eta$ by the induced maps
$H_0 \onto \Hbar$,
$H_0 \onto G$,
because $H_0 \onto G$ is basic by Lemma~\ref{compose basic}.
Thus we may assume that \eqref{zeta} is a cartesian square.

Let $L = \Ker \xi$
and $M = \Ker \zeta$.
Then
$\eta$ maps $M$ isomorphically onto $L$.
By assumption there is a compact cartesian square \eqref{single}
---the upper face in the diagram \eqref{cube diagram 2} below---
with $\alpha$ indecomposable
and $B \in \calE_f(G) = \calE_f(\Gbar)$.

\subdemoinfo{Case I}{Assume that $L$ is finite.}

%Thus
%$L, M$ are finite
%and
%$M \cap \Ker \eta = 1$.
There is an open $N \normal G$
such that
$L \cap N = 1$.
Then
$\eta^{-1}(L) \cap \eta^{-1}(N) = \Ker \eta$,
so
$M \cap \eta^{-1}(N) \le \Ker \eta$.
But
$M \cap \Ker \eta = 1$,
so
$M \cap \eta^{-1}(N) = 1$.

By Lemma~\ref{change origin}
we may assume that
$\Ker \phi \le N$.
Then
$L \cap \Ker \phi = 1$
and
$M \cap \beta^{-1}(\Ker \alpha) = M \cap \eta^{-1}(\Ker \phi) = 1$.
So
$\beta(M) \cap \Ker \alpha = 1$.
Furthermore,
$\Ker \beta \le \beta^{-1}(\Ker \alpha)$,
so also
$M \cap \Ker \beta = 1$.
%
%$\eta(M \cap \Ker \beta) \le 
%L \cap \eta(\Ker \beta) \le 
%L \cap \Ker \phi  = 1$,
%so, as $\eta|_{M}$ is injective,
%$M \cap \Ker \beta = 1$.
%
So the restrictions
$\phi|_L \colon L \onto \phi(L)$,
$\beta|_M \colon M \onto \beta(M)$,
$\alpha|_{\beta(M)} \colon \beta(M) \onto \phi(L)$
are isomorphisms.

Put 
$A_1 = A/\phi(L)$ and $B_1 = B/\beta(M)$,
and let
$\sigma \colon A \onto A_1$
and
$\tau\colon B \onto B_1$
be the quotient maps.
Let
$\phi_1 \colon \Gbar \onto A_1$,
$\beta_1 \colon \Hbar \onto B_1$,
and
$\alpha_1 \colon B_1 \onto A_1$
be the epimorphisms induced from
$\phi$, $\beta$, and $\alpha$,
respectively.
%(to define $\alpha_1$ use that $\alpha(\beta(M)) = \phi(L)$).
\begin{equation}\label{cube diagram 2}
\xymatrix@=12pt{ 
& H \arr[rr]^(.7){\eta} \arr'[d][dd]^(.4){\zeta}
\arr[ld]_{\beta}
&& G \arr[dd]^(.65){\xi} \arr[ld]^{\phi} \\
B \arr[rr]^(.7){\alpha} \arr[dd]_(.65){\tau}
&& A \arr[dd]_(.65){\sigma} & \\
& \Hbar \arr'[r][rr]^{\etabar} \arr[ld]_(.4){\beta_1}
&& \Gbar \arr[ld]^(.4){\phi_1} \\
B_1 \arr[rr]^{\alpha_1} && A_1 &
}
\end{equation}
Then all the faces in the next cube diagram commute,
except, perhaps, the bottom face.
However, the bottom face commutes as well,
because so do the rest.
So the diagram is commutative.

The back and the upper face are cartesian squares
by assumption.
So are:
the right face
(because 
$\phi$ maps $\Ker \xi = L$ isomorphically onto
$\Ker \sigma = \phi(L)$),
the left face
(because 
$\beta$ maps $\Ker \zeta = M$ isomorphically onto
$\Ker \tau = \beta(M)$),
and
the front face
(because 
$\alpha$ maps $\Ker \tau = \beta(M)$ isomorphically onto
$\Ker \sigma = \phi(L)$).
By Remark~\ref{square trivialities}(a),
%\label{square trivialities}(a),
the concatenation of the upper and the front face 
is a cartesian square;
this square is also the concatenation of
the back and the bottom face,
hence, again by
Remark~\ref{square trivialities}(a),
%Remark~\ref{square trivialities}(a),
the bottom face is a cartesian square.

As the upper face is compact,
by \cite[Lemma~2.14]{fund}
%\label{cube}
so is the bottom face.
Moreover,
$B_1$ is a quotient of $B$,
hence $B_1 \in \calE_f(G) = \calE_f(\Gbar)$.
Therefore
$\etabar \colon \Hbar \onto \Gbar$ is basic.
This finishes the case of $L$ finite.

\subdemoinfo{Case II}{The general case.}

Let $U = \Ker \beta \cap M$.
Then $U \normal H$ and $U \le \Ker \beta$,
and $U$ is open in $M$.
Also,
$V := \eta(U) \le \Ker \phi$,
because
$\phi(\eta(U)) = \alpha(\beta(U)) = \alpha(1) = 1$,
and $V$ is open in $L$.
Put $H' = H/U$ and $G' = G/V$,
let
$\zeta' \colon H \onto H'$
and
$\xi' \colon G \onto G'$
be the quotient maps.
Let
$\eta', \zeta'', \xi'', 
\beta', \phi'$
be the induced epimorphisms
such that the following two diagrams commute:
\newline
\begin{subequations}
\begin{tabularx}{\textwidth}{Xp{.1cm}X}
\begin{equation}\label{zeta'}
\xymatrix@=24pt{
H \arr[r]_{\eta} \arr[d]^{\zeta'}
\arr@/_1pc/[dd]_{\zeta}
& G \arr[d]_{\xi'} \arr@/^1pc/[dd]^{\xi}
\\
H' \arr[r]^{\eta'} \arr[d]^{\zeta''} & G' \arr[d]_{\xi''}
\\
\Hbar \arr[r]^{\etabar} & \Gbar
}
\end{equation}
& &
\begin{equation}\label{phi'}
\xymatrix@=24pt{
H \arr[r]_{\eta} \arr[d]^{\zeta'}
\arr@/_1pc/[dd]_{\beta}
& G \arr[d]_{\xi'} \arr@/^1pc/[dd]^{\phi}
\\
H' \arr[r]^{\eta'} \arr[d]^{\beta'} & G' \arr[d]_{\phi'}
\\
B \arr[r]^{\alpha} & A
}
\end{equation}
\end{tabularx}
\end{subequations}
The upper square in these diagrams is cartesian,
because
$\Ker \zeta' \cap \Ker \eta \le M \cap \Ker \eta = 1$
and
$\Ker(\xi' \circ \eta) = \eta^{-1}(V) =
U \Ker \eta = (\Ker \zeta')(\Ker \eta)$.
So, by Remark~\ref{square trivialities}(a),
%So, by Remark~\ref{square trivialities}(a),
all the squares in these diagrams are cartesian;
by \cite[Remark~2.10(b)]{fund}
%\label{square trivialities}(b)
the squares in \eqref{phi'} are compact.
From \eqref{zeta'} and from
$\calE_f(\Gbar) = \calE_f(G)$
we deduce that
$\calE_f(\Gbar) = \calE_f(G') = \calE_f(G)$
and from \eqref{phi'} that $\eta'$ is basic.
Observe that
$\Ker \xi'' = L/V$
is finite.
Thus, replacing \eqref{zeta}
with the lower square in \eqref{zeta'},
by Case I we get that
$\etabar$ is basic.
\end{proof}

\begin{Proposition}\label{I is basic}
An indecomposable I-cover is basic.
\end{Proposition}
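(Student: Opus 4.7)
The plan is to let $\eta \colon H \onto G$ be an indecomposable I-cover and prove it is basic. I would first observe that $C := \Ker \eta$ is finite: indecomposability forces the only closed $H$-normal subgroups of $C$ to be $1$ and $C$, and since $\bigcap_U U = 1$ as $U$ ranges over the open normal subgroups of $H$ while $C \ne 1$, some $U$ has $U \cap C = 1$, embedding $C$ into the finite group $H/U$. By Remark~\ref{EV trivialities}(e), since $\eta$ is an I-cover, $\calE_f(H) = \calE_f(G)$, so every finite quotient of $H$ lies in $\calE_f(G)$. Let $\calF$ denote the family of open normal subgroups $U$ of $H$ with $U \cap C = 1$; because $C$ is finite, $\calF$ is closed under finite intersection and cofinal among all open normal subgroups of $H$. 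For each $U \in \calF$, setting $B = H/U$, $A = G/\eta(U)$ and denoting the induced epimorphisms by $\beta, \phi, \alpha$, the resulting square is cartesian (by $U \cap C = 1$), $\alpha$ is indecomposable (as $\Ker \alpha \cong C$ inherits minimal normality from $C$ in $H$), and $B \in \calE_f(G)$. Consequently, by Definition~\ref{basic cover}, $\eta$ is basic as soon as one of these squares is compact.

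If $\eta$ does not split, Remark~\ref{nonsplit is basic} immediately yields that $\eta$ is basic. Otherwise, suppose for contradiction that $\eta$ splits but is not basic. Then for every $U \in \calF$, Proposition~\ref{indecomposable cartesian square} gives that $\phi$ dominates $\alpha$: there is a surjective epimorphism $\gamma_U \colon G \onto B$ with $\alpha \gamma_U = \phi$. The cartesian universal property lifts $\gamma_U$ uniquely to a continuous section $s_U \colon G \to H$ of $\eta$ satisfying $\beta s_U = \gamma_U$, equivalently $s_U(G)\,U = H$.

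Finally, I would combine the $s_U$'s into a single continuous section by a compactness argument. Working in an appropriate compact ambient space---either the Chabauty space of closed subgroups of $H$, or the inverse limit $\Hom_{\text{cts}}(G,H) = \varprojlim_V \Hom_{\text{cts}}(G, H/V)$ with the cartesian-universal compatibility imposed---I would extract a continuous section $s \colon G \to H$ of $\eta$ satisfying $s(G)\,V = H$ for every $V \in \calF$. By cofinality of $\calF$, $s(G)$ is dense in $H$; and being the continuous image of the compact group $G$, $s(G)$ is closed, whence $s(G) = H$. Therefore $s$ is a continuous bijective section, hence a topological isomorphism, making $\eta = s^{-1}$ an isomorphism, contradicting indecomposability of $\eta$. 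The main obstacle is justifying this compactness step rigorously: at each finite level $V$ the set of continuous surjective lifts $G \to H/V$ of $\phi_V$ through $\eta_V$ can be infinite when $G$ is not finitely generated, so a naive Tychonoff or K\"onig argument does not directly apply; one must select the compact ambient carefully so that the ``sections with $s(G)\,V = H$'' form a closed non-empty filtered family and verify that the limit section is itself continuous, so that density indeed forces $s(G) = H$.
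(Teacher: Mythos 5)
Your approach is genuinely different from the paper's, and unfortunately the gap you flag at the end is not a technical nuisance but fatal. The paper argues by transfinite induction along Construction~\ref{sec transfinite}: assuming the indecomposable I-cover $\eta \colon H \onto G$ is not basic, it pulls $\eta$ back through the whole tower $G = G_0, G_1, \dots, G_\mu = E$ (fiber products at successor ordinals, inverse limits at limit ordinals), using Propositions~\ref{indecomposable cartesian square}, \ref{going down} and Remark~\ref{square trivialities} to propagate compactness of the pull-back squares and non-basicness of the pulled-back covers; at the top, because $\eta$ is an I-cover, $\pi_{\mu,0}$ lifts through $\eta$, and compactness of the accumulated square forces the indecomposable top map $\eta_\mu$ to be an isomorphism, a contradiction. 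The I-cover hypothesis is used \emph{twice}: once to keep $\calE_f$ constant along the tower, and once, decisively, at the top level $G_\mu = E$.

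Your proposal uses the I-cover hypothesis only once, to obtain $\calE_f(H) = \calE_f(G)$, and then tries to assemble the local sections $s_U$ into a global one by compactness. But $\calE_f(H) = \calE_f(G)$ already follows from the weaker hypothesis $H \in \calE(G)$: if $H$ is a quotient of the smallest embedding cover $E$ of $G$, then $E$ and the smallest embedding cover $E_H$ of $H$ dominate each other, hence have the same finite quotients. So a complete version of your argument would prove that \emph{any} indecomposable split $\eta \colon H \onto G$ with $H \in \calE(G)$ that is not basic is an isomorphism. That is false, and Section~\ref{examples} of the paper shows it. Take a profinite $G$ with the embedding property admitting $G \times S$ as a quotient for some finite simple $S$ (e.g.\ $G$ free profinite of countably infinite rank), and let $\eta \colon H = G \times S \onto G$ be the coordinate projection. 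Then $\eta$ is indecomposable and split, $H \in \calE(G)$ (so $\calE_f(H) = \calE_f(G)$), and $\eta$ is not basic, since $G$ has no non-isomorphism basic covers by Lemma~\ref{embedding covers have no basic covers} (cf.\ Lemma~\ref{not superbasic}, Proposition~\ref{not I-cover}); yet $\eta$ is not an isomorphism. In this situation every square attached to $U \in \calF$ really is non-compact, exactly as in your setup, and produces a section $s_U$ with $s_U(G)U = H$ --- but no section of $\eta$ can have dense image, because the image of any continuous section is a \emph{proper closed} complement of $\Ker\eta$. Hence $\bigcap_{V \in \calF}\{s : s(G)V = H\} = \emptyset$, and no choice of compact ambient (Chabauty or otherwise) can repair that: the filtered family simply has no limit. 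To close the gap you must use the I-cover hypothesis somewhere beyond $\calE_f(H)=\calE_f(G)$, which is precisely what the paper's ascent to $E = G_\mu$ accomplishes.
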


\begin{proof}
Let 
$\eta \colon H \onto G$
be an indecomposable I-cover.
Assume that it is not basic.

Let $\eps \colon E \onto G$
be the smallest embedding cover of $G$
of Construction~\ref{sec transfinite}.
Thus 
$\mu$ is an ordinal,
there is an inverse system
$(G_\lambda, \pi_{\lambda,\kappa} \colon G_\lambda \onto G_\kappa)_
{0 \le \kappa < \lambda\le \mu}$
such that
$\calE_f(G_\lambda) = \calE_f(G)$
for all $0 \le \lambda \le \mu$,
and
$G = G_0$, $E = G_\mu$.

Put $H_0 = H$ and $\eta_0 = \eta$.
By transfinite induction we now construct,
for every $0 < \lambda \le \mu$,
a non-basic indecomposable
$\eta_\lambda \colon H_\lambda \onto G_\lambda$
such that
\begin{equation}\label{lambda kappa}
\xymatrix{
H_\lambda \arr[d]^{\beta_{\lambda, \kappa}}
\arr[r]^{\eta_\lambda}
& G_\lambda \arr[d]^{\pi_{\lambda, \kappa}}
\\
H_\kappa 
\arr[r]^{\eta_\kappa}
& G_\kappa 
}
\end{equation}
is a compact cartesian square,
for all $0 \le \kappa < \lambda$.

First suppose that
$\lambda$ is not a limit ordinal,
and assume that
a non-basic indecomposable
$\eta_{\lambda-1} \colon H_{\lambda-1} \onto G_{\lambda-1}$
has been constructed
such that 
the bottom square in the diagram below
is compact for every $0 \le \kappa < \lambda-1$.
Let 
$H_\lambda = H_{\lambda-1} \times_{G_{\lambda-1}} G_{\lambda}$
and let
$\beta_{\lambda,\lambda-1}$ and $\eta_{\lambda}$
be the coordinate projections.
Then 
also the upper square in the diagram below
is cartesian.
\begin{equation*}
\xymatrix{
H_\lambda \arr[d]^{\beta_{\lambda, \lambda-1}}
\arr[r]^{\eta_\lambda}
& G_\lambda \arr[d]^{\pi_{\lambda, \lambda-1}}
\\
H_{\lambda-1} \arr[d]^{\beta_{\lambda-1, \kappa}}
\arr[r]^{\eta_{\lambda-1}}
& G_{\lambda-1} \arr[d]^{\pi_{\lambda-1, \kappa}}
\\
H_\kappa 
\arr[r]^{\eta_\kappa}
& G_\kappa 
}
\end{equation*}
There cannot be
$\gamma \colon G_\lambda \onto H_{\lambda-1}$
such that
$\eta_{\lambda-1} \circ \gamma = \pi_{\lambda, \lambda-1}$,
because, as 
$\pi_{\lambda, \lambda-1}$ is indecomposable
and $\eta_{\lambda-1}$ is not an isomorphism,
$\gamma$ would be an isomorphism,
but 
$\pi_{\lambda, \lambda-1}$ is basic, while
$\eta_{\lambda-1}$ is not, a contradiction.
Thus, by Proposition~\ref{indecomposable cartesian square},
the upper square 
is compact.
Finally, by Remark~\ref{square trivialities}(b),
%Finally, by Remark~\ref{square trivialities}(b),
\eqref{lambda kappa}
is compact.

Now
suppose that $\lambda$ is a limit ordinal.
Let
$\eta_{\lambda} \colon H_{\lambda} \onto G_{\lambda}$
be the inverse limit of
$\eta_{\kappa} \colon H_{\kappa} \onto G_{\kappa}$
for all $\kappa < \lambda$.
Then
\eqref{lambda kappa}
is a compact cartesian square,
for all $\kappa < \lambda$,
by \cite[Lemma~2.15]{fund}.
%\label{inverse limit of compact squares}.

In both cases,
by Proposition~\ref{going down},
$\eta_\lambda$ is not basic.
% Why is $\eta_\lambda$ indecomposable?
This finishes the construction.

Consider
\eqref{lambda kappa}
with $\lambda = \mu$ and $\kappa = 0$.
Then the square below is compact.
\begin{equation*}
\xymatrix{
E \ar@/^1pc/@{=}[drr]
%\arr@/_1pc/@{.>}[ddr]_{\gamma_0}
\dotarr@/_1pc/[ddr]_{\gamma_0}
\dotarr[dr]^{\gamma}
\\
& H_\mu \arr[d]^{\beta_{\mu, 0}}
\arr[r]^{\eta_\mu}
& G_\mu \arr[d]^{\pi_{\mu, 0}}
\\
& H
\arr[r]^{\eta}
& G 
}
\end{equation*}
As $G_\mu = E$ and $\pi_{\mu, 0}$ is a smallest embedding cover,
and $\eta$ is an I-cover,
there is $\gamma_0 \colon E \onto H$
such that
$\eta \circ \gamma_0 = \pi_{\mu, 0}$.
As the square is compact,
there is $\gamma \colon E \onto H_\mu$
such that
$\eta_\mu \circ \gamma$ is the identity of $E = G_\mu$
and $\beta_{\mu, 0} \circ \gamma = \gamma_0$.
It follows that the indecomposable $\eta_\mu$ is an isomorphism,
a contradiction.
\end{proof}

The following result is a corollary of
Lemma~\ref{quotient of fprod not I}.

\begin{Corollary}\label{not basic}
Consider a cartesian square
\begin{equation}\label{ss}
\xymatrix{
\Hhat \arr[r]_{\etahat} \arr[d]^{\beta} & \Ghat \arr[d]^{\pi}
\\
H \arr[r]^{\eta} & G
}
\end{equation}
with indecomposable $\eta, \etahat$.
Suppose that $\pi$ is an I-cover
that dominates
a fiber product
$\pi' \colon H' \onto G$
of infinitely many copies of $\eta$.
Then $\etahat$ is not basic.
\end{Corollary}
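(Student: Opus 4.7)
The plan is to assume $\etahat$ is basic and derive a contradiction by producing infinitely many distinct ``partial sections'' $\zeta_k\colon \Ghat\to \Bhat$ which must nevertheless lie in a finite set. First, since $\pi$ is an I-cover, Remark~\ref{EV trivialities}(e) gives $\calE_f(\Ghat) = \calE_f(G)$, so Proposition~\ref{going down} applied to the cartesian square~\eqref{ss} shows $\eta$ is also basic. Fix a compact cartesian square
\[
\xymatrix@=20pt{
\Hhat \arr[r]^{\etahat} \arr[d]^{\hat\beta} & \Ghat \arr[d]^{\phihat} \\
\Bhat \arr[r]^{\hat\alpha} & \Ahat
}
\]
witnessing $\etahat$ basic, with $\Bhat \in \calE_f(\Ghat)$ finite; since $\etahat$ is indecomposable, so is $\hat\alpha$.

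Next I would extract infinitely many distinct sections of $\etahat$. Choose $\psi\colon \Ghat \onto H'$ with $\pi' \circ \psi = \pi$. For each $k \in I$ the composition $\sigma_k := \pr_{I,k} \circ \psi$ satisfies $\eta \circ \sigma_k = \pi$, and by the universal property of~\eqref{ss} lifts to a section $\tau_k\colon \Ghat \to \Hhat$ of $\etahat$ with $\beta \circ \tau_k = \sigma_k$. These sections are pairwise distinct because $\Ker \eta \ne 1$ forces the coordinate projections on $\psi(\Ghat) = H'$ to be pairwise distinct across $k$. Setting $\zeta_k := \hat\beta \circ \tau_k\colon \Ghat \to \Bhat$, each satisfies $\hat\alpha \circ \zeta_k = \phihat$, and the cartesian identity $\Ker \etahat \cap \Ker \hat\beta = 1$ ensures that distinct $\tau_k$ yield distinct $\zeta_k$; thus there are infinitely many distinct $\zeta_k$.

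Finally I would pigeonhole the $\zeta_k$ against a finite set. Each image $\Bhat_k := \zeta_k(\Ghat)$ is a \emph{proper} subgroup of $\Bhat$: if $\Bhat_k = \Bhat$, then $\tau_k(\Ghat) \le \Hhat$ would surject onto both $\Bhat$ (via $\hat\beta$) and $\Ghat$ (via $\etahat$), and compactness of the basic square would force $\tau_k(\Ghat) = \Hhat$, making $\etahat$ an isomorphism---contradicting its indecomposability. The key claim is that $\Bhat_k \cap \Ker \hat\alpha = 1$: since $\Bhat = \Bhat_k \cdot \Ker \hat\alpha$, this intersection is a normal subgroup of $\Bhat$ properly contained in $\Ker \hat\alpha$, which by indecomposability of $\hat\alpha$ is a minimal normal subgroup of $\Bhat$; hence the intersection is trivial. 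Consequently $\hat\alpha|_{\Bhat_k}\colon \Bhat_k \to \Ahat$ is an isomorphism and $\zeta_k = (\hat\alpha|_{\Bhat_k})^{-1} \circ \phihat$ is uniquely determined by $\Bhat_k$. Since $\Bhat$ is finite there are only finitely many such subgroups, hence only finitely many $\zeta_k$---contradicting the infinitude. The main obstacle is verifying the normality of $\Bhat_k \cap \Ker \hat\alpha$ in $\Bhat$ when $\Ker \hat\alpha$ is non-abelian: the abelian case is immediate (abelian groups normalize their subgroups), while the non-abelian case $\Ker \hat\alpha \cong T^n$ with $\Ahat$ acting transitively on the simple factors requires invoking the classification of $\Ahat$-invariant subgroups of $T^n$.
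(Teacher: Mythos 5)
Your approach is genuinely different from the paper's. The paper pulls the square~\eqref{ss} back along a map $\Ghat \onto H'$ to get a cartesian square
over $H' = \fprod{i\in I'}H_i$, identifies the resulting cover of $H'$ with the one-coordinate-dropping projection $\pr_{I,I'}$ of an infinite fiber product (adding one new copy of $\eta$), and then quotes Lemma~\ref{quotient of fprod not I} to conclude that this projection is not basic; Proposition~\ref{going down} then transports the non-basicness up to $\etahat$. You instead assume a compact cartesian square witnessing $\etahat$ basic, manufacture infinitely many distinct sections of $\etahat$ from the coordinate projections of the infinite fiber product, push them down to maps $\zeta_k\colon\Ghat\to\Bhat$, and try to pigeonhole those against the finitely many subgroups of $\Bhat$. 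The opening reductions (using Remark~\ref{EV trivialities}(e) and Proposition~\ref{going down}), the construction of the sections $\tau_k$ via the universal property, the verification that distinct $\tau_k$ give distinct $\zeta_k$ from $\Ker\etahat\cap\Ker\hat\beta=1$, and the properness of $\Bhat_k$ are all correct and cleanly argued.

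The gap is exactly where you flag it, but the suggested repair does not work. You need $\Bhat_k\cap\Ker\hat\alpha$ to be normal in $\Bhat$; the argument that it is (via $\Bhat=\Bhat_k\cdot\Ker\hat\alpha$ and normality in $\Bhat_k$) additionally requires normality in $\Ker\hat\alpha$, which is automatic only in the abelian case. In the non-abelian case this genuinely fails, and ``classifying $\Ahat$-invariant subgroups of $T^n$'' does not rescue it, because such invariant subgroups need not be trivial or full. Already with $n=1$: take $\Bhat=T\times T$, $\Ahat=T$, $\hat\alpha$ the first projection (this is indecomposable since $1\times T$ is simple). Then $\Bhat'=T\times V$ for any $1\ne V\lneq T$ is a proper subgroup with $\hat\alpha(\Bhat')=\Ahat$ and $\Bhat'\cap\Ker\hat\alpha=1\times V\ne 1$; moreover $1\times V$ is $\Bhat'$-invariant but not normal in $\Bhat$. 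So the minimality of $\Ker\hat\alpha$ gives you nothing here, $\hat\alpha|_{\Bhat_k}$ need not be injective, and $\zeta_k$ need not be determined by its image; the pigeonhole then fails. To close the gap you would have to rule out such $\Bhat_k$ arising as $\hat\beta(\tau_k(\Ghat))$, which would require a further structural argument that you do not give. This is precisely the kind of case analysis that the paper avoids by routing the whole thing through Lemma~\ref{quotient of fprod not I}, where the relevant non-compactness is exhibited uniformly (and independently of whether kernels are abelian) by constructing a surjection that collapses the dropped coordinate.
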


\begin{proof}
By assumption there is
$\pihat \colon \Ghat \onto H'$
such that 
$\pi' \circ \pihat = \pi$.
By Remark~\ref{square trivialities}(c)
%\label{square trivialities}(c)
there is a commutative diagram
\begin{equation*}%\label{}
\xymatrix@=24pt{
\Hhat \arr[r]_{\etahat} \arr[d]^{\hat\beta}
\arr@/_1pc/[dd]_{\beta}
& \Ghat \arr[d]_{\pihat} \arr@/^1pc/[dd]^{\pi}
\\
\Hhat' \arr[r]^{\eta'} \arr[d]^{\beta'}
& H'
 \arr[d]_{\pi'}
\\
H \arr[r]^{\eta} & G
}
\end{equation*}
with all squares cartesian.
By Remark~\ref{EV trivialities}(e),
$\calE_f(G) \subseteq \calE_f(H') \subseteq \calE_f(\Ghat)$,
but 
$\calE_f(G) =  \calE_f(\Ghat)$,
because $\pi$ is an I-cover,
so
$\calE_f(H') = \calE_f(\Ghat)$.
By Proposition~\ref{going down}
it suffices to show that $\eta'$ is not basic.

So
$\pi'$
is the fiber product
$H' = \fprod{i \in I'} H_i \onto G$
of an infinite family
$(\eta_i \colon H_i \onto G)_{i \in I'}$
such that
$\eta_i \isom_G \eta$
for every $i \in I'$.
Let 
$\eta_\ell \colon H_\ell \onto G$
be another copy of $\eta$
and let
$I = I' \dotcup \{\ell\}$.
By \cite[Remark~3.2(e)]{fund}
%\label{properties of fiber product}(e)
we may assume that
$\Hhat' = \fprod{i \in I} H_i$
and $\eta' = \pr_{I,I'}$.
By Lemma~\ref{quotient of fprod not I},
$\eta'$ is not basic.
\end{proof}

\section{Uniqueness of a smallest embedding cover}
\label{section uniqueness}

\begin{Lemma}\label{fec is unique}
Let
$\pi \colon E \onto G$
and
$\pi' \colon E' \onto G$
be two smallest embedding covers.
Then their respective fundaments
$\bar\pi \colon \Ebar \onto G$
and
$\bar\pi' \colon \Ebar' \onto G$
are isomorphic,
that is,
there is an isomorphism
$\bar\theta \colon \Ebar \to \Ebar'$
such that
$\bar\pi' \circ \bar\theta = \bar\pi$.
\end{Lemma}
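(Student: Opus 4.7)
The plan is to reduce the claim to the comparison theorem for fundaments (Theorem~\ref{compare}) via the hereditary property of fundaments recorded in Remark~\ref{hereditary}. The key observation is that two smallest embedding covers dominate each other, because each is simultaneously an embedding cover and an I-cover: since $\pi'$ is an I-cover and $\pi$ is an embedding cover, $\pi' \mle \pi$; swapping the roles of the two covers gives $\pi \mle \pi'$ as well.

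Having established $\pi \mle \pi'$ and $\pi' \mle \pi$, I would apply Remark~\ref{hereditary} to pass to fundaments: the first domination yields $\bar\pi \mle \bar\pi'$ and the second yields $\bar\pi' \mle \bar\pi$. Next I would feed these two dominations into Theorem~\ref{compare}(i), which characterises domination of fundaments in terms of the numerical invariants $\mult_\lambda$ ($\lambda \in \Lambda(G)$) and the $F_C$-subspaces $\supp_C \subseteq H^2(G,C)$ ($C \in \Lambda_\ab(G)$). The two dominations produce opposite inequalities $\mult_\lambda(\bar\pi) \le \mult_\lambda(\bar\pi')$ and $\mult_\lambda(\bar\pi') \le \mult_\lambda(\bar\pi)$, and opposite inclusions $\supp_C(\bar\pi) \subseteq \supp_C(\bar\pi')$ and $\supp_C(\bar\pi') \subseteq \supp_C(\bar\pi)$. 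Hence equality holds throughout.

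Finally, Theorem~\ref{compare}(ii) produces the desired isomorphism $\bar\theta \colon \Ebar \to \Ebar'$ with $\bar\pi' \circ \bar\theta = \bar\pi$, completing the proof.

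Given how much machinery has already been developed, there is no serious obstacle here: the lemma is essentially a bookkeeping consequence of (i) the defining mutual-domination property of smallest embedding covers, (ii) the fact that domination descends to fundaments, and (iii) the invariant-based classification of fundamental covers. The only point that deserves a line of explanation is the mutual domination $\pi \mle \pi'$ and $\pi' \mle \pi$, which follows immediately from unpacking Definition~\ref{definition; smallest embedding cover}(b)--(d).
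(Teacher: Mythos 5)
Your proof is correct and follows essentially the same route as the paper: establish mutual domination of $\pi$ and $\pi'$ from the definitions, push it down to the fundaments via Remark~\ref{hereditary} (i.e.\ {[fund, Theorem~9.4(a)]}), and conclude. The only difference is in the final step: the paper cites {[fund, Corollary~7.3]} directly for ``two fundamental covers dominating each other are isomorphic,'' whereas you unpack that fact through the invariants of Theorem~\ref{compare}(i)--(ii); both are legitimate and the content is the same.
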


\begin{proof}
By the definition of a
smallest embedding cover
there is 
$\rho \colon E \onto E'$
such that
$\pi' \circ \rho = \pi$.
By \cite[Theorem~9.4(a)]{fund}
%\label{two fundament series}
there is
$\rhobar \colon \Ebar \onto \Ebar'$
such that
$\bar\pi' \circ \rhobar = \bar\pi$.
By a symmetrical argument
there is 
$\rhobar' \colon \Ebar' \onto \Ebar$
such that
$\bar\pi \circ \rhobar' = \bar\pi'$.
By \cite[Corollary~7.3]{fund}
%\label{from epi to iso}
there is an isomorphism
$\bar\theta \colon \Ebar \to \Ebar'$
such that
$\bar\pi' \circ \bar\theta = \bar\pi$.
\end{proof}

\begin{Proposition}\label{special sec}
Every profinite group $G$ has a 
smallest embedding cover
$\pi \colon H \onto G$,
such that
its $k$-th fundament
$\pi_k \colon G_k \onto G_{k-1}$
is an I-cover,
and its $\lambda$-multiplicity is at most $\aleph_0$,
for every $\lambda \in \Lambda(G_{k-1})$
and each $k \ge 1$.
\end{Proposition}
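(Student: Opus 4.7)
The plan is to construct the smallest embedding cover levelwise by taking, at each step, the first fundament of an auxiliary smallest embedding cover. Start with $G_0 = G$. For $k \ge 1$, inductively assume $\calE_f(G_{k-1}) = \calE_f(G)$; choose via Construction~\ref{sec transfinite} a smallest embedding cover $\sigma_k \colon E_k \onto G_{k-1}$, and let $\pi_k \colon G_k \onto G_{k-1}$ be its first fundament. Then $\pi_k$ is fundamental by Lemma~\ref{fiber product is fundamental}. Since $\pi_k \mle \sigma_k$ (the fundament is dominated by the original cover) and $\sigma_k \mle \eps$ for every embedding cover $\eps$ of $G_{k-1}$, transitivity of $\mle$ gives $\pi_k \mle \eps$, so $\pi_k$ is itself an I-cover. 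Remark~\ref{EV trivialities}(e) then yields $\calE_f(G_k) = \calE_f(G_{k-1}) = \calE_f(G)$, completing the induction step.

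Let $\pi \colon H = \varprojlim_k G_k \onto G$; by Remark~\ref{EV trivialities}(c), $\pi$ is an I-cover. The embedding property of $H$ is verified by ruling out superbasic I-covers via Lemma~\ref{embedding covers have no basic covers}: any hypothetical $\etahat \colon \Hhat \onto H$ defined by a compact cartesian square with finite $B \in \calE_f(H) = \calE_f(G)$ has $B$ factoring through some $G_m$, so by Remark~\ref{square trivialities}(c) the square descends to a compact cartesian square over $G_m$ defining a basic indecomposable cover of $G_m$. This cover is dominated by the smallest embedding cover $\sigma_{m+1}$ of $G_m$, hence by its fundament $\pi_{m+1}$, producing via $H \onto G_{m+1}$ a section of $\etahat$ that contradicts the compactness of its defining square by Proposition~\ref{indecomposable cartesian square}. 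Thus $\pi$ is a smallest embedding cover of $G$.

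To identify $\pi_k$ with the $k$-th fundament of $\pi$, note that the projection $\rho_{k-1} \colon H \onto G_{k-1}$ is itself a smallest embedding cover of $G_{k-1}$: it is an embedding cover since $H$ has the embedding property, and an I-cover by iteratively lifting every embedding cover of $G_{k-1}$ up through the I-covers $\pi_j$ for $j \ge k$. By Lemma~\ref{fec is unique}, the fundament of $\rho_{k-1}$ is isomorphic over $G_{k-1}$ to that of $\sigma_k$, namely $\pi_k$.

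It remains to verify the multiplicity bound $\mult_\lambda(\pi_k) \le \aleph_0$ for every $\lambda \in \Lambda(G_{k-1})$. Since $\pi_k$ is an I-cover, Theorem~\ref{compare}(i) gives $\mult_\lambda(\pi_k) \le \mult_\lambda(\epsbar)$ for the fundament $\epsbar$ of every embedding cover $\eps$ of $G_{k-1}$; it therefore suffices to exhibit, for each $\lambda$, a single embedding cover whose fundament has $\lambda$-multiplicity at most $\aleph_0$. This is the main obstacle of the proof: such a "narrow" auxiliary embedding cover can be produced either by a careful countable application of Construction~\ref{sec transfinite} that controls the additions of superbasic covers of each isomorphism type, or via Corollary~\ref{not basic}, which restricts the ways in which an indecomposable cover can appear with infinitely many copies inside an I-cover; making either route precise is the delicate step, requiring careful tracking of how multiplicities propagate through the tower.
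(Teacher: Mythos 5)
Your construction diverges from the paper's at the very first step, and this is exactly where the gap appears. You take $\pi_k$ to be the (full) fundament of a smallest embedding cover of $G_{k-1}$, and then at the end you try to verify $\mult_\lambda(\pi_k) \le \aleph_0$ and admit you cannot close the argument. The paper does not proceed this way: it lets $\pihat_k$ denote the fundament of a smallest embedding cover, and then \emph{defines} $\pi_k$ by truncating the multiplicities, $\mult_\lambda(\pi_k) = \min(\mult_\lambda(\pihat_k),\aleph_0)$, while keeping $\supp_C(\pi_k) = \supp_C(\pihat_k)$. Such a $\pi_k$ exists by Theorem~\ref{fundamental mult supp} / Corollary after it, and by Theorem~\ref{compare}(i) it satisfies $\pi_k \mle \pihat_k$, hence it is an I-cover by Remark~\ref{EV trivialities}(b). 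The multiplicity bound then holds by construction, not by a separate argument.

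The point you identify as ``the main obstacle'' --- exhibiting for each $\lambda$ some embedding cover whose fundament has $\lambda$-multiplicity $\le \aleph_0$ --- is not something one should hope to show directly: a priori the raw fundament $\pihat_k$ could well have uncountable multiplicities, and trying to rule this out by an independent argument is circular (it is essentially the statement being proved). The paper sidesteps this by truncating and then doing the genuine work elsewhere: (i) it shows $H = \varprojlim G_k$ still has the embedding property, which hinges on the observation (from Theorem~\ref{compare}(i)) that a fundamental cover with \emph{finite} kernel is dominated by $\pi_k$ if and only if it is dominated by $\pihat_k$ --- truncation loses nothing for finite-kernel covers, and finite embedding problems only produce such covers; and (ii) it verifies via \cite[Proposition~9.5]{fund} that \eqref{pi sequence} really is the fundament series of $\pi$, which again uses both the truncation (for the $\kappa < \aleph_0$ case) and Corollary~\ref{not basic} (for $\kappa$ infinite). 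Your sketch of Part B via Lemma~\ref{embedding covers have no basic covers}(e) is plausible in spirit, and your appeal to Lemma~\ref{fec is unique} for identifying the fundament series is close to but not the same as the paper's semi-cartesian-square criterion; but neither of these points is where the proposal fails. The missing idea is the truncation, and without it the multiplicity bound is a genuine gap.
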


\begin{proof}
The proof consists of three parts:

\subdemoinfo{Part A}
{Construction.}
Let $G_0 = G$.
Suppose, by induction on $k \ge 0$,
that we have constructed
$G_{k-1}$
and epimorphisms
$\pi_i \colon G_i \onto G_{i-1}$
for $i < k$.
Let
$\pihat_{k} \colon \Ghat_{k} \to G_{k-1}$ be the fundament of 
a smallest embedding cover of $G_{k-1}$
(by Lemma~\ref{fec is unique}
this smallest embedding cover
need not be specified).
By Remark~\ref{EV trivialities}(b),
$\pihat_{k}$ is an I-cover.

We define $\pi_{k} \colon G_{k} \to G_{k-1}$ by its 
multiplicities and supports:
$\mult_\lambda(\pi_{k}) = 
\min(\mult_\lambda(\pihat_{k}), \aleph_0)$
for every
$\lambda \in \Lambda(G_{k-1})$,
and
$\supp_C(\pi_{k}) = \supp_C(\pihat_{k})$
for every
$C \in \Lambda_\ab(G_{k-1})$.
By Theorem~\ref{compare}(1),
%\label{reinterprete}(b),
$\pi_{k} \mle \pihat_{k}$
and
every fundamental
$\zeta \colon H' \onto G_{k-1}$
with finite kernel
(and, hence, with finite multiplicities)
is dominated by $\pi_{k}$
if and only if it
is dominated by $\pihat_{k}$.
By Remark~\ref{EV trivialities}(b),
$\pi_{k}$ is an I-cover.

Let $\pi \colon H \onto G_0$ be the inverse limit of 
\begin{equation}\label{pi sequence}
\xymatrix{
\cdots \arr[r]^{\pi_3}
& G_2 \arr[r]^{\pi_2}
& G_1 \arr[r]^{\pi_1}
& G_0
}
\end{equation}
and let
$\rho_k \colon H \onto G_k$
be its projections,
for every $k \ge 0$.
By Remark~\ref{EV trivialities}(c),
$\pi$ is an I-cover.
By Remark~\ref{EV trivialities}(e),
$\calE_f(H) = \calE_f(G_k)$,
for every $k \ge 0$.

\subdemoinfo{Part B}
{$H$ has the embedding property.}
Indeed,
let $(\phi \colon H \onto A, \alpha \colon B \onto A)$
be a finite embedding problem,
with $\alpha$ indecomposable
and $B \in \calE_f(H)$.
As $A$ is finite,
there is $k$ and an epimorphism
$\phi_k \colon G_k \onto A$
such that
$\phi = \phi_k \circ \rho_k$.
If $(\phi_k, \alpha)$ has a solution $\gamma_k$,
then $\gamma_k \circ \rho_k$ solves $(\phi, \alpha)$.

If not,
the cartesian square
defined by $\phi_k$ and $\alpha$
(in the diagram below on the left,
with $\beta$ and $\zeta$)
is compact,
by Proposition~\ref{indecomposable cartesian square},
and hence $\zeta$ is basic.
So $\zeta$ is an indecomposable I-cover,
and therefore
$\zeta \mle \pihat_{k+1}$.
As noted above, this implies that
$\zeta \mle \pi_{k+1}$,
that is, there is
$\theta' \colon G_{k+1} \onto H'$
such that
$\zeta \circ \theta' = \pi_{k+1}$.
\begin{equation*}
\xymatrix{
& G_{k+1} \dotarr[dl]_{\theta'} \arr[d]^{\pi_{k+1}}
& & H  \arr@/^2pc/[ddll]^{\phi}
 \arr@/^1pc/[dll]^{\rho_k}
 \arr[ll]_{\rho_{k+1}}
\\
H' \arr[r]^{\zeta} \arr[d]^{\beta} & G_k \arr[d]_{\phi_k} \\
B \arr[r]^{\alpha} & A
}
\quad
\quad
\quad
\quad
\xymatrix@=12pt{
G_{k+1} \dotarr[dr]^{\gamma} \arr@/^1pc/[drrr]^{\pi_{k+1}}
\arr@/_1pc/[dddr]_{\tau}
\\
& H_k \arr[rr]^{\eta_{k}} \arr[dd]_{\beta_k} \dotarr[rrdd]^{\xi_k}
&& G_k \arr[dd]^{\pi_k}
\\
\\
& H_{k-1} \arr[rr]_{\eta_{k-1}} && G_{k-1}
}
\end{equation*}
Then $\beta \circ \theta' \circ \rho_{k+1}$
is a solution to 
$(\phi, \alpha)$.

\subdemoinfo{Part C}
{\eqref{pi sequence}
is the fundament sequence of $\pi \colon H \onto G_0$.}
%$M_k(\pi) = \Ker \rho_k$, for every $k$.
By \cite[Proposition~9.5]{fund}
%\label{fundament series characterization}.
it suffices to show that
for no $k \ge 1$ there is a semi-cartesian square
\begin{equation}\label{small}
\xymatrix{
G_{k+1} \arr[r]^{\pi_{k+1}} \arr[d]_{\tau} & G_k \arr[d]^{\pi_k}
\\
H_{k-1} \arr[r]_{\eta_{k-1}} & G_{k-1}
}
\end{equation}
with indecomposable $\eta_{k-1}$.

Suppose there is such a diagram.
Let $H_k = H_{k-1} \times_{G_{k-1}} G_k$
with projections 
$\beta_k$ and $\eta_k$
(the diagram above on the right)
and put
$\xi_k = \pi_k \circ \eta_k  = \eta_{k-1} \circ \beta_k
\colon H_k \onto G_{k-1}$.

By Lemma~\ref{expand},
the unique 
$\gamma \colon G_{k+1} \to H_k$
such that the diagram commutes
is surjective.
As $\pi_k, \pi_{k+1}$ are I-covers,
by Remark~\ref{EV trivialities}(b),(c)
so are
$\eta_k$,
$\pi_k \circ \pi_{k+1}$,
and hence so are
$\eta_{k-1}$ and $\xi_k$.
By Proposition~\ref{I is basic},
$\eta_k$ and $\eta_{k-1}$ are basic.

For a cardinality $\kappa$ let
$\eta_{k-1}^{(\kappa)}$
denote
the fiber product of $\kappa$ copies of $\eta_{k-1}$.
By Corollary~\ref{dominates multiple of one},
the set
$\{\kappa \st \eta_{k-1}^{(\kappa)} \mle \pi_k\}$
has a maximum;
let $\kappa$ be this maximum.
If $\kappa$ is infinite,
then,
by Corollary~\ref{not basic},
$\eta_{k}$ is not basic, a contradiction.
Suppose $\kappa < \aleph_0$.
By Lemma~\ref{expand},
$\xi_k \colon H_k \onto G_{k-1}$
dominates
the fiber product
of $\eta_{k-1}$ and 
$\eta_{k-1}^{(\kappa)}$,
which,
by \cite[Remark 3.2]{fund},
%\label{properties of fiber product}
is isomorphic to
$\eta_{k-1}^{(\kappa+1)}$.
As a fundamental I-cover,
$\xi_k$ is dominated by $\pihat_k$,
so,
$\eta_{k-1}^{(\kappa+1)} \mle \pihat_k$.
But,
as remarked in Part A,
this implies
$\eta_{k-1}^{(\kappa+1)} \mle \pi_k$,
a contradiction to the definition of $\kappa$.
\end{proof}

\begin{Lemma}\label{I dominates basic}
A fundamental I-cover with finite kernel is basic.
\end{Lemma}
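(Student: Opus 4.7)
The plan is to reduce to Proposition~\ref{fprod of basics is basic}. Let $\pi \colon H \onto G$ be a fundamental I-cover with $\Ker \pi$ finite. Since $\pi$ is fundamental, Lemma~\ref{fiber product is fundamental} gives a family $(\eta_i \colon H_i \onto G)_{i \in I}$ of indecomposable extensions of $G$ with $\pi \isom_G \eta_I \colon \fprod{i \in I} H_i \onto G$.

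First I would show that $I$ is finite. The kernel of the fiber product is (isomorphic to) the direct product $\prod_{i \in I} \Ker \eta_i$, each factor of which is nontrivial because $\eta_i$ is indecomposable; since $\Ker \pi$ is finite, only finitely many $i$ can contribute, forcing $|I| < \infty$.

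Next I would verify the two hypotheses of Proposition~\ref{fprod of basics is basic}. For the first, $H \in \calE(G)$: take any smallest embedding cover $\eps \colon E \onto G$; as $\pi$ is an I-cover, there is $\rho \colon E \onto H$ with $\pi \circ \rho = \eps$, exhibiting $H$ as a quotient of $E$, so $H \in \calE(G)$. For the second, I need that every indecomposable cover $\eta \colon H' \onto G$ dominated by $\pi$ is basic. The key observation is that \emph{any cover dominated by an I-cover is again an I-cover}: writing $\pi = \eta \circ \psi$ for some $\psi \colon H \onto H'$, and given an embedding cover $\eps \colon E \onto G$ with $\rho \colon E \onto H$ satisfying $\pi \circ \rho = \eps$, the composition $\psi \circ \rho \colon E \onto H'$ satisfies $\eta \circ \psi \circ \rho = \eps$, so $\eta$ is an I-cover. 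Being indecomposable, $\eta$ is basic by Proposition~\ref{I is basic}.

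Both hypotheses of Proposition~\ref{fprod of basics is basic} hold, so $\pi \isom_G \eta_I$ is basic. I do not anticipate a serious obstacle; the only mildly delicate point is the finiteness of $I$, but it follows immediately from the description of $\Ker \eta_I$ as a product of the $\Ker \eta_i$.
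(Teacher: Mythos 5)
Your proof is correct and takes essentially the same route as the paper: both reduce to Proposition~\ref{fprod of basics is basic} by verifying that $H \in \calE(G)$ (since $H$ is a quotient of a smallest embedding cover) and that every indecomposable cover dominated by $\pi$ is basic (via Proposition~\ref{I is basic}). The only cosmetic differences are that you reprove from scratch the observation that a cover dominated by an I-cover is an I-cover, which the paper cites directly as Remark~\ref{EV trivialities}(b), and you make explicit the (correct) finiteness of the index set, which the paper leaves implicit.
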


\begin{proof}
Let $\tau \colon H \onto G$
be a fundamental I-cover with finite kernel.
Let $\eta$ be an indecomposable epimorphism,
$\eta \mle \tau$.
By Remark~\ref{EV trivialities}(b),
$\eta$ is an I-cover.
In particular,
$H \in \calE(G)$.
By Proposition~\ref{I is basic}, $\eta$ is basic.
By Theorem~\ref{fundamental mult supp},
$\tau$ is a fiber product of
indecomposable epimorphisms dominated by $\tau$.
Thus, by Proposition~\ref{fprod of basics is basic},
$\tau$ is basic.
\end{proof}

\begin{Theorem}\label{aleph}
Let \eqref{pi sequence}
be the fundament series of a smallest embedding cover
$\pi \colon H \onto G$.
Let $k \ge 1$.
Then
\begin{itemize}
\item[(a)]
$\mult_\lambda(\pi_k) \le \aleph_0$
for every $\lambda \in \Lambda(G_{k-1})$;
\item[(b)]
every cover of $G_{k-1}$ with finite kernel
dominated by $\pi_k$ is basic.
\end{itemize}
\end{Theorem}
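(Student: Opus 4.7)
My plan is to first establish that for every $k \ge 1$, the natural epimorphism $H \onto G_{k-1}$ is itself a smallest embedding cover of $G_{k-1}$; once this is done, both (a) and (b) will follow from Lemma~\ref{fec is unique} and Proposition~\ref{special sec}. The main obstacle is the I-cover half of this claim: while $H$ inherits the embedding property and so $H \onto G_{k-1}$ is automatically an embedding cover, a direct attempt to lift an embedding cover of $G_{k-1}$ to $H$ through the SEC property of $\pi$ only yields agreement modulo $\Ker(G_{k-1} \onto G_0)$. I will repair this via a compactness argument across the finite quotients of $H$.

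In detail, let $\eps \colon E \onto G_{k-1}$ be an arbitrary embedding cover and write $\rho_{k-1} \colon H \onto G_{k-1}$ for the canonical projection. The quotient $G_{k-1} \onto G_0$ is an I-cover by Remark~\ref{EV trivialities}(b) applied to $\pi = (G_{k-1} \onto G_0) \circ \rho_{k-1}$, so Remark~\ref{EV trivialities}(e) gives $\calE_f(G_{k-1}) = \calE_f(G_0)$. For each open $U \normal H$ with image $V = \rho_{k-1}(U)$ in $G_{k-1}$, the finite quotient $H/U$ lies in $\calE_f(G_0) = \calE_f(G_{k-1}) \subseteq \calE_f(E)$, so the embedding property of $E$ solves the finite embedding problem posed by $\eps$ modulo $V$ and $H/U \onto G_{k-1}/V$, producing $\psi_U \colon E \onto H/U$. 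The finite nonempty solution sets assemble into an inverse system in $U$; by compactness their inverse limit is nonempty and yields a continuous $\psi \colon E \to H$ with $\rho_{k-1} \circ \psi = \eps$. The map $\psi$ is surjective because its image is closed in $H$ and projects onto every $H/U$.

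With $H \onto G_{k-1}$ identified as a smallest embedding cover of $G_{k-1}$, Lemma~\ref{fec is unique} shows that $\pi_k$---being the fundament of this SEC---is isomorphic over $G_{k-1}$ to the first fundament of the specific smallest embedding cover of $G_{k-1}$ built by Proposition~\ref{special sec}. That fundament has $\lambda$-multiplicity at most $\aleph_0$ and is an I-cover of $G_{k-1}$; this establishes (a) and gives the crucial bonus that $\pi_k$ itself is an I-cover of $G_{k-1}$.

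For (b), let $\zeta \colon H'' \onto G_{k-1}$ have finite kernel and be dominated by $\pi_k$, with $\psi \colon G_k \onto H''$ satisfying $\zeta \circ \psi = \pi_k$. By Lemma~\ref{fiber product is fundamental} write $\pi_k = \eta_I$ as the fiber product of a family of indecomposable extensions $(\eta_i \colon H_i \onto G_{k-1})_{i \in I}$, so that $\Ker \pi_k = \prod_{i \in I} \Ker \eta_i$ inside $G_k$. Since $\Ker \pi_k / \Ker \psi \cong \Ker \zeta$ is finite, $\Ker \psi$ is open in $\Ker \pi_k$ and therefore contains $\prod_{i \notin I_0} \Ker \eta_i$ for some finite $I_0 \subseteq I$. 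Consequently $\psi$ factors through $\pr_{I,I_0}$, which gives $\zeta \mle \eta_{I_0}$. The finite fiber product $\eta_{I_0}$ is fundamental with finite kernel and, by Remark~\ref{EV trivialities}(b) applied to $\pi_k = \eta_{I_0} \circ \pr_{I,I_0}$, an I-cover; hence $\eta_{I_0}$ is basic by Lemma~\ref{I dominates basic}, and Lemma~\ref{compose basic} then forces $\zeta$ to be basic as well.
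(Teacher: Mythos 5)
Your proof takes a genuinely different route from the paper's. Rather than comparing the given fundament series to the special one of Proposition~\ref{special sec} via a ladder of semi-cartesian squares (the paper's use of \cite[Theorems 9.4(b) and 7.4]{fund}), you reduce everything to the claim that $\rho_{k-1}\colon H \onto G_{k-1}$ is itself a smallest embedding cover of $G_{k-1}$, and then invoke Lemma~\ref{fec is unique} to transport the properties of the special SEC's first fundament to $\pi_k$. Your argument for (b) is also pleasantly concrete: you pass to a finite sub-fiber-product $\eta_{I_0}$, which is genuinely fundamental, an I-cover, and has finite kernel, so Lemma~\ref{I dominates basic} applies directly, and then descend to $\zeta$ via Lemma~\ref{compose basic}; the paper instead routes through the semi-cartesian square, Lemma~\ref{expand} and Proposition~\ref{going down}.

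There is, however, a genuine gap in the I-cover half of your first step. You assert that "the finite nonempty solution sets assemble into an inverse system" and invoke compactness, but the sets $S_U$ of solutions $\psi_U\colon E \onto H/U$ to the finite embedding problems need not be finite: an embedding cover $E$ can have infinitely many open normal subgroups with a prescribed finite quotient (for instance $E$ a free profinite group of infinite rank, which has the embedding property), and then there are infinitely many $\psi_U$ compatible with $\bar\eps_V$. Nor is $S_U$ obviously compact in a topology making the transition maps continuous: viewed inside $(H/U)^{E}$ with the product topology, neither continuity nor surjectivity of $\psi_U$ is a closed condition. The underlying claim---that the embedding property of $E$ solves embedding problems whose target $H$ is an arbitrary profinite, not merely finite, quotient of $E$---is in fact true, but it requires a Zorn-type induction on partial solutions along closed normal subgroups of $H$, not the naive inverse limit of solution sets; this is the kind of statement one finds in the embedding-property section of Fried--Jarden. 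If you cite such a result, or replace the compactness paragraph with that Zorn argument, the remainder of your proof is sound.
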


\begin{proof}
We switch the notation:
Let $G' = G$.
Let
$\pi' \colon H' \onto G'$ be a
smallest embedding cover of $G'$
and let
$
\xymatrix{
\cdots \arr[r]^{\pi'_3}
& G'_2 \arr[r]^{\pi'_2}
& G'_1 \arr[r]^{\pi'_1}
& G'_0 = G'
}
$
be its fundament series.
We prove (a) and (b) for this cover.

Let \eqref{pi sequence}
be the fundament series of the special smallest embedding cover
of Proposition~\ref{special sec}.
$\pi \colon H \onto G$.
There is $\theta \colon H \onto H'$
such that
$\pi' \circ \theta = \pi$.
By \cite[Theorem~9.4(b)]{fund}
%\label{two fundament series}
there is a commutative diagram
\begin{equation}\label{two series diagram}
\xymatrix{
H \arr@/^1.1pc/[rr]_{\rho_k}
\arr@/^1.7pc/[rrr]^(.7){\rho_{k-1}}
\arr[d]_{\theta}
& \cdots \arr[r]_{\pi_{k+1}}
& G_{k} \arr[r]^{\pi_{k}}
\arr[d]_{\theta_{k}}
& G_{k-1} \arr[r]_{\pi_{k-1}}
\arr[d]_{\theta_{k-1}}
& \cdots \arr[r]_{\pi_{3}}
& G_{2} \arr[r]_{\pi_{2}}
\arr[d]_{\theta_{2}}
& G_{1} \arr[r]_{\pi_{1}}
\arr[d]_{\theta_{1}}
& G_{0}
\ar@{=}[d]_{\theta_{0}}
\\
H' \arr@/_1.1pc/[rr]^{\rho'_k}
\arr@/_1.7pc/[rrr]_(.7){\rho'_{k-1}}
& \cdots \arr[r]^{\pi'_{k+1}}
& G'_{k} \arr[r]^{\pi'_{k}}
& G'_{k-1} \arr[r]^{\pi'_{k-1}}
& \cdots \arr[r]^{\pi'_{3}}
& G'_{2} \arr[r]^{\pi'_{2}}
& G'_{1} \arr[r]^{\pi'_{1}}
& G'_{0}
}
\end{equation}
with semi-cartesian squares.

(a)
Let $\lambda' \in \Lambda(G'_{k-1})$.
By \cite[Theorem~7.4]{fund},
%\label{epimorphism},
$\mult_{\lambda'}(\pi'_k) \le \mult_{\lambda}(\pi_k)$
for some
$\lambda \in \Lambda(G_{k-1})$.
By Proposition~\ref{special sec},
$\mult_{\lambda}(\pi_k) \le \aleph_0$,
hence
$\mult_{\lambda'}(\pi'_k) \le \aleph_0$.

(b)
Let
$\tau' \colon D' \onto G'_{k-1}$
be a cover of $G'_{k-1}$ with finite kernel,
dominated by $\pi'_k$.
Thus there is $\gamma' \colon G'_k \onto D'$
such that
$\pi'_k = \tau' \circ \gamma'$.
\begin{equation*}
\xymatrix@=24pt{
G_k \arr[rr]_{\theta_k} \arr[d]^{\gamma}
\arr@/_1pc/[dd]_{\pi_k}
&& G'_k \arr[d]_{\gamma'} \arr@/^1pc/[dd]^{\pi'_k}
\\
D \arr[rr]^{\thetabar} \arr[d]^{\tau} && D' \arr[d]_{\tau'}
\\
G_{k-1} \arr[rr]^{\theta_{k-1}} && G'_{k-1}
}
\end{equation*}
Let $D = G_{k-1} \times_{G'_{k-1}} D'$,
with projections $\tau$ and $\thetabar$.
Then
$\Ker \tau \isom \Ker \tau'$ is finite
and
there is $\gamma \colon G_k \to D$ that makes the above
diagram commutative.
The square
$(\theta_k, \pi'_k, \pi_k, \theta_{k-1})$
is semi-cartesian,
hence, by Lemma~\ref{expand},
$\gamma$ is surjective.
By Proposition~\ref{special sec},
$\pi_k$ is an I-cover,
hence
by Lemma~\ref{I dominates basic},
$\tau$ is basic.
Obviously
$\calE_f(G_{k-1}) =  \calE_f(G) = \calE_f(G'_{k-1})$,
hence by Proposition~\ref{going down},
$\tau'$ is basic.
\end{proof}

\begin{Corollary}\label{finite quotients of sec-k}
Let
$\pi \colon H \onto G$
be a smallest embedding cover,
and let
\eqref{pi sequence}
be its fundament series.
Let $k \ge 1$
and let $\rho_{k-1} \colon H \onto G_{k-1}$
be the projection.
Let $\tau \colon H' \onto G_{k-1}$
be fundamental, with finite kernel.
The following are equivalent:
\begin{itemize}
\item[(a)]
$\tau$ is basic.
\item[(b)]
$\tau$ is an I-cover.
\item[(c)]
$\rho_{k-1}$ dominates $\tau$.
\item[(d)]
$\pi_k$ dominates $\tau$.
\end{itemize}
\end{Corollary}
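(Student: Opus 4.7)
The plan is to close the cycle (a) $\implies$ (b) $\implies$ (c) $\implies$ (d) $\implies$ (a). Three of the four implications are essentially formal, and the only one carrying content has been prepared as Theorem~\ref{aleph}(b).

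For (a) $\implies$ (b), I would simply cite Definition~\ref{basic cover}, where basic covers are declared to be I-covers. For (b) $\implies$ (c), the point is that the hypothesis ``$\pi$ is a smallest embedding cover of $G$'' means $H$ itself has the embedding property; hence the projection $\rho_{k-1} \colon H \onto G_{k-1}$ is an embedding cover of $G_{k-1}$, and by Definition~\ref{definition; smallest embedding cover}(c) every embedding cover dominates every I-cover, so $\rho_{k-1}$ dominates $\tau$. For (c) $\implies$ (d), I would use that $\pi_k$ is the fundament of $\rho_{k-1}$ (this is the defining feature of the fundament series), so, since $\tau$ is fundamental, Remark~\ref{hereditary} converts $\tau \mle \rho_{k-1}$ into $\tau \mle \pi_k$.

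The step that might look hardest is (d) $\implies$ (a), and this is where I expect the ``main obstacle'' to sit in principle. However, the entire weight of this implication has already been packaged into Theorem~\ref{aleph}(b), which states precisely that every cover of $G_{k-1}$ with finite kernel dominated by $\pi_k$ is basic. Consequently the proof of the corollary reduces to a short assembly of Definition~\ref{basic cover}, the embedding property of $H$, Remark~\ref{hereditary}, and Theorem~\ref{aleph}(b), with no genuinely new obstacle beyond what Theorem~\ref{aleph} has already resolved.
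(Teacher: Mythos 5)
Your proposal is correct and follows the paper's proof step for step: (a) implies (b) by Definition~\ref{basic cover}, (b) implies (c) from the embedding property of $H$, (c) implies (d) from the fact that $\pi_k$ is the fundament of $\rho_{k-1}$ combined with Remark~\ref{hereditary} (the paper cites the underlying result \cite[Theorem 9.4(a)]{fund} directly, which is the same fact), and (d) implies (a) is Theorem~\ref{aleph}(b). No discrepancies.
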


\begin{proof}
(a) $\implies$ (b):
Clear.

\noindent
(b) $\implies$ (c):
As $\rho_{k-1}$ is an embedding cover,
it dominates an I-cover.

\noindent
(c) $\implies$ (d):
There is 
$\theta \colon H \onto H'$
such that
$\tau \circ \theta = \rho_{k-1}$.
Now,
$\pi_k$ is the fundament of $\rho_{k-1}$;
As $\tau$ is fundamental,
its fundament is $\tau$.
Thus by \cite[Theorem~9.4(a)]{fund}
%\label{two fundament series}
there is
$\theta_1 \colon G_1 \onto H'$
such that
$\tau \circ \theta_1 = \pi_k$.

\noindent
(d) $\implies$ (a):
This is Theorem~\ref{aleph}(b).
\end{proof}

For an indecomposable cover $\eta$ of $G$
and an integer $n \ge 0$
let
$\eta^{(n)}$,
denote the fiber product
of $n$ copies of $\eta$.

\begin{Theorem}\label{complete characterization}
Let \eqref{pi sequence}
be the fundament series of a smallest embedding cover
$\pi \colon H \onto G$.
Let $k \ge 1$.
Then:
\begin{itemize}
\item[(a)]
If $\eta \in \Lambda_\na(G_{k-1})$,
then $\mult_\eta(\pi_k) =
\sup(n \ge 0 \st \eta^{(n)} \textnormal{ is basic })$.
\item[(b)]
If $A \in \Lambda_\ab(G_{k-1})$,
then $\mult_A(\pi_k) =
\sup(n \ge 0 \st \eta^{(n)} \textnormal{ is basic })$,
where $\eta$ is the split extension of $G_{k-1}$
with kernel $A$.
\item[(c)]
for every $A \in \Lambda_\ab(G_{k-1})$
and
every $\eta \in H^2(G_{k-1},A) \smallsetminus \{0\}$
we have
$\eta \in \supp_A(\pi_k)$
if and only if
$\eta$ is basic.
\end{itemize}
\end{Theorem}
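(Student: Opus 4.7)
The plan is to deduce all three parts by combining Corollary~\ref{finite quotients of sec-k}, which identifies basic fundamental covers with finite kernel as precisely those dominated by $\pi_k$, with Corollary~\ref{dominates multiple of one}, which translates domination by a fundamental cover into explicit conditions on multiplicities and supports. In each case the candidate cover is a fiber product of indecomposable covers with a common finite kernel, hence fundamental by Lemma~\ref{fiber product is fundamental} with finite kernel, so Corollary~\ref{finite quotients of sec-k} applies directly.

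For part (a), fix $\eta \in \Lambda_\na(G_{k-1})$ and an integer $n \ge 0$; then $\eta^{(n)}$ is fundamental with finite kernel. Corollary~\ref{finite quotients of sec-k} gives $\eta^{(n)}$ basic $\iff \eta^{(n)} \mle \pi_k$, and Corollary~\ref{dominates multiple of one}(a) translates the latter to $n \le \mult_\eta(\pi_k)$. Since $\mult_\eta(\pi_k) \le \aleph_0$ by Theorem~\ref{aleph}(a), taking the supremum over such $n$ recovers $\mult_\eta(\pi_k)$. Part (b) is handled by the identical argument applied to $\eta = \sigma_A^{(0)}$: the class $\eta$ represents $0 \in H^2(G_{k-1}, A)$, which lies in the $F_A$-subspace $\supp_A(\pi_k)$; hence the split clause of Corollary~\ref{dominates multiple of one}(b) gives $\eta^{(n)} \mle \pi_k \iff n \le \mult_A(\pi_k)$, and the sup equals $\mult_A(\pi_k)$.

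For part (c), take $\eta \in H^2(G_{k-1}, A) \smallsetminus \{0\}$, representing a non-split indecomposable cover with finite abelian kernel $A$. Corollary~\ref{finite quotients of sec-k} gives $\eta$ basic $\iff \eta \mle \pi_k$. I would then apply Corollary~\ref{dominates multiple of one}(b) with $\kappa = 1$ in the non-split case: the condition $1 \le \mult_A(\pi_k) + 1$ holds automatically whenever $\eta \in \supp_A(\pi_k)$, while $\eta \notin \supp_A(\pi_k)$ forces $\kappa = 0$, which is incompatible with $\kappa = 1$. Hence $\eta \mle \pi_k \iff \eta \in \supp_A(\pi_k)$, which is precisely (c).

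No substantial obstacle remains, as the deep content has already been absorbed into Corollary~\ref{finite quotients of sec-k}, which in turn rests on the construction of the special smallest embedding cover in Proposition~\ref{special sec} together with Proposition~\ref{fprod of basics is basic} and Lemma~\ref{I dominates basic}. The only care needed is to verify in each instance that the fundamentality and finite-kernel hypotheses of Corollary~\ref{finite quotients of sec-k} are met, and to correctly track the split versus non-split dichotomy in Corollary~\ref{dominates multiple of one}(b).
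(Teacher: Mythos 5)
Your proof is correct and takes essentially the same approach as the paper: invoke Theorem~\ref{aleph} to bound the multiplicities by $\aleph_0$, use Corollary~\ref{dominates multiple of one} to translate domination by $\pi_k$ into the multiplicity/support inequalities, and apply Corollary~\ref{finite quotients of sec-k} to identify domination by $\pi_k$ with being basic. The only difference is that you spell out the case analysis of Corollary~\ref{dominates multiple of one}(b) in parts~(b) and~(c) more explicitly, whereas the paper records the consequence of that case analysis in a single sentence; the substance is identical.
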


\begin{proof}
(a)
By Theorem~\ref{aleph},
$\mult_\eta(\pi_k) \le \aleph_0$.
Therefore,
by Corollary~\ref{dominates multiple of one}(a),
$\mult_\eta(\pi_k) = \sup(n \ge 0 \st \eta^{(n)} \mle \pi)$.
So (a) follows by
Corollary~\ref{finite quotients of sec-k}.

(b)
Similarly,
with
Corollary~\ref{dominates multiple of one}(b)
instead of
Corollary~\ref{dominates multiple of one}(a).

(c)
By Corollary~\ref{dominates multiple of one}(b),
$\supp_A(\pi_k) \smallsetminus \{0\}$ consists of
all indecomposable non-split extensions of $G$
with kernel $A$
dominated by $\pi_k$.
Hence (c) follows from Corollary ~\ref{finite quotients of sec-k}.
\end{proof}

\begin{Theorem}\label{uniqueness}
A smallest embedding cover
of a profinite group
is unique,
up to an isomorphism.
\end{Theorem}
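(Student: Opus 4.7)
The plan is to build an isomorphism $\theta\colon H \to H'$ between two smallest embedding covers $\pi\colon H \onto G$ and $\pi'\colon H' \onto G$ by constructing compatible isomorphisms between the terms of their fundament series layer by layer, and then passing to the inverse limit. Let $(\pi_k\colon G_k \onto G_{k-1})_{k \ge 1}$ and $(\pi'_k\colon G'_k \onto G'_{k-1})_{k \ge 1}$ be the respective fundament series, with $G_0 = G'_0 = G$ and with projections $\rho_k\colon H \onto G_k$ and $\rho'_k\colon H' \onto G'_k$. By Remark~\ref{EV trivialities}(b),(c),(e), each $\rho_{k-1}$ is itself a smallest embedding cover of $G_{k-1}$, and likewise for $\rho'_{k-1}$ over $G'_{k-1}$, so $\pi_k$ (respectively $\pi'_k$) is the fundament of a smallest embedding cover of $G_{k-1}$ (respectively $G'_{k-1}$).

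I will construct isomorphisms $\theta_k\colon G_k \to G'_k$ with $\pi'_k \circ \theta_k = \theta_{k-1}\circ \pi_k$ by induction on $k$, beginning with $\theta_0 = \id_G$. Given $\theta_{k-1}$, the map $\theta_{k-1}^{-1}\circ \rho'_{k-1}\colon H' \to G_{k-1}$ is still a smallest embedding cover of $G_{k-1}$ (the embedding property is intrinsic to $H'$, and composition with an isomorphism of the base preserves the I-cover property), and its fundament is $\bar\pi := \theta_{k-1}^{-1}\circ \pi'_k\colon G'_k \to G_{k-1}$. Thus $\pi_k$ and $\bar\pi$ are two fundaments of smallest embedding covers of the same group $G_{k-1}$. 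Applying Theorem~\ref{complete characterization} to each of them, the invariants $\mult_\lambda$ and $\supp_C$ for $\lambda \in \Lambda(G_{k-1})$ and $C \in \Lambda_\ab(G_{k-1})$ are expressed purely in terms of basicness of extensions of $G_{k-1}$, and basicness depends only on $G_{k-1}$ and $\calE_f(G_{k-1})$. Hence $\mult_\lambda(\pi_k) = \mult_\lambda(\bar\pi)$ and $\supp_C(\pi_k) = \supp_C(\bar\pi)$ for all relevant $\lambda, C$, and Theorem~\ref{compare}(ii) supplies an isomorphism $\theta_k\colon G_k \to G'_k$ with $\theta_{k-1}^{-1}\circ \pi'_k \circ \theta_k = \pi_k$, as required.

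The inverse limit of the compatible system $(\theta_k)_{k \ge 0}$ is then an isomorphism
\[
\theta\colon H = \varprojlim_k G_k \;\longrightarrow\; \varprojlim_k G'_k = H'
\]
satisfying $\pi' \circ \theta = \pi$, which is the desired uniqueness assertion.

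The main obstacle, already resolved by the machinery assembled in Section~\ref{section basic} and Section~\ref{section uniqueness}, is to know that the invariants $\mult_\lambda(\pi_k)$ and $\supp_C(\pi_k)$ are intrinsic to $G_{k-1}$, rather than depending on the particular smallest embedding cover of $G_{k-1}$ from which $\pi_k$ arises; this is precisely Theorem~\ref{complete characterization}, and it relies in turn on Proposition~\ref{special sec} (existence of a smallest embedding cover with $\aleph_0$-bounded multiplicities), Corollary~\ref{finite quotients of sec-k} (identifying fundamental I-covers with finite kernel as basic covers), and the well-definedness of $\calE_f(G_{k-1})$ from Remark~\ref{EV trivialities}(a). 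Everything else — the inductive bookkeeping and the inverse limit — is formal.
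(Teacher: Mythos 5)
Your proof follows the same overall strategy as the paper — use Theorem~\ref{complete characterization} to see that the invariants of the $k$-th fundament are intrinsic to $G_{k-1}$, then Theorem~\ref{compare}(ii) to lift to an isomorphism, then pass to the inverse limit — and you usefully flesh out the inductive bookkeeping that the paper leaves implicit. However, there is one step whose justification does not go through as cited.

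You claim that ``each $\rho_{k-1}$ is itself a smallest embedding cover of $G_{k-1}$,'' citing Remark~\ref{EV trivialities}(b),(c),(e). Those remarks do give that $\rho_{k-1}$ is an embedding cover (since $H$ has the embedding property) and that $\calE_f(G_{k-1}) = \calE_f(G)$, but none of them yield that $\rho_{k-1}$ is an I-cover of $G_{k-1}$. Remark~\ref{EV trivialities}(b) goes only one direction (from $\pi_1 \circ \pi_2$ being an I-cover to $\pi_1$ being one, not to $\pi_2$), and (c) composes or takes inverse limits of maps already known to be I-covers — which is not established here for the $\pi_j$ of a \emph{general} smallest embedding cover (Proposition~\ref{special sec} only asserts this for the specially constructed one; Theorem~\ref{aleph} gives bounds on multiplicities and basicness of finite quotients, not I-coverness of $\pi_k$ itself, and Lemma~\ref{I-cover}, which would settle it, comes after — and uses — uniqueness). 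You then use this unestablished claim to place $\pi_k$ and $\bar\pi$ in the hypotheses of Theorem~\ref{complete characterization} at index $1$ over $G_{k-1}$.

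Fortunately the step is also unnecessary. Theorem~\ref{complete characterization} is stated for the fundament series of a smallest embedding cover $\pi\colon H \onto G$ at arbitrary index $k$, and directly characterizes $\mult_\lambda(\pi_k)$ and $\supp_C(\pi_k)$ in terms of basicness over $G_{k-1}$. Apply it once to $\pi\colon H \onto G$ at index $k$ and once to $\pi'\colon H' \onto G$ at index $k$; then transport the second along $\theta_{k-1}$ (which preserves basicness, since basicness depends only on the base group and $\calE_f$), compare the invariants, and invoke Theorem~\ref{compare}(ii). This yields the same $\theta_k$ without any assertion about $\rho_{k-1}$ being a smallest embedding cover of $G_{k-1}$, and matches the paper's intent. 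With that replacement your argument is correct.
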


\begin{proof}
Let \eqref{pi sequence}
be the fundament series of a smallest embedding cover
$\pi \colon H \onto G$.
By Theorem~\ref{complete characterization},
the multiplicities and supports of each
$\pi_k \colon G_k \onto G_{k-1}$
are uniquely determined
by the basic covers of $G_{k-1}$.
By Theorem~\ref{compare}(ii),
$\pi_k$ is determined,
up to a $G_{k-1}$-isomorphism,
by its multiplicities and supports.
Thus $\pi_k$ is uniquely determined.
Therefore $\pi$ is uniquely determined.
\end{proof}

\section{Examples}\label{examples}

We want to construct examples of I-covers.
We begin with a characterization:

\begin{Lemma}\label{I-cover}
An epimorphism
$\pi' \colon H' \onto G$
is an I-cover
if and only if
\begin{itemize}
\item[(a)]
$\mult_\lambda(\pi') \le \aleph_0$,
for every $\lambda \in \Lambda(G$);
and
\item[(b)]
Let $\eta$ be an indecomposable cover of $G$
and $n \ge 0$ an integer.
If $\eta^{(n)} \mle \pi'$,
then $\eta^{(n)}$ is basic.
\end{itemize}
\end{Lemma}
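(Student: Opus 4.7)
Suppose $\pi'$ is an I-cover. Let $\pi \colon H \onto G$ be the smallest embedding cover from Proposition~\ref{special sec}, with fundament series \eqref{pi sequence}. Since $\pi$ is an embedding cover of $G$ and $\pi'$ is an I-cover, $\pi' \mle \pi$; by Remark~\ref{hereditary} this yields $\pi'_1 \mle \pi_1$, so Theorem~\ref{compare}(i) gives $\mult_\lambda(\pi'_1) \le \mult_\lambda(\pi_1)$ for each $\lambda \in \Lambda(G)$, and Theorem~\ref{aleph}(a) caps the right-hand side at $\aleph_0$, proving (a). For (b), let $\eta$ be indecomposable and $\eta^{(n)} \mle \pi'$. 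By Lemma~\ref{fiber product is fundamental} the fiber product $\eta^{(n)}$ is fundamental (with finite kernel, since $\eta$ has finite kernel), so Remark~\ref{hereditary} together with $\eta^{(n)} \mle \pi' \mle \pi$ gives $\eta^{(n)} \mle \pi_1$; then Theorem~\ref{aleph}(b) declares $\eta^{(n)}$ basic.

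\textbf{Backward direction, first step: the fundament of $\pi'$.}
Conversely, assume (a) and (b). My plan is to show that the special smallest embedding cover $\pi$ dominates $\pi'$. I begin by verifying the two conditions of Theorem~\ref{compare}(i) for $\pi'_1 \mle \pi_1$. For (i1) with $\eta \in \Lambda_\na(G)$, Corollary~\ref{dominates multiple of one}(a) identifies $\mult_\eta(\pi'_1)$ with the supremum of those $n$ for which $\eta^{(n)} \mle \pi'$; by (b) each such $\eta^{(n)}$ is basic, and Theorem~\ref{complete characterization}(a) recognises the same supremum as $\mult_\eta(\pi_1)$. For $C \in \Lambda_\ab(G)$ the argument is parallel, using Corollary~\ref{dominates multiple of one}(b) applied to $\sigma_C^{(0)}$ together with Theorem~\ref{complete characterization}(b). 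For (i2), every nonzero $\eta \in \supp_C(\pi'_1)$ is non-split; Corollary~\ref{dominates multiple of one}(b) then gives $\eta \mle \pi'_1 \mle \pi'$, so (b) with $n=1$ makes $\eta$ basic, and Theorem~\ref{complete characterization}(c) places $\eta$ in $\supp_C(\pi_1)$. Hence $\pi'_1 \mle \pi_1$, and since $\pi_1$ is an I-cover (by Remark~\ref{EV trivialities}(b) applied to the factorisation $\pi = \pi_1 \circ \rho_1$), a second application of the same remark makes $\pi'_1$ an I-cover.

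\textbf{Backward direction, second step: the full series.}
To reach $\pi \mge \pi'$ rather than merely $\pi \mge \pi'_1$, I intend to build $\theta \colon H \onto H'$ inductively along the fundament series $(\pi'_k)_{k \ge 1}$ of $\pi'$, setting $\theta_0 = \pi$ and lifting $\theta_{k-1}$ through $\pi'_k$ to some $\theta_k \colon H \onto G'_k$. The lift at step $k$ exists precisely when $\pi'_k$ is an I-cover of $G'_{k-1}$, because $H$ carries the embedding property and $\theta_{k-1}$ is therefore an embedding cover of $G'_{k-1}$. The main obstacle, which I expect to be the crux of the argument, is to promote hypotheses (a) and (b) (stated over $G$) to their analogues over each $G'_{k-1}$. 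My plan is to argue by induction: once $\pi'_1, \ldots, \pi'_{k-1}$ have been shown to be I-covers, Remark~\ref{EV trivialities}(e) gives $\calE_f(G'_{k-1}) = \calE_f(G)$, so basicness at the two levels is measured against the same family of finite quotients. Given indecomposable $\eta$ over $G'_{k-1}$ with $\eta^{(n)} \mle \pi'_k$, I would fit $\eta^{(n)}$ into a semi-cartesian square over a suitable fundamental finite-kernel cover of $G$ dominated by $\pi'$, apply hypothesis (b) at the bottom to obtain basicness over $G$, and invoke Proposition~\ref{going down} to push basicness back down to $G'_{k-1}$. Once every $\pi'_k$ is established as an I-cover in this way, the inductive lift at limit ordinals is handled by an inverse limit exactly as in the proof of Proposition~\ref{I is basic}, completing $\pi \mge \pi'$ and hence showing that $\pi'$ is an I-cover.
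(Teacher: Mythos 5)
Your forward direction is correct and essentially the same as the paper's. The first half of your backward direction is also correct; your verification of (i1) takes a slightly different route than the paper (you invoke Theorem~\ref{complete characterization} to identify $\mult_\lambda(\pi_1)$ with the supremum of $n$ for which $\eta^{(n)}$ is basic, whereas the paper argues directly from Corollary~\ref{dominates multiple of one} and a short contradiction with $n+1$), but both are valid and the paper's version is no longer.

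The real issue is your "second step": it is unnecessary, and as written it does not constitute a proof. Notice that $\mult_\lambda(\pi')$ in condition (a), and $\supp_C(\pi')$ implicitly used via Theorem~\ref{compare}(i), are only defined for fundamental covers (Theorem~\ref{fundamental mult supp}); the lemma is therefore about a fundamental $\pi'$, so $\pi'_1 = \pi'$. With that, your first step already establishes $\pi' \mle \pi_1$, and since $\pi_1$ is an I-cover, Remark~\ref{EV trivialities}(b) applied to the factorisation $\pi_1 = \pi' \circ \psi$ immediately shows that $\pi'$ is an I-cover — exactly as the paper does. Nothing remains to be done. Your planned induction along the fundament series of $\pi'$ chases the much stronger conclusion $\pi' \mle \pi$, which is not needed, and the crux of that induction (promoting hypotheses (a), (b) from $G$ to $G'_{k-1}$, producing the needed semi-cartesian square, and verifying the hypothesis of Proposition~\ref{going down}) is only sketched, not carried out. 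You should delete the second step and instead observe at the end of the first step that fundamentality of $\pi'$ makes $\pi'_1 = \pi'$, so you are already done.
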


\begin{proof}
Let $\pi$ be the smallest embedding cover of $G$
and let
$\pi_1$ be its fundament.

If $\pi'$ is an I-cover,
then
$\pi' \mle \pi$.
By Remark~\ref{hereditary},
$\pi' \mle \pi_1$.
Hence (a), (b) follow from Theorem~\ref{aleph}.

Conversely,
assume (a), (b).
By Proposition~\ref{special sec}
(and by the uniqueness of $\pi$),
$\pi_1$ is an I-cover,
hence it suffices to show that
$\pi' \mle \pi_1$,
that is,
by Theorem~\ref{compare}(i),
that
\begin{itemize}
\item[(i1)]
$\mult_\lambda(\pi') \le \mult_\lambda(\pi_1)$,
for every $\lambda \in \Lambda(G)$;
and
\item[(i2)]
$\supp_A(\pi') \subseteq \supp_A(\pi_1)$,
for every $A \in \Lambda_\ab(G)$.
\end{itemize}

We prove (i2):
Let $A \in \Lambda_\ab(G)$
and let
$0 \ne \eta \in \supp_A(\pi')$.
By Corollary~\ref{dominates multiple of one},
$\eta \mle \pi'$,
hence, by (b),
$\eta$ is basic.
In particular,
it is an I-cover,
so $\eta \mle \pi$,
whence
$\eta \mle \pi_1$
by Remark~\ref{hereditary}.
By Corollary~\ref{dominates multiple of one},
$\eta \in \supp_A(\pi_1)$.

We prove (i1):
Let $\lambda \in \Lambda(G)$
and denote 
$n = \mult_\lambda(\pi_1)$.
By (a) we may assume that
$n < \aleph_0$.
If $\lambda = A \in \Lambda_\ab(G)$,
let $\eta$ be the split cover of $G$ with kernel $A$;
if $\lambda \in \Lambda_\na(G)$,
put $\eta = \lambda$.
By Corollary~\ref{dominates multiple of one},
$\eta^{(n+1)} \not\mle \pi_1$,
hence, by Remark~\ref{hereditary},
$\eta^{(n+1)} \not\mle \pi$,
whence $\eta^{(n+1)}$ is not basic.
By (b),
$\eta^{(n+1)}\not\mle \pi'$,
so, 
by Corollary~\ref{dominates multiple of one},
$n+1 \not\le \mult_\lambda(\pi')$,
that is,
$\mult_\lambda(\pi') \le n$.
\end{proof}

We give a general construction of examples of I-covers.

\begin{Lemma}\label{general}
Let
$(\alpha_i \colon B_i \onto A)_{i \in I}$
be a family
of indecomposable epimorphisms of finite groups,
with $|I| \le \aleph_0$.
Assume that
\begin{itemize}
\item[(a)]
no $B_i$ is the direct product of
$\Ker \alpha_i$ with some subgroup of $B_i$;

\item[(b)]
for every simple \textsl{trivial} $A$-module $K$,
the family
$(\alpha_i \st \Ker \alpha_i \isom_A K)$
is linearly independent over $\End_A(K)$.
%\item[(c)]
\end{itemize}
Let 
$G = \fprod[A]{i \in I} B_i \times A$,
and let $\phi \colon G \onto A$
be the projection on the second coordinate.
For every $i \in I$
let $\eta_i \colon H_i \onto G$
be the map defined by the following cartesian square
\begin{equation*}
\xymatrix{
H_i \arr[r]^{\eta_i} \arr[d]^{\beta_i} & G \arr[d]^{\phi}
\\
B_i \arr[r]^{\alpha_i} & A
}
\end{equation*}
Then
$\eta_I \colon \fprod{i \in I} H_i \onto G$
is an I-cover.
\end{Lemma}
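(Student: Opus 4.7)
The plan is to verify the two conditions of Lemma~\ref{I-cover}: every multiplicity $\mult_\lambda(\eta_I)$ is at most $\aleph_0$, and whenever $\eta^{(n)} \mle \eta_I$ for an indecomposable cover $\eta$ of $G$, the fiber product $\eta^{(n)}$ is basic. Since each $\eta_i$ is indecomposable (its defining square is cartesian with $\alpha_i$ indecomposable, by \cite[Corollary~2.2]{fund}), $\eta_I$ is fundamental by Lemma~\ref{fiber product is fundamental}, and all its multiplicities are bounded by $|I| \le \aleph_0$; this settles the first condition.

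I would first establish that each $\eta_i$ is basic. Writing $G = N \times A$ with $N = \fprod[A]{i \in I} B_i$, the group $B_i$ is a finite quotient of $G$ (via projection onto $N$ followed by $\pr_{I,i}$), hence $B_i \in \calE_f(G)$. Compactness of the defining cartesian square is equivalent to $\phi \not\mle \alpha_i$ by Proposition~\ref{indecomposable cartesian square}; using the direct-product decomposition, a surjection $\psi \colon G \onto B_i$ with $\alpha_i \circ \psi = \phi$ would restrict on the second factor to a section $s \colon A \to B_i$ of $\alpha_i$ and on the first to a surjection $f \colon N \onto \Ker \alpha_i$ with $f(N)$ commuting with $s(A)$, forcing $B_i = \Ker \alpha_i \times s(A)$ and contradicting hypothesis~(a). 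A second consequence of $G = N \times A$ is that the section $a \mapsto (1, a)$ of $\phi$ makes $\Inf_\phi \colon H^2(A, K) \to H^2(G, K)$ a split injection for every pulled-back $A$-module $K$; hypothesis~(b), applied to a simple trivial $A$-module, therefore translates into linear independence of $(\eta_i)_{i \in I_C}$ over $F_C$ for the corresponding simple $G$-module $C$.

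For the second condition, let $\eta$ be indecomposable with $\eta^{(n)} \mle \eta_I$. By Proposition~\ref{indecomposable quotients of a fiber product pi is id}, either $\eta \isom_G \eta_i$ for some $i$ (non-abelian kernel), or $\eta$ has abelian kernel $C$ and lies in $\Span_{F_C}(\eta_i : i \in I_C)$; in the latter case, if $\eta \ne 0$, then $\eta$ is basic by Proposition~\ref{basic subspace}(a). The split $\sigma_C^{(0)}$ also needs attention: when $C$ comes from a trivial $A$-module the linear independence just derived forces $\mult_C(\eta_I) = 0$, so $\sigma_C^{(0)}$ is not dominated by $\eta_I$ (Corollary~\ref{dominates multiple of one}); when $C$ comes from a non-trivial $A$-module, I realize $\sigma_C^{(0)}$ as the pullback of the indecomposable split extension $A \ltimes C \to A$ along $\phi$, noting that $A \ltimes C \in \calE_f(G)$ by the embedding property of $E$ and that the non-trivial $A$-action on $C$ prevents $A \ltimes C$ from decomposing as a direct product of $C$ with a subgroup, so $\phi \not\mle A \ltimes C \to A$ and the pullback square is compact.

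To promote basic-ness from single covers to powers $\eta^{(n)}$ with $n \ge 2$, I apply Proposition~\ref{fprod of basics is basic}: the indecomposable covers dominated by $\eta^{(n)}$ are $\eta$ and possibly $\sigma_C^{(0)}$ (in the abelian case), both basic by the previous paragraph, so it remains to show that the source of $\eta^{(n)}$ lies in $\calE(G)$. This is obtained inductively, by viewing the $(k+1)$-fold fiber product $H_\eta \times_G \cdots \times_G H_\eta$ as an extension of the $k$-fold one with finite kernel $\Ker \eta$ and lifting surjections from $E$ onto the $k$-fold product to the $(k+1)$-fold product via the embedding property at each finite quotient, taking a compatible inverse limit. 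The principal obstacle is the cohomological manoeuvre that converts hypothesis~(b) into linear independence of the $\eta_i$'s via the injectivity of $\Inf_\phi$, paired with the pullback-based treatment of non-trivial $A$-modules where hypothesis~(b) supplies no direct information.
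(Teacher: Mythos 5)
Your overall plan (verify the two conditions of Lemma~\ref{I-cover}) matches the paper's Claim~D, and your argument that each $\eta_i$ is basic is correct (the direct-product splitting $G = N \times A$ does force $B_i = \Ker\alpha_i \times s(A)$ from any lift, contradicting (a); and $\Inf_\phi$ is split injective because $\phi$ splits). But your route from there diverges from the paper's and contains two real gaps.

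The more serious gap is the step where you claim the source of $\eta^{(n)}$ lies in $\calE(G)$ ``by lifting surjections from $E$ onto the $k$-fold product to the $(k+1)$-fold product via the embedding property.'' The embedding property of $E$ only solves embedding problems $(\phi,\alpha\colon B\onto A)$ when \emph{both} $A$ and $B$ already lie in $\calE_f(E) = \calE_f(G)$. A finite quotient $B$ of the $(k{+}1)$-fold fiber product is precisely what you are trying to place in $\calE_f(G)$, so you cannot feed it into the embedding property; the induction is circular. This is not an incidental omission: since $n$ copies of the same $\eta$ are never linearly independent, Proposition~\ref{basic subspace}(b) does not apply to $\eta^{(n)}$ for $n\ge 2$, and Proposition~\ref{fprod of basics is basic} requires the membership in $\calE(G)$ that you are trying to prove. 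The paper avoids this entirely by first proving (Claims~A and~B) that the large cartesian square formed by $\eta_I$, $\phi$, $\alpha_I$ and $\prod\beta_i$ is compact; since $\fprod[A]{i\in J} B_i$ is a direct factor of $G$ and hence a finite quotient of $G$, the compactness immediately places $\fprod{i\in J} H_i$ in $\calE(G)$ and shows $\eta_J$ is basic for every finite $J$, after which $\eta^{(n)}\mle\eta_J$ gives basicness by Lemma~\ref{compose basic}. There is no way around establishing some version of Claim~B.

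A second gap: you justify $A\ltimes K\in\calE_f(G)$ ``by the embedding property of $E$,'' which again is not a valid use of that property (it does not manufacture new finite quotients of $E$). The statement is nonetheless true in the only case where you need it: if $\sigma_C^{(0)}\mle\eta_I$, then the $(\alpha_i)_{i\in I_C}$ are linearly dependent in $H^2(A,K)$, so by Proposition~\ref{indecomposable quotients of a fiber product pi is id}(b) the split extension $A\ltimes K\onto A$ is dominated by a finite fiber product $\fprod[A]{i\in J_0} B_i$, which is a quotient of $N=\fprod[A]{i\in I}B_i$ and hence a finite quotient of $G$. You should cite this, not the embedding property. (Also, your inequality is written backwards in two places: compactness with $\alpha$ indecomposable is $\alpha\not\mle\phi$, not $\phi\not\mle\alpha$; the subsequent sentences make clear you mean the right thing, but the notation is reversed.)

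Finally, note the structural contrast: the paper derives everything from a single compactness claim (Claim~B), whose proof packages hypotheses (a) and (b) together through a clean structure lemma (Claim~A) about quotients $D$ of $G$ interpolating $\phi$, which forces $D$ to be a direct product and hence the indecomposable $\pi\colon D\onto A$ to split. You instead verify basicness cover-by-cover using Propositions~\ref{basic subspace} and~\ref{fprod of basics is basic}; this distributes the work and, as noted, runs into circularity precisely where the compactness of the big square is needed.
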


\begin{proof}
Let 
$B = \fprod[A]{i \in I} B_i$
and
$H = \fprod{i \in I}H_i$.

\subdemoinfo{Claim A}
{Let
$\phi' \colon G \onto D$
and
$\pi \colon D \onto A$
be %epimorphisms
such that
$\pi \circ \phi' = \phi$.
Put
$K = \phi'(B)$
and
$L := \phi'(A)$.
Then
$D = K \times L$
and
$\Ker \pi = K$.
}

Indeed,
$\Ker \phi' \le \Ker \phi = B$,
so
$(\phi')^{-1}(K) = (\phi')^{-1}(\phi'(B)) = B \Ker \phi' =  B$.
Hence, as 
$B \cap A = 1$,
we have
$K \cap L = 1$.
%Indeed,
%let $d \in K \cap L$.
%Then $d \in L$, so there is $c \in A$ such that $\phi'(c) = d$.
%But $d \in K$,
%so $c \in (\phi')^{-1}(K) =  B$.
%Hence $c \in B \cap A = 1$,
%whence $d = \phi'(c) = 1$.
As $B,A \normal G$ and $G = BA$,
we have
$K,L \normal D$
and
$D= KL$.
Thus
$D = K \times L$.
Finally,
$\Ker \pi = \phi'(\Ker \phi) = \phi'(B) = K$.

\subdemoinfo{Claim B}
{
The cartesian square in the diagram below 
is compact.
\begin{equation}\label{ccc}
\xymatrix@=5pt{
\llap{$H :=$ }
\fprod{i \in I}H_i \arr[rrr]^{\eta_{I}} \arr[ddd]_{\prod \beta_i}&&&
G \arr[ddd]^{\phi} \dotarr[ddl]_{\phi'}
\\
\\
&& D\arr@{.>}[dr]^{\pi}
\\
\llap{$B :=$ }
\fprod[A]{i \in I}B_i \arr[rrr]_{\alpha_{I}} \dotarr[rru]^{\alpha'}
&&& A
}
\end{equation}
}

Indeed,
if it is not compact,
there is an indecomposable epimorphism
$\pi \colon D \onto A$
and epimorphisms
$\alpha' \colon B \onto D$,
$\phi' \colon G \onto D$
such that
$\pi \circ \alpha' = \alpha_{I}$
and
$\pi \circ \phi' = \phi$.
Let $K = \Ker \pi$.
By Claim A,
$D$ is the direct product of
$K$ and a subgroup of $D$.
In particular,
$\pi$ splits.

On the other hand,
as $\pi$ is indecomposable and $\pi \mle \alpha_{I}$,
by Proposition~\ref{indecomposable quotients of a fiber product pi is id}
$\pi$ is isomorphic either to $\alpha_i$,
for some $i \in I$,
which contradicts (a),
or,
if $K$ is abelian,
to a non-trivial linear combination of
$(\alpha_i \st i \in I,\ \Ker \alpha_i \isom_A K)$.
In the latter case
$K$ is a trivial $A$-module
and $\pi$ corresponds to $0$ in $H^2(A,K)$,
by the preceding paragraph,
which contradicts (b).

\subdemoinfo{Claim C}
{
If $I$ is finite, then $\eta_I$ is an I-cover.
}

Indeed,
let $\eps \colon E \onto G$ be an embedding cover.
Then (referring to diagram~\eqref{ccc})
$B$ is a finite quotient of $G$ and hence of $E$,
so there is an epimorphism
$\gamma' \colon E \onto B$ 
such that
$\alpha_{I} \circ \gamma' = \phi \circ \eps$.
As the square in \eqref{ccc} is compact,
the unique homomorphism
$\gamma \colon E \to H$ 
such that
$\prod \beta_i \circ \gamma = \gamma'$
and
$\eta_{I} \circ \gamma = \eps$
is surjective.

\subdemoinfo{Claim D}
{
$\eta_I$ is an I-cover.
}

It suffices to verify conditions (a), (b) of 
Lemma~\ref{I-cover}
for $\pi' = \eta_I$.
The first one holds because $|I| \le \aleph_0$.
Let $\eta$ be an indecomposable cover of $G$
and assume that
$\eta^{(n)} \mle \eta_I$.
%So there is 
%an epimorphism $\tau$
%such that
%$\eta_I = \eta^{(n)} \circ \tau$.
As $\Ker \eta^{(n)}$ is finite
and $H = \varprojlim_{J} \fprod{i \in J} H_i$,
where $J$ runs through all finite subsets of $I$,
there is $J$
such that
%$\tau$ factors through
%$\pr_{I,J} \colon H_i \onto H_J$,
%and hence
$\eta^{(n)} \mle \eta_J \colon \fprod{i \in J} H_i \onto G$.
Therefore condition (b) of
Lemma~\ref{I-cover}
holds by Claim C.
\end{proof}

\begin{Example}\label{examples of I-covers}
One could wonder whether in condition (a)
of Theorem~\ref{aleph}
one could write
$\mult_\lambda(\pi_k) < \aleph_0$
instead of
$\mult_\lambda(\pi_k) \le \aleph_0$.
This is not the case:
We use
Lemma~\ref{general}
to produce examples of I-covers $\pi$
with
$\mult_\lambda(\pi) = \aleph_0$ for some $\lambda$.

Let $C_n$ denote the cyclic group of order $n$.

(a)
Let $I = \N$
and for every $i \in I$ let
$B_i = S_3$
and let
$\alpha_i \colon B_i \onto A$
be the epimorphism $S_3 \onto C_2$.
Then $\alpha_i$ is split,
with a non-trivial kernel.
It follows that $\eta_i \colon H_i \onto G$
in Lemma~\ref{general}
is split 
with a non-trivial kernel $K = C_3$,
and it is the same map for every $i$.
Conditions (a), (b) of Lemma~\ref{general}
are satisfied,
so
$\eta_I \colon \fprod{i \in I} H_i \onto G$
is an I-cover with
$\mult_K(\eta_I) = \aleph_0$.

(b)
Similarly,
with $S_n$ instead of $S_3$, for $n \ge 5$,
we have
$\mult_\eta(\eta_I) = \aleph_0$
for the indecomposable epimorphisms
$\alpha_i \colon S_n \onto C_2$.
\end{Example}

\begin{Example}\label{composition}
There is an I-cover
that is a composition $\zeta \circ \xi$
of two epimorphisms
and $\xi$ is not an I-cover.

Indeed,
by Example~\ref{examples of I-covers}
there is
a profinite group $G$
and
an infinite family of isomorphic indecomposable epimorphisms
$(\eta_i \colon H_i \onto G)_{i \in I}$
such that
$\eta_I \colon \fprod{i \in I} H_i \onto G$
is an I-cover.
Let $k \in I$
and put
$J = I \smallsetminus \{k\}$.
By Lemma~\ref{quotient of fprod not I}
we have
$\eta_I = \eta_J \circ \pr_{I,J}$
such that
$\pr_{I,J}$ is  indecomposable but not basic.
By Proposition~\ref{I is basic},
$\pr_{I,J}$ is not an I-cover.
\end{Example}

Because of condition (a) of Lemma~\ref{general}
the question arises
whether
an indecomposable
$\eta \colon H \onto G$
can be an I-cover,
if 
$H$ is the free product of $\Ker \eta$ with some subgroup of $H$.
The negative answer is provided in 
Proposition~\ref{not I-cover}
below, after some preparations.

\begin{Lemma}\label{EP for direct product}
Let $G$ be a profinite group,
$\phi \colon G \onto A$ an epimorphism onto a finite group $A$,
and $S$ a finite simple group.
Let $\alpha \colon A \times S \onto A$
be the coordinate projection.
Then the following are equivalent:
\begin{itemize}
\item[(a)]
Embedding problem $(\phi, \alpha)$ has a surjective solution.
\item[(b)]
There is
$\gamma \colon G \onto A \times S$.
\item[(c)]
There is
$\psi \colon G \onto S$
that does not factor through $\phi$.
\end{itemize}
\end{Lemma}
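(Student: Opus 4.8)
The plan is to prove the cycle of implications (a)$\Rightarrow$(b)$\Rightarrow$(c)$\Rightarrow$(a), since each step is relatively short once the right construction is set up. Throughout, write $\alpha\colon A\times S\onto A$ for the first projection, so $\Ker\alpha\isom S$.

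For (a)$\Rightarrow$(b): a surjective solution of $(\phi,\alpha)$ is precisely an epimorphism $\gamma\colon G\onto A\times S$ with $\alpha\circ\gamma=\phi$, so (b) is immediate (in fact (a) and (b) are essentially tautologically equivalent, the only content being that a solution of the embedding problem is by definition surjective here, which I would note explicitly).

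For (b)$\Rightarrow$(c): given $\gamma\colon G\onto A\times S$, let $\psi=\pr_2\circ\gamma\colon G\onto S$ be the composite with the second projection; surjectivity of $\gamma$ forces $\psi$ to be surjective. I must check $\psi$ does not factor through $\phi$. If it did, say $\psi=\psi'\circ\phi$ with $\psi'\colon A\onto S$, then $g\mapsto(\phi(g),\psi'(\phi(g)))$ and $g\mapsto\gamma(g)=(\phi(g),\psi(g))$ would agree, so $\gamma$ would have image contained in the graph of $\psi'$, which is a proper subgroup of $A\times S$ (as $S\ne1$), contradicting surjectivity of $\gamma$. Hence $\psi$ does not factor through $\phi$.

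For (c)$\Rightarrow$(a): given an epimorphism $\psi\colon G\onto S$ not factoring through $\phi$, define $\gamma\colon G\to A\times S$ by $\gamma(g)=(\phi(g),\psi(g))$; then $\alpha\circ\gamma=\phi$, so it only remains to show $\gamma$ is surjective. Let $N=\overline{\gamma(G)}\le A\times S$. Its image under $\pr_1$ is $\phi(G)=A$ and its image under $\pr_2$ is $\psi(G)=S$, so $N$ is a subgroup of $A\times S$ surjecting onto both factors. The kernel $N\cap(\{1\}\times S)$ is a normal subgroup of $A\times S$ contained in $\{1\}\times S$, hence corresponds to a normal subgroup of $S$; since $S$ is simple it is either all of $S$ or trivial. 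If it is all of $S$, then $N\supseteq\{1\}\times S$, and combined with $\pr_1(N)=A$ this gives $N=A\times S$, i.e.\ $\gamma$ is surjective, as desired. The remaining case — $N\cap(\{1\}\times S)=1$ — is the one to rule out: then by Goursat's lemma $N$ is the graph of an isomorphism between $A/N_0$ and $S$ for some $N_0\normal A$; composing $\phi$ with the quotient $A\onto A/N_0$ and then this isomorphism would express $\psi$ as factoring through $\phi$, contradicting (c). Hence this case cannot occur and $\gamma$ is surjective. I expect the Goursat-style bookkeeping in this last case to be the only mildly delicate point, but it is entirely elementary finite group theory; everything else is formal.
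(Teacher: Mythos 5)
Your step (c)$\Rightarrow$(a) is fine: the Goursat-style analysis of $N=\gamma(G)$ is an elementary rephrasing of the paper's argument (the paper instead observes that $\psi(\Ker\phi)$ is a nontrivial normal subgroup of $S$, hence all of $S$, and invokes Lemma~\ref{expand}). The problem is in (b)$\Rightarrow$(c), and it stems from a misreading of condition (b). Condition (b) asserts only that \emph{some} epimorphism $\gamma\colon G\onto A\times S$ exists; it does \emph{not} require $\alpha\circ\gamma=\phi$. (If it did, (b) would indeed be tautologically equivalent to (a), as you remark -- but then the lemma would have no content, and its application in Lemma~\ref{not superbasic} would break: there one only knows that $B=A\times S$ is \emph{some} quotient of $G$, and the whole point is to upgrade this to a $\phi$-compatible epimorphism.) Consequently your identity $\gamma(g)=(\phi(g),\psi(g))$ is unjustified, and the specific map $\psi=\pr_2\circ\gamma$ may very well factor through $\phi$: take $G=A\times S=\Z/2\times\Z/2$, $\phi=\pr_1$, and $\gamma$ the coordinate swap; then $\pr_2\circ\gamma=\phi$ up to the identification $A\isom S$, so it factors through $\phi$, even though (c) still holds (via $\pr_2$ itself).

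So for (b)$\Rightarrow$(c) you cannot exhibit a single canonical $\psi$; you must show that \emph{some} epimorphism $G\onto S$ fails to factor through $\phi$. The paper does this by counting: epimorphisms $G\onto S$ factoring through $\phi$ are in bijection with epimorphisms $A\onto S$, those factoring through $\gamma$ are in bijection with epimorphisms $A\times S\onto S$, and composing with $\alpha$ embeds the former set into the latter while missing the projection $\beta\colon A\times S\onto S$; hence strictly more epimorphisms $G\onto S$ factor through $\gamma$ than through $\phi$, and one of them is the required $\psi$. You need some argument of this kind to close the gap.
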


\begin{proof}
(a) $\implies$ (b)
is trivial.

(b) $\implies$ (c):
Let
$\gamma \colon G \onto A \times S$
be an epimorphism and denote
\begin{align*}
P &= \{ \psi \colon G \onto S\},
\\
N &= \{ \psi \colon G \onto S \st
\psi \textnormal{ factors through } \phi\},
\\
M &= \{ \psi \colon G \onto S \st
\psi \textnormal{ factors through } \gamma\},
\\
N' &= \{ \psi' \colon A \onto S\},
\\
M' &= \{ \psi' \colon A \times S \onto S\}.
\end{align*}
Then 
$M \subseteq P$ 
and $\psi' \mapsto \psi' \circ \alpha$
is an injective map
$N' \to M' \smallsetminus \{\beta\}$,
where
$\beta \colon A \times S \onto S$
is the coordinate projection.
Also,
there are bijections
$N \leftrightarrow N'$,
$M \leftrightarrow M'$.
So
$N,M,N',M'$ are finite,
and
$|N| = |N'| < |M'| = |M| \le |P|$.
Hence there is
$\psi \in P \smallsetminus N$.

(c) $\implies$ (a):
Define 
$\gamma \colon G \to A \times S$
by
$g \mapsto (\phi(g), \psi(g))$.
Then
$\alpha \circ \gamma = \phi$
and
$\beta \circ \gamma = \psi$.
We claim that $\gamma$ is surjective.
$$
\xymatrix{
G 
\arr@/^1pc/[rrd]^{\phi}
\ar[rd]^{\gamma}
\arr@/_1pc/[rdd]_{\psi}
\\
& A \times S \arr[d]^{\beta} \arr[r]_{\alpha} & A \arr[d] \dotarr[dl]^{\rho}
\\
& S \arr[r] & 1
}
$$
Indeed,
$\Ker \phi \normal G$,
so
$\psi(\Ker \phi) \normal S$.
We cannot have
$\psi(\Ker \phi) = 1$,
because
$\psi$ does not factor through $\phi$.
Thus, as $S$ is simple,
$\psi(\Ker \phi) = S$.
Therefore the square
$(G,A,S,1)$
is semi-cartesian
and 
$(A \times S,A,S,1)$
is cartesian,
hence
$\gamma$ is surjective
by Lemma~\ref{expand}.
\end{proof}

\begin{Lemma}\label{direct compact}
Consider a compact cartesian square 
\begin{equation}\label{cart prod}
\xymatrix{
G \times S \ar[r]_{\eta} \ar[d]_{\beta} & G \ar[d]^{\phi}
\\
B \ar[r]^{\alpha} & A \\
}
\end{equation}
in which
$S$ is simple
and $\eta$ is the coordinate projection.
Then, up to isomorphism,
$B = A \times S$,
$\alpha$ is the coordinate projection,
and $\beta = \phi \times \id_S$.
\end{Lemma}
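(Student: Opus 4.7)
The plan is to analyze $\Ker \beta$ via the cartesian conditions and then rule out the one ``twisted'' possibility by a direct compactness argument. The cartesian conditions $\Ker \beta \cap \Ker \eta = 1$ and $(\Ker \beta)(\Ker \eta) = \Ker(\phi \circ \eta) = \Ker \phi \times S$ (using $\Ker \eta = 1 \times S$) imply that $\eta$ restricts to a continuous bijection, hence an isomorphism, $\Ker \beta \to \Ker \phi$. So there is a unique continuous homomorphism $f \colon \Ker \phi \to S$ with $\Ker \beta = \{(g, f(g)) \mid g \in \Ker \phi\}$. Normality of $\Ker \beta$ in $G \times S$, tested by conjugation with elements $(g', 1)$ and $(1, s')$, then forces $f$ to be $G$-invariant and to take values in $Z(S)$.

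Since $S$ is simple, $Z(S)$ is either $1$ (the non-abelian case) or $S$ itself (the case $S = C_p$). In the non-abelian case $f = 0$ automatically. Suppose now that $S$ is abelian and $f \ne 0$, so $\Img f = S$ by simplicity. Set $p(g) := \beta(g, 1)$. Since $(g, 1)$ and $(1, f(g))$ commute in $G \times S$, the identity $1 = \beta(g, f(g)) = \beta(g, 1)\, \beta(1, f(g))$ for $g \in \Ker \phi$ yields $p(g) = \beta(1, f(g))^{-1}$, hence $p(\Ker \phi) = \beta(1 \times \Img f) = \beta(1 \times S)$. As every element of $B$ has the form $p(g) \cdot \beta(1, s)$, this gives $B = p(G) \cdot \beta(1 \times S) = p(G)$. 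But then the proper closed subgroup $G \times 1$ of $G \times S$ is mapped by $\eta$ onto $G$ and by $\beta$ onto $p(G) = B$, contradicting the compactness of \eqref{cart prod}.

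Therefore $f = 0$, i.e., $\Ker \beta = \Ker \phi \times 1$. Consequently $\beta$ factors through $\phi \times \id_S \colon G \times S \onto A \times S$ as a surjection $\bar\beta \colon A \times S \to B$; a direct check gives $\Ker \bar\beta = 1$, so $\bar\beta$ is an isomorphism. Identifying $B$ with $A \times S$ via $\bar\beta$ produces $\beta = \phi \times \id_S$, and the relation $\alpha \circ \beta = \phi \circ \eta$ then forces $\alpha$ to be the projection onto $A$. The main obstacle is the abelian case $S = C_p$, where the argument genuinely uses compactness: the key point is that a nonzero $G$-invariant ``twist'' $f$ already makes $p$ surjective, after which $G \times 1$ alone witnesses non-compactness.
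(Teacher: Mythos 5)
Your proof is correct but takes a genuinely different route from the paper's. The paper works on the image side: it sets $A' = \beta(G \times 1)$ and $S' = \beta(1 \times S)$, observes that $B = A'S'$ with both factors normal and $S' = \Ker\alpha$ simple, and invokes Proposition~\ref{indecomposable cartesian square} to deduce from compactness that $A' \ne B$; since $S'$ is simple and normal this forces $A' \cap S' = 1$, hence $B = A' \times S'$, and the identifications follow. You instead work on the kernel side, parametrizing $\Ker\beta$ as the graph of a $G$-invariant continuous homomorphism $f \colon \Ker\phi \to Z(S)$, and in the one nontrivial case ($S$ abelian, $f \ne 0$) you exhibit $G \times 1$ directly as a proper closed subgroup with full projections, contradicting the definition of compactness. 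A nice by-product of your route is that it makes visible that the compactness hypothesis is not needed when $S$ is non-abelian: there $Z(S) = 1$ already forces $f = 0$ from the cartesian and normality conditions alone. The paper's argument is shorter and uniform (no case split on $Z(S)$) and reuses its pre-built compactness criterion; yours is more hands-on and elementary, relying only on the raw definition of compactness and giving a slightly sharper conclusion in the non-abelian case.
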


\begin{proof}
Put
$A' = \beta(G)$,
$S' = \beta(S)$,
and let
$\beta_G \colon G \to A'$,
$\beta_S \colon S \to S'$
be the restrictions of $\beta$ to $S,G$, respectively.
Then $A', S' \normal B$ and $B = A' S'$.
As \eqref{cart prod} is cartesian,
$\beta$ maps $S = \Ker \eta$
isomorphically onto $\Ker \alpha$,
so $S' = \Ker \alpha$ is simple
and $\beta_S$ is an isomorphism.
Further,
$\alpha \circ \beta_G = \phi$,
so $A' \ne B$, by %the last assertion of
Proposition~\ref{indecomposable cartesian square}.
From this and $B = A' S'$ we get that
$S' \not\le A'$, hence, as $S'$ is simple,
$A' \cap S' = 1$.
Therefore
$B = A' \times S'$.
But $S' = \Ker \alpha$,
so $\alpha$ maps $A'$ isomorphically onto $A$.
Replacing groups by isomorphic copies
we may assume that
$A' = A$
and
$S' = S$.
Then
$\beta_G = \phi$,
$\beta_S = \id_{S}$,
$\alpha|_{A} = \id_{A}$,
and
$S = \Ker \alpha$.
\end{proof}

\begin{Remark}
The requirement that \eqref{cart prod} be compact is necessary.

Indeed,
let
$G =\langle g \rangle$
be the free pro-2 group on one generator,
$B =\langle b \rangle$
the cyclic group of order 4,
and
$ S = \langle s \rangle$ and $A =\langle a \rangle$
the cyclic groups of order 2.
Define $\phi \colon G \onto A$ by $g \mapsto a$,\ 
$\alpha \colon B \onto A$ by $b \mapsto a$,
and $\beta \colon G \times S \onto B$
by $g \mapsto b$ and $s \mapsto b^2$.
Then \eqref{cart prod} commutes
and $\beta$ maps $\Ker \eta = \langle s \rangle$
isomorphically onto
$\Ker \alpha = \langle b^2 \rangle$,
hence \eqref{cart prod} is cartesian.
But $\beta(G) \cap \beta(S) \ne 1$.
\end{Remark}

\begin{Lemma}\label{not superbasic}
Let $G$ be a profinite group
and $S$ a finite simple group.
Then the coordinate projection
$\eta \colon G \times S \onto G$
is not a superbasic cover of $G$.
\end{Lemma}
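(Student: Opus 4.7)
The plan is to derive a contradiction from the assumption that $\eta$ is superbasic, by comparing Lemma~\ref{EP for direct product} with Proposition~\ref{indecomposable cartesian square} applied to the structure forced by Lemma~\ref{direct compact}.

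First I would note that $\eta \colon G \times S \onto G$ is indecomposable: its kernel $S$ is a finite simple group, and any decomposition $\eta = \eta_2 \circ \eta_1$ gives $\Ker \eta_1 \normal G \times S$ contained in $S$, hence $\Ker \eta_1 \in \{1, S\}$, so $\eta_1$ or $\eta_2$ is an isomorphism. Suppose for contradiction that $\eta$ is superbasic. Then there exists a compact cartesian square
\begin{equation*}
\xymatrix{
G \times S \ar[r]_{\eta} \ar[d]_{\beta} & G \ar[d]^{\phi} \\
B \ar[r]^{\alpha} & A \\
}
\end{equation*}
with $A$ finite and $B$ a finite quotient of $G$.

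By Lemma~\ref{direct compact}, after replacing $B$ by an isomorphic copy we may assume $B = A \times S$, $\alpha$ is the coordinate projection, and $\beta = \phi \times \id_S$. In particular, $A \times S$ is a finite quotient of $G$. By the implication (b) $\implies$ (a) of Lemma~\ref{EP for direct product}, the embedding problem $(\phi, \alpha)$ has a (surjective) solution $\gamma \colon G \onto A \times S$ with $\alpha \circ \gamma = \phi$. That is, $\phi$ dominates $\alpha$.

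On the other hand, by \cite[Corollary 2.2]{fund} (used in the excerpt right after the definition of a cartesian square), $\alpha$ is indecomposable since $\eta$ is. Compactness of the cartesian square then gives, via Proposition~\ref{indecomposable cartesian square}, that $\phi$ does \emph{not} dominate $\alpha$. This contradicts the previous paragraph, completing the proof. The entire argument is essentially bookkeeping: the only substantive ingredients are already packaged in Lemma~\ref{direct compact} (which rigidifies the compact cartesian square over a direct-product projection) and Lemma~\ref{EP for direct product} (which translates quotients of the form $A \times S$ into solvability of the embedding problem), so there is no real obstacle once those are available.
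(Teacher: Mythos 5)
Your proof is correct and follows essentially the same route as the paper's: assume superbasic, use Lemma~\ref{direct compact} to normalize the compact cartesian square to the form $B = A \times S$ with $\alpha$ the coordinate projection, invoke Lemma~\ref{EP for direct product} (via $B$ being a finite quotient of $G$) to produce a solution to $(\phi,\alpha)$, and conclude via Proposition~\ref{indecomposable cartesian square} that the square cannot be compact. The only cosmetic difference is that you explicitly record the indecomposability of $\eta$ (hence of $\alpha$) at the outset, which the paper leaves implicit in the definition of superbasic.
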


\begin{proof}
Assume
that $\eta$ is a superbasic cover.
So there is a compact cartesian square \eqref{cart prod}
with $A,B$ finite,
and an epimorphism $\delta \colon G \onto B$.
By Lemma~\ref{direct compact}
we may assume that
$B = A \times S$,
$\alpha$ is the coordinate projection,
and $\beta = \phi \times \id_S$.
By Lemma~\ref{EP for direct product},
there is an epimorphism
$\gamma \colon G \onto S \times A$
such that
$\alpha \circ \gamma = \phi$.
By Proposition~\ref{indecomposable cartesian square},
\eqref{cart prod} is not compact,
a contradiction.
\end{proof}

\begin{Proposition}\label{not I-cover}
Let $G$ be a profinite group
and $S$ a finite simple group.
Then the coordinate projection
$\eta_0 \colon G \times S \onto G$
is not an I-cover of $G$.
\end{Proposition}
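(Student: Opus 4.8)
The plan is to argue by contradiction, using the explicit description of a smallest embedding cover as a transfinite tower of \emph{superbasic} covers (Construction~\ref{sec transfinite}) together with Lemma~\ref{not superbasic}. Assume $\eta_0\colon G\times S\onto G$ is an I-cover. First I would fix a smallest embedding cover $\eps\colon E\onto G$ as built in Construction~\ref{sec transfinite}, so $E=G_\mu$ and we have an inverse system with projections $\pi_{\lambda,\kappa}\colon G_\lambda\onto G_\kappa$ in which, for every non-limit $\lambda\le\mu$, $\pi_{\lambda,\lambda-1}$ is a superbasic cover of $G_{\lambda-1}$ (in particular indecomposable and proper). Since $\eps$ is an embedding cover and $\eta_0$ is an I-cover, $\eps$ dominates $\eta_0$: there is an epimorphism $\delta\colon E\onto G\times S$ with $\eta_0\circ\delta=\eps$. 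Writing $\sigma=\pr_S\circ\delta\colon E\onto S$, surjectivity of $\delta$ forces $\sigma(\Ker\eps)=S$, so $\sigma$ does \emph{not} factor through $\eps$. The whole proof then reduces to contradicting this by showing that every epimorphism $E\onto S$ \emph{does} factor through $\eps$.

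That I would prove by transfinite induction on $\lambda\le\mu$ in the form: every epimorphism $G_\lambda\onto S$ factors through $\pi_{\lambda,0}$. The case $\lambda=0$ is trivial, and for a limit ordinal $\lambda$ an epimorphism $G_\lambda\onto S$ already factors through some $\pi_{\lambda,\kappa}$ with $\kappa<\lambda$ (as $S$ is finite and $G_\lambda=\varprojlim_{\kappa<\lambda}G_\kappa$), so the inductive hypothesis applies. The successor step is the heart of the matter. Let $\theta\colon G_\lambda\onto S$ be an epimorphism and suppose it does not factor through $\pi_{\lambda,\lambda-1}$; put $N=\Ker\pi_{\lambda,\lambda-1}$, so $\theta(N)\ne1$, hence $\theta(N)=S$ by simplicity of $S$. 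Because $\pi_{\lambda,\lambda-1}$ is indecomposable, $N$ is a minimal normal subgroup of $G_\lambda$; since $N\cap\Ker\theta$ is a normal subgroup of $G_\lambda$ contained in $N$ and different from $N$ (it cannot be $N$, as $\theta(N)\ne1$), minimality gives $N\cap\Ker\theta=1$. Thus $\theta$ restricts to an isomorphism of $N$ onto $S$, and because $N$ and $\Ker\theta$ are normal in $G_\lambda$ with trivial intersection and, comparing indices (using $|N|=|S|$), with $N\cdot\Ker\theta=G_\lambda$, we get $G_\lambda=N\times\Ker\theta$ with $\Ker\theta\cong G_\lambda/N\cong G_{\lambda-1}$. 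Hence $\pi_{\lambda,\lambda-1}$ is isomorphic to the coordinate projection $G_{\lambda-1}\times S\onto G_{\lambda-1}$, which by Lemma~\ref{not superbasic} is not superbasic --- a contradiction. So $\theta$ factors through $\pi_{\lambda,\lambda-1}$, and then through $\pi_{\lambda,0}$ by induction. Taking $\lambda=\mu$ shows every epimorphism $E\onto S$ factors through $\eps$, contradicting the property of $\sigma$ above; therefore $\eta_0$ is not an I-cover.

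The only real obstacle is this successor step: one must see that the failure of $\theta$ to factor through $\pi_{\lambda,\lambda-1}$ forces $\pi_{\lambda,\lambda-1}$ to be a direct-product projection. The key observation making this go through is that the kernel of an indecomposable cover is a minimal normal subgroup, so as soon as $\theta$ does not kill it, the simplicity of $S$ forces that kernel to be mapped isomorphically onto $S$ and hence (being normal) to be a direct complement --- exactly the configuration Lemma~\ref{not superbasic} rules out for a superbasic cover. Everything else (the reduction to ``$\sigma$ does not factor through $\eps$'' and the limit/base cases) is routine.
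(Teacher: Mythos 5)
Your proof is correct and follows essentially the same route as the paper: both run a transfinite induction up the tower of Construction~\ref{sec transfinite}, whose successor steps are superbasic, and derive the contradiction at each successor step from Lemma~\ref{not superbasic}. The only difference is one of bookkeeping: where the paper phrases the successor step as compactness of the cartesian square \eqref{lambda kappa 2} (via Proposition~\ref{indecomposable cartesian square}, Remark~\ref{square trivialities}(b) and \cite[Lemma~2.15]{fund}), you work directly with epimorphisms onto $S$ and a minimal-normal-subgroup argument, which re-proves the same non-domination statement by hand.
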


\begin{proof}
Let $\eps \colon E \onto G$ be
the smallest embedding cover of $G$
of Construction~\ref{sec transfinite}.
(Thus $G_0 = G$ and $G_\mu = E$.)
For every $\lambda \le \mu$ let
$\eta_\lambda \colon G_\lambda \times S \to G_\lambda$
be the projection on $S$.
It is indecomposable.
For all
$0 \le \kappa < \lambda \le \mu$
the following diagram is cartesian
\begin{equation}\label{lambda kappa 2}
\xymatrix{
G_\lambda \times S \arr[r]_{\eta_{\lambda}}
\arr[d]_{\pi_{\lambda, \kappa} \times \id_S}
 & G_\lambda \arr[d]^{\pi_{\lambda, \kappa}}
\\
G_\kappa \times S \arr[r]^{\eta_{\kappa}} & G_\kappa 
}
\end{equation}
We have to show that
there is no
$\gamma \colon G_\mu \onto G_0 \times S$ 
such that
$\eta_0 \circ \gamma = \pi_{\mu,0}$,
that is,
by Proposition~\ref{indecomposable cartesian square},
that \eqref{lambda kappa 2}
with $\kappa = 0$ and $\lambda = \mu$,
is compact.

In fact,
we claim that
that \eqref{lambda kappa 2}
is compact
for all $\kappa < \lambda$.
In view of
Remark~\ref{square trivialities}(b)
%Remark~\ref{square trivialities}(b)
and 
\cite[Lemma~2.15]{fund}
%\label{inverse limit of compact squares}
it suffices to show
that \eqref{lambda kappa 2}
is compact for
$\kappa = \lambda - 1$.

If it is not,
there is
$\gamma_{\lambda, \kappa} \colon G_\lambda \onto G_\kappa \times S$
such that
$\eta_k \circ \gamma_{\lambda, \kappa} = \pi_{\lambda,\kappa}$.
As $\pi_{\lambda,\kappa}$ is superbasic, and hence indecomposable,
and $\eta_\kappa$ is not an isomorphism,
$\gamma_{\lambda, \kappa}$ is an isomorphism.
Thus $\eta_k$ is superbasic.
But this is a contradiction to Lemma~\ref{not superbasic}.
\end{proof}

\end{document}